\newtheorem{theorem}{Theorem}
\newtheorem{lemma}{Lemma}
\newtheorem{remark}{Remark}
\newcommand{\IGNORE}[1]{}
\newcommand{\ignore}[1]{}
\newcommand{\mbb}[1]{\mathbb{#1}}
\newcommand{\mb}[1]{\mathbf{#1}}
\newcommand{\der}[2]{\frac{\partial #1}{\partial #2}}
\newcommand{\derr}[2]{\frac{\delta #1}{\delta #2}}
\newcommand{\dder}[2]{\frac{\partial^2 #1}{\partial #2^2}}
\newcommand{\dx}{\; {\rm d} x}
\newcommand{\dy}{\; {\rm d} y}
\renewcommand{\div}{{\rm\bf div \;}}
\newcommand{\DIV}{{\cal DIV}}
\newcommand{\bDIV}{{\cal \bf DIV}}
\newcommand{\GRAD}{{\cal GRAD}}
\newcommand{\bGRAD}{{\cal \bf GRAD}}
\def\Frac{\displaystyle\frac}
\journal{Journal of Computational Physics}
\begin{document}

\begin{frontmatter}

\title{{\bf The mimetic finite difference method for the Landau-Lifshitz equation}}

\author[berk,lanl]{Eugenia Kim}
\author[lanl]{Konstantin Lipnikov}

\address[lanl]{MS-B284, Los Alamos National Laboratory, Los Alamos, NM 87544}
\address[berk]{Department of Mathematics, University of California,
Berkeley, CA 94720}
\begin{abstract}
The Landau-Lifshitz equation describes the dynamics of the magnetization inside ferromagnetic materials.
This equation is highly nonlinear and has a non-convex constraint 
(the magnitude of the magnetization is constant)
which pose interesting challenges in developing numerical methods.
We develop and analyze explicit and implicit mimetic finite difference schemes for this equation.
These schemes work on general polytopal meshes which provide enormous flexibility to 
model magnetic devices with various shapes.
A projection on the unit sphere is used to preserve the magnitude of the magnetization. 
We also provide a proof that shows the exchange energy is decreasing in certain conditions.
The developed schemes are tested on general meshes that include distorted and randomized meshes. 
The numerical experiments include a test proposed by the National Institute of Standard and Technology 
and a test showing formation of domain wall structures in a thin film.
\end{abstract}

\begin{keyword}
  micromagnetics\sep
  Landau-Lifshitz equation\sep 
  Landau-Lifshitz-Gilbert equation\sep 
  mimetic finite difference method \sep 
  polygonal meshes

%% PACS codes here, in the form: \PACS code \sep code
%% MSC codes here, in the form: \MSC code \sep code
%% or \MSC[2008] code \sep code (2000 is the default)
\end{keyword}

\end{frontmatter}

%%%%%%%%%%%%%%%%%%%%%%%%%%%%%%%%%%%%%%%%%%%%%%%%%%%%%%%%%%%%%%%%%%%%%
\section{Introduction}
\setcounter{equation}{0}

Micromagnetics studies behavior of ferromagnetic materials at sub-micrometer 
length scales \cite{fidler2000micromagnetic}.
These scales are large enough to use a continuum PDE model and are small enough 
to resolve important magnetic structures such as domain walls, vortices and skyrmions \cite{lin2013manipulation}.
The dynamics of the magnetic distribution $\bm{m}$ in a ferromagnetic material is 
governed by the Landau-Lifshitz (LL) equation.
There exist several equivalent forms of the LL equation, such as the Landau-Lifshitz-Gilbert equation,
that lead to a large family of numerical methods.
%Nevertheless, to the best our knowledge, the schemes proposed in this paper are the first 
%ones that are based on a mixed formulation of the LL equation and work on arbitrary polygonal 
%(polyhedral in 3D) meshes.

The evolution of $\bm{m}$ is driven by the effective field $\mb{h}$ which can 
be described as the functional derivative of the LL energy density with respect to 
the magnetization.
The LL energy is given by
$$
  E(\mb{m})
   = \frac{\eta}{2} \int_V |\nabla \mb{m}|^2 \dx  
   + \frac{Q}{2} \int_V (m_2^2+ m^2_3)\dx 
   - \frac{1}{2}\int_V \mb{h_s}\cdot \mb{m} \dx
   - \int_V \mb{h_e} \cdot \mb{m} \dx.
$$
The first term is the exchange energy, which favors the alignment of the magnetization along a common direction. 
The second term is the anisotropy energy, which prefers the certain orientation of the spins due 
to the crystalline lattice. 
The third term is the stray field energy, which is induced by the magnetization distribution inside the material.
The last term is the external field energy which favors the orientation of the spins along an external field. 
The exchange energy, anisotropy energy and the external field energy are local terms in that a local change in the magnetization 
affects only locally, whereas the stray field energy is the nonlocal term in that a local change in the magnetization 
affects globally \cite{cimrak2007survey}.

The LL equation has a few important properties \cite{cimrak2007survey}.
First, the magnitude of the magnetization is preserved, namely $|\mb{m}|=M$.
We can renormalize the LL equation so that $M=1$.
Secondly, the energy decreases in time, in case of a constant applied field, which is called the Lyapunov structure.
Lastly, if there is no damping, i.e. $\alpha =0$, the energy is conserved, which is called the Hamiltonian structure.

Various numerical methods have been developed for the LL equation, see e.g. review papers 
\cite{cimrak2007survey,garcia2007numerical,kruzik2006recent}.
The discretization strategies in space are discussed in the following articles :
In \cite{miltat2007numerical}, the finite difference methods based on the field and energy are presented.
The field-based finite difference method is obtained by discretizing field $\mb{h}$ itself, 
whereas in the energy based approach this field is derived from the discretized energy. 
A finite element method is employed in \cite{fidler2000micromagnetic}.
The magnetization is approximated with piecewise linear functions and 
the effective field $\mb{h}$ is obtained as the first variation of the discretized energy.
In \cite{schrefl1999finite}, the finite element method with piecewise linear functions is 
applied to the Landau-Lifshitz-Gilbert equation, which is another formulation of the LL equation.

A large family of time stepping schemes have been developed 
which conserve the magnitude of the magnetization.
The Gauss-Seidel projection method developed in \cite{wang2000numerical,wang2001gauss, garcia2003improved} uses
another formulation of the LL equation (the last equation in \eqref{eq:LLsimplified})
and treats $|\nabla \bm{m}|^2$ as the Lagrange multiplier for the pointwise constraint $|\bm{m}|=1$.
The gyromagnetic and damping terms are treated separately to overcome the difficulties associated with 
the stiffness and nonlinearity.
The resulting method is first-order accurate and unconditionally stable.
In \cite{jiang2001hysteresis}, the semi-analytic integration method is developed by analytically integrating 
the system of ODEs appearing after a spatial discretization of the LL equation.
This method is the first-order accurate 
but explicit, hence is subject to a Courant time step constraint.
The geometric integration method has been applied in \cite{krishnaprasad2001cayley}, 
and in a more general setting in \cite{lewis2003geometric},
using the Cayley transform to lift the LL equation to the Lie algebra of the three dimensional rotation group.
Unlike the semi-analytic integration methods, this method is more amenable for building numerical schemes with higher-order accuracy.
Finally, we mention the method based on the mid-point rule \cite{bertotti2001nonlinear,d2005geometrical} which 
is second-order accurate, unconditionally stable, and preserves the magnitude of the magnetization, 
as well as the Lyapunov and Hamiltonian structures of the LL equation.
Although these methods could be extended to finite element discretizations, 
to the best of our knowledge, the literature has only examples of finite difference schemes,
which are difficult to use for general domains.

The semi-discrete schemes are introduced in \cite{prohl2001computational} for 2D and 
in \cite{cimrak2005error} for 3D formulation of the LL equation and error estimates 
are derived under the assumption that there exist a strong solution.

The finite element methods for the LL equation are {typically} presented with rigorous convergence 
analysis that deals with weak solutions.
In \cite{alouges2006convergence,alouges2008new,alouges2012convergent}, the finite element method is 
developed for an equivalent formulation of the LL equation (see, formula \eqref{eq:LLG2}) which is the 
first-order accurate (in the energy norm) in both space and time and requires only 
one linear solver on each time step. 
In \cite{kritsikis2014beyond,alouges2014convergent}, the method is developed further to achieve the 
second-order accuracy in time.
In \cite{bartels2006convergence}, Bartels and Prohl considered an implicit time integration method
for the Landau-Lifshitz-Gilbert equation (see, formula \eqref{eq:LLG}), 
which is unconditionally stable, but a nonlinear solver is needed on each time step.
In \cite{cimrak2009convergence}, Cimr{\'a}k proposed a scheme for the LL equation, 
using a midpoint rule that could be easily adapted to the limiting cases,
but a nonlinear solver is needed on each time step. 

We present explicit and implicit mimetic finite difference (MFD) schemes \cite{MFD-book} 
for the LL equation.
In contrast to the existing numerical methods that use conventional spatial discretizations with various time 
stepping strategies, we deliver a new spatial discretization which has a number of unique properties.
First, it works on arbitrary polytopal (polygonal in 2D and polyhedral in 3D) meshes including locally refined 
meshes with degenerate cells.
For the same mesh resolution, polytopal meshes need fewer cells to cover the domain than simplicial meshes, which leads 
to fewer number of unknowns and a more efficient scheme.
Elegant treatment of degenerate cells that appear in adaptive mesh refinement/coarsening algorithms allows us to track 
accurate dynamics of domain walls.
Although mesh adaptation is beyond the focus of this paper, we illustrate the underlying idea with one numerical experiment.
Secondly, we use a mixed formulation of the LL equation that simplifies numerical control of the constraint $|\mb{m}|=1$.
To the best of our knowledge, the schemes proposed in this paper are the first 
ones that are based on a mixed formulation of the LL equation. 
Thirdly, the MFD could be applied to problems posed in general domains, like the finite element methods,
which is a key advantage compared to the finite difference methods. 
To the best of our knowledge, the time stepping schemes such as GSPM \cite{wang2000numerical,wang2001gauss, garcia2003improved}
 and other geometric methods \cite{jiang2001hysteresis,krishnaprasad2001cayley, lewis2003geometric, bertotti2001nonlinear,d2005geometrical}
have only been tested on finite difference stencils.
Fourthly, we prove that the exchange energy decreases on polygonal meshes under certain conditions,
which were not addressed in GSPM \cite{wang2000numerical,wang2001gauss, garcia2003improved}
and other methods \cite{jiang2001hysteresis,krishnaprasad2001cayley, lewis2003geometric, bertotti2001nonlinear}.
Fifthly, our implicit scheme has the similar complexity as the algorithms in Alouges  \cite{alouges2006convergence,alouges2008new,alouges2012convergent}, 
in that we need to solve a linear system for each time step.
Finally, compared to the methods in \cite{alouges2006convergence,alouges2008new,alouges2012convergent,bartels2006convergence},
this method is developed for the LL equation, which makes it more suitable to apply to the limiting cases, 
which is important for physical simulations.

The MFD method was originally designed to preserve or mimic important mathematical and physical
properties of continuum PDEs in discrete schemes on unstructured polytopal meshes.
It has been successfully employed for solving diffusion, convection-diffusion, electromagnetic, 
and linear elasticity problems and for modeling various fluid flows.
The original MFD method is a low-order method, but miscellaneous
approaches were developed towards higher-order methods.
We refer to book \cite{MFD-book} and review paper \cite{Lipnikov-Manzini-Shashkov:2014} 
for extensive review of mimetic schemes.
This the first application of the mimetic discretization technology
to a geometric dispersive partial differential equation. 

The mimetic discretization framework combines rich tools of a finite element analysis with 
the flexibility of finite volume meshes. 
Here, we consider explicit and implicit time integration schemes and
demonstrate the flexibility of this framework with various numerical experiments.
We perform stability analysis on polygonal meshes but defer rigorous convergence analysis for future work.
%In the proposed numerical algorithm, at most one linear system has to be solved on each time step.

The computation of the stray field term is the most time consuming part of each micromagnetic simulation
due to its nonlocal nature. 
Numerous numerical methods for the stray field calculation are described and studied in \cite{abert2013numerical}. 
They can be divided into two groups.
The first group includes methods that solve a PDE in $\mathbb{R}^d$ for the potential field.
Since this PDE is posed in an infinite domain, hybrid numerical methods are typically used.
The finite element and boundary element methods are used in \cite{fredkin1990hybrid, garcia2006adaptive}, 
the finite element method and the shell transformation are used in \cite{brunotte1992finite}.
The computation cost is reduced by using multigrid preconditioners \cite{tsukerman1998multigrid} and
$\mathcal{H}$-matrix approximation \cite{popovic2005applications}.
The second group includes methods based on direct evaluation of the integral with a nonlocal kernel, e.g.
the fast Fourier transform 
\cite{long2006fast, abert2012fast, yuan1992fast, garcía2003accurate}, the fast multipole method \cite{blue1991using},
nonuniform grid method \cite{livshitz2009nonuniform}, and the tensor grid method \cite{exl2012fast}.
The best numerical methods reach complexity between $N$ and $N \log N$, where $N$ is the number of
unknowns.

The paper is organized as follows. 
In Section~2, we describe the PDE formulation of the LL equation.
In Section~3, we present the MFD method.
In Subection~\ref{sec:stray}, the computation of the stray field is reviewed based on \cite{wang2006simulations}. 
In Section~\ref{sec:stability}, the stability of the explicit and implicit schemes is analyzed. 
Finally, in Section~\ref{sec:numerical}, we verify and validate the proposed schemes using analytical 
solutions, the NIST $\mu$mag standard problem $4$, and the test showing formation of domain wall 
structures in a thin film.

%%%%%%%%%%%%%%%%%%%%%%%%%%%%%%%%%%%
\section{Problem formulation}
\setcounter{equation}{0}

The dynamics of the magnetic distribution in a ferromagnetic material occupying a region 
$\Omega \subset \mathbb{R}^d$ where $d=2$, or $3$, is governed by the LL equation,
see, e.g. \eqref{eq:LLnist}.
After its normalization, we obtain the following PDE for the magnetization 
$\mb{m} : \Omega \times [0,T]\to \mathbb{R}^3$:
\begin{equation}\label{eq:LL}
  \der{\mb{m}}{t} = - \mb{m} \times \mb{h} - \alpha \mb{m} \times ( \mb{m} \times \mb{h}),
\end{equation}
where $\alpha$ is the dimensionless damping parameter and $\mb{h}$ is the effective field.
The first term on the right hand side is the gyromagnetic term and the second term is the damping term.
The problem is closed by imposing the Neumann or Dirichlet boundary conditions and initial
conditions.

It is immediately to see that $|\mb{m}| $ is constant in time, so we assume that $|\mb{m}|=1$.
The effective field is defined as the functional derivative of the LL energy density:
\begin{equation}\label{eq:LLenergy}
  \mb{h}(\mb{m}) = -\derr{E(\mb{m})}{\mb{m}}
   = \eta \triangle \mb{m} - Q(m_2 e_2 + m_3 e_3) +\mb{h_s}(\mb{m})+\mb{h_e},
\end{equation}
where $\eta$ is the exchange constant, $Q$ is an anisotropy constant, $\mb{h}_s$ is the stray field, 
and $\mb{h}_e$ is an external field.
%and the energy density is given by $E(\mb{m}) = \int _\Omega \mc{E}(\mb{m})\dx$.
Let us collect the low-order terms (with respect to $\Delta \mb{m}$) in a single variable
\begin{equation}\label{eq:low}
  \mb{\bar h}(\mb{m}) = - Q(m_2 e_2 + m_3 e_3) +\mb{h_s}(\mb{m})+\mb{h_e}.
\end{equation}
These terms are usually regarded as the low-order terms when considering mathematical properties 
such as the existence and regularity of the solution \cite{alouges2006convergence}.
Since constant $\eta$ is not critical for the description of the mimetic scheme, we set $\eta=1$.

The stray field is given by $\mb{h_s}=- \nabla \phi$. 
Let $\Omega^c$ denote the complement of $\Omega$.
Then, the potential $\phi$ satisfies (see \cite{garcia2007numerical} for more detail):
\begin{equation}\label{eq:stray}
\begin{aligned}
  \triangle \phi &= 
  \begin{cases}
    \nabla \cdot \mb{m} &\text{ in } \Omega, \\
    0 &\text{ on }  \Omega^c, \\
  \end{cases}\\[0.5ex]
  [\phi]_{\partial \Omega} &= 0, \\[0.5ex]
  [\nabla \phi \cdot \bm{n}]_{\partial \Omega} &= - \mb{m} \cdot \mb{n},
\end{aligned}
\end{equation}
where $[v]_{\partial \Omega}$ denotes a jump of function $v$ across the domain boundary,
and $\bm{n}$ is the unit normal vector.
Hence, the stray field $\mb{h_s}$ is given by
\begin{equation}\label{eq:hs}
  \mb{h_s} (x) 
  =- \frac{1}{4\pi} \nabla \int_\Omega \nabla\left(\frac{1}{|x-y|}\right) \cdot \mb{m}(y) \;{\rm d}y.
  %= \frac{1}{4\pi} \nabla \{\int_\Omega \frac{\nabla \cdot m(y) }{|x-y|} dy 
  % -\int_{\partial \Omega} \frac{m(y) \cdot \vec n}{|x-y|} dS(y) \}
\end{equation}

Note, that there are several equivalent forms of the LL equation, e.g. the Landau-Lifshitz-Gilbert equation:
\begin{equation}\label{eq:LLG}
  \der{\mb{m}}{t} - \alpha \mb{m} \times \der{\mb{m}}{t} 
  = -(1+  \alpha^2)  ( \mb{m} \times \mb{h}).   
\end{equation}
Another equivalent form, used in \cite{alouges2006convergence} to develop a numerical scheme, is
\begin{equation}\label{eq:LLG2}
  \alpha \der{\mb{m}}{t} + \mb{m} \times \der{\mb{m}}{t}
  = (1+  \alpha^2)  (\mb{h} - (\mb{h} \cdot \mb{m} ) \mb{m}).
\end{equation}
In a special case of $\mb{\bar h} = 0$, we have $\mb{h} = \triangle \mb{m}$.
Then, the simplified LL equation (\ref{eq:LL}) has more equivalent forms:
\begin{equation}\label{eq:LLsimplified}
\begin{aligned}
  \der{\mb{m}}{t} =& - \mb{m} \times \Delta\mb{ m} - \alpha \mb{m} \times ( \mb{m} \times\Delta \mb{ m}) \\[0.0ex]
  = & -\mb{m} \times \Delta\mb{ m} + \alpha \Delta \mb{ m} - \alpha( \mb{m} \cdot \Delta \mb{ m})   \mb{m} \\[1ex]
  = & -\mb{m} \times \Delta\mb{ m} + \alpha \Delta \mb{ m} + \alpha|\nabla  \mb{m}|^2 \mb{m}.
\end{aligned}
\end{equation}
Here, we used the vector identity 
$\mb{a}\times(\mb{b}\times\mb{c}) = (\mb{a}\cdot \mb{c})\; \mb{b} - (\mb{a} \cdot \mb{b})\;\mb{c}$,
$|\mb{m}|=1$, and 
$$
  \mb{m} \cdot \Frac{\partial\mb{m}}{\partial u} = 0,
  \quad u \in \{x,\,y,\,z\}.
$$
If we consider only the damping term on the right-hand side of (\ref{eq:LLsimplified}) we obtain $\der{\mb{m}}{t} =- \alpha \mb{m} \times ( \mb{m} \times \triangle \mb{m})$,
which is called the harmonic map heat flow into $\mathbb{S}^2$ \cite{gustafson2010asymptotic}. 
If we consider only the gyromagnetic term on the right-hand side of (\ref{eq:LLsimplified}), we obtain 
$\der{\mb{m}}{t} = - \mb{m} \times \triangle \mb{m}$, which is called the Schr\"{o}dinger map, 
a geometric generalization of the linear Schr\"{o}dinger equation \cite{gustafson2010asymptotic}. 
By using the LL equation to design a numerical scheme, instead of the Landau-Lifshitz-Gilbert 
equation (\ref{eq:LLG}) or (\ref{eq:LLG2}), we can immediately apply it to the harmonic map heat flow 
and the Schr\"odinger map.

%%%%%%%%%%%%%%%%%%%%%%%%%%%%%%%%%%%%%%%%%%%%%%%%%%%%%%%%%%%%%%%%%%%%%
\section{Mimetic discretization of the Landau-Lifshitz equation}
\label{sec:mimetic}
\setcounter{equation}{0}

In this section, we apply the mimetic finite difference (MFD) method to the LL equation.
Let us introduce the magnetic flux tensor $\mb{p} = -\mb{\nabla} \mb{m}$.
Then,
$$
  \frac{\partial \mb{m}}{\partial t} 
  = \mb{m} \times \div \mb{p} + \alpha \mb{m} \times(\mb{m} \times  {\rm\bf div \;} \mb{p})
  + \mb{f}(\mb{m}).
$$
where
\begin{equation}\label{eq:extra}
  \mb{f}(\mb{m}) 
  =- \mb{m} \times \mb{\bar h}( \mb{m})- \alpha \mb{m} \times(\mb{m} \times \mb{\bar h}(\mb{m}))
\end{equation}
corresponds to the low-order terms (\ref{eq:low}).
The MFD method solves simultaneously for both $\mb{m}$ and $\mb{p}$.
It mimics duality of the divergence and gradient operators in the discrete setting.
This property is used in the stability analysis.
The MFD method works on arbitrary polygonal or polyhedral mesh which provides
enormous flexibility for modeling non-rectangular mechanical devices.

Let the computational domain $\Omega$ be decomposed into $N_E$ non-overlapping polygonal 
or polyhedral elements $E$ with the maximum diameter $h$. 
Let $N_F$ denote the total number of mesh edges (faces in 3D).
We use $|E|$ to denote the area (volume in 3D) of $E$.
Similarly, $|f|$ denotes the length of mesh edge $f$ (area of mesh face $f$ in 3D).
Let $\bm{n}_E$ be the unit vector normal to $\partial E$.

To define degrees of freedom for the mimetic scheme, we assume that $\mb{m} \in {\cal Q}$ 
and $\mb{p} \in {\cal F}$, where ${\cal Q} = (L^2(\Omega))^3$ and
\begin{equation}
  {\cal F} = \{\mb{p} \;|\; \mb{p} \in (L^s(\Omega))^{d\times 3},\ s>2,\ \div \mb{p} \in (L^2(\Omega))^3\}.
\end{equation}

%%%%%%%%%%%%%%%%%%%%%%%%%%%%%%%%%%%%%%%%%%%%%%%%%%%%%%%%%%%%%%%%%%%%%
\subsection{Global mimetic formulation}

Let $\mb{m} = (m_x,\,m_y,\,m_z)$.
The degrees of freedom for each component of the magnetization 
are associated with elements $E$ and denoted as $m_{x,E}$, $m_{y,E}$, and $m_{z,E}$.
They represent the mean values of $\mb{m}$:
$$
  m_{u,E} = \frac{1}{|E|} \int_E m_u \dx,
  \quad u \in \{x,\,y,\,z\}.
$$
Thus, control of the magnetization magnitude reduces to simple cell-based constraints $|\mb{m}_E| = 1$.
Consider the vector space 
\begin{equation} \label{eq:Qh}
  \mathcal{Q}^h = \left\{ m^h_u\colon\  m^h_u = (m_{u, E_1}, \cdots, m_{u,E_{N_E}} )^T \right\}.
\end{equation}
The dimension of this space is equal to the number of mesh elements.
Then, the discrete magnetization $\mb{m}^h = (m_x^h,\,m_y^h,\,m_z^h)$ with $m_u^h \in {\cal Q}^h$.

Let $\mb{p} = (\mb{p}_x,\,\mb{p}_y,\,\mb{p}_z)$.
The degrees of freedom for each component of the magnetic flux are associated with
mesh edges $f$ (faces in 3D) and denoted as $p_{x,E,f}$, $p_{y,E,f}$ and $p_{z,E,f}$.
They represent the mean normal flux across edge $f$ of element $E$:
$$
  p_{u,E,f} = \frac{1}{|f|} \int_f \mb{p}_u \cdot \bm{n}_E \dx,
  \quad u \in \{x,\,y,\,z\}.
$$
We need a few notations for local groups of degrees of freedom.
Let $p_{u,E}$ be the vector of degrees of freedom associated with element $E$ and
$\mb{p}_E = (p_{x,E},\,p_{y,E},\,p_{z,E})^T$.
Consider the vector space 
\begin{equation} \label{eq:Fh}
  \mathcal{F}^h = \left\{ p^h_u\colon\  p^h_u = (p_{u, E_1}, \cdots, p_{u,E_{N_E}} )^T \right\}.
\end{equation}
The dimension of this space is equal to twice the number of internal mesh edges (faces in 3D)
plus the number of boundary edges.
Then, the discrete magnetic flux $\mb{p}^h = (p_x^h,\,p_y^h,\,p_z^h)$ with $p_u^h \in {\cal F}^h$.

In the global mimetic formulation we consider the reduced space for discrete fluxes still 
denoted by $\mathcal{F}^h$.
Let $f$ be an internal edge shared by two elements $E_1$ and $E_2$. 
The reduced space satisfies the flux continuity constraints
\begin{equation}\label{eq:continuity}
  p_{u,E_1,f} + p_{u,E_2,f} = 0
\end{equation}
for all internal edges. The reduced space allows us to work with exterior normal
vectors $\mb{n}_E$ which simplifies some formulas.
In a computer code, the flux continuity constraints are used to reduce the problem size.

The degrees of freedom were chosen to define  the discrete divergence operator easily.
For each component $u$ of the magnetic flux, the divergence theorem for element $E$ leads to
the discrete divergence operator:
\begin{equation} \label{eq:div}
  \DIV_E \;p_{u,E} = \frac{1}{|E|}\sum_{f \in \partial E} |f|\, p_{u,E,f}.
\end{equation} 
Let
$$ 
  \bDIV_E \;\mb{p}_{E} = (\DIV_E \;p_{x,E},\, \DIV_E \;p_{y,E},\, \DIV_E \;p_{z,E})^T.
$$

The MFD method builds the discrete gradient operator from the discrete duality principle.
Recall that in the continuum setting, under the homogeneous boundary conditions, we have the Green formula:
$$
  \int_\Omega \mb{m} \cdot \div \mb{p} \dx
  = -\int_\Omega \nabla \mb{m} \colon \mb{p} \dx.
$$
It states that the gradient operator is negatively adjoint to the divergence operator
with respect to the $L^2$-inner products.
In the MFD method, we mimic this formula.
First, we replace the $L^2$-inner products by discrete inner products in spaces of the
degrees of freedom.
In the space of discrete magnetizations, we define the following inner product:
\begin{equation}\label{innerQ}
  \big[\mb{m}^h,\,\mb{w}^h\big]_{{\cal Q}} 
  = \sum_{u \in \{x,y,z\}} \big[{m^h_u},\, {w^h_u}\big]_{{\cal Q}},\quad
  \big[m^h_u,\, w^h_u\big]_{{\cal Q}} 
  = \sum_{E \in \Omega_h} \big[m^h_{u,E},\, w^h_{u,E}\big]_{{\cal Q},E}.
\end{equation}
Since we have only one degree of freedom per mesh element, we have
$\big[m^h_{u,E},\, w^h_{u,E}\big]_{{\cal Q},E} = |E|\,m_{u,E}\, w_{u,E}$.
In the space of discrete fluxes, we define the following inner product:
\begin{equation}\label{innerF}
  \big[\mb{p}^h,\,\mb{q}^h\big]_{{\cal F}} 
  = \sum_{u \in \{x,y,z\}} \big[{p^h_u},\, {q^h_u}\big]_{{\cal F}},\quad
  \big[p^h_u,\, q^h_u\big]_{{\cal F}} 
  = \sum_{E \in \Omega_h} \big[p_{u,E},\, q_{u,E}\big]_{{\cal F},E},
\end{equation}
where $\big[\cdot,\, \cdot\big]_{{\cal F},E}$ is an element-based inner product that 
requires special construction discussed later.
The mimetic gradient operator is defined implicitly from the discrete duality property:
$$
  \big[\mb{m}^h,\,\bDIV\, \mb{p}^h\big]_{{\cal Q}}
  = -\big[\bGRAD\,\mb{m}^h,\,\mb{p}^h\big]_{{\cal F}} \qquad \forall \mb{m}^h, \mb{p}^h.
$$
Due to basic properties of the inner products, the mimetic gradient is defined uniquely.

The semi-discrete mimetic formulation is to find $\mb{m}^h$ and $\mb{p}^h$ such that
$$
\begin{array}{rcl}
  \mb{p}^h &\!\!=\!\!& -\bGRAD\, \mb{m}^h,\\[1ex]
  \Frac{\partial \mb{m}^h}{\partial t} + \alpha\, \bDIV\, \mb{p}^h 
  &\!\!=\!\!& \mb{m}^h \times \bDIV \mb{p}^h + \alpha\, (\mb{m}^h \cdot \bDIV\, \mb{p}^h)\,\mb{m}^h
  + \mb{f}^h(\mb{m}^h)
\end{array}
$$
where $\mb{f}^h(\mb{m}^h)$ is a discretization of equation (\ref{eq:extra}).
Non-homogeneous boundary conditions can be incorporated in the MFD framework using
the approach described in \cite{Hyman-Shashkov:1998}.
In the next section, we describe another way to include boundary conditions.

%%%%%%%%%%%%%%%%%%%%%%%%%%%%%%%%%%%%%%%%%%%%%%%%%%%%%%%%%%%%%%%%%%%%%
\subsection{Local mimetic formulation}

For the local mimetic formulation, the starting point is the local
Green formula:
\begin{equation}
  \int_E \mb{m} \cdot \div \mb{p} \dx 
   = -\int_E \nabla \mb{m} \colon \mb{p} \dx 
     +\int_{\partial E} (\mb{p} \cdot \mb{n}) \cdot \mb{m} \dx
\end{equation}
To discretize the last integral, we use additional degrees of freedom for magnetization 
on mesh edges $f$ that we denote by $\mb{m}_f = (m_{x,f},\, m_{y,f},\, m_{z,f})$. 
Let $\widetilde{\mb{m}}_E$ be the vector of the additional degrees of freedom associated with 
element $E$.
The total number of these degrees of freedom is equal to the number of mesh edges, $N_F$,
times three.
The local mimetic gradient operator is defined implicitly from the discrete duality property:
\begin{equation}\label{localGreen-h}
  \big[\mb{m}_E,\,\bDIV_E\, \mb{p}_E\big]_{{\cal Q},E}
  = -\big[\bGRAD_E\begin{pmatrix}\mb{m}_E\\ \widetilde{\mb{m}}_E \end{pmatrix},\,\mb{p}_E\big]_{{\cal F},E}
    +\sum\limits_{f \in \partial E} |f|\, \mb{p}_f\cdot \mb{m}_f 
\end{equation}
for all $\mb{m}_E$, $\widetilde{\mb{m}}_E$, and $\mb{p}_E$. 
Due to basic properties of the inner products, the local mimetic gradient operator is defined uniquely.
Moreover, by our assumption, all inner products are sums of inner products for
vector components.
This allows us to define a component-wise gradient operator via
$$
  \big[m_{u,E},\,\DIV_E\, p_{u,E}\big]_{{\cal Q},E}
  = -\big[\GRAD_E\begin{pmatrix}m_{u,E}\\ \widetilde{m}_{u,E} \end{pmatrix},\,p_{u,E}\big]_{{\cal F},E}
    +\sum\limits_{f \in \partial E} |f|\, p_{u,E,f}\cdot m_{u,f}.
$$
The last formula can be simplified if we define inner product matrices 
$\mathbb{M}_{{\cal Q},E}$ and $\mathbb{M}_{{\cal F},E}$:
$$
  \big[m_{u,E},\,w_{u,E}\big]_{{\cal Q},E}
  = m_{u,E}\, \mathbb{M}_{{\cal Q},E}\, w_{u,E},
  \qquad
  \big[p_{u,E},\, q_{u,E}\big]_{{\cal F},E}
  = p_{u,E}\, \mathbb{M}_{{\cal F},E}\, q_{u,E}.
$$
The first matrix is simply the number $|E|$.
The construction of the second matrix is more involved and is discussed in subsection \ref{subsec:W}.
If the boundary of element $E$ has $n$ faces $f_i$, we obtain the following explicit formula
for the mimetic gradient:
\begin{equation} \label{eq:pW}
  \GRAD_E
  \begin{pmatrix}m_{u,E}\\ \widetilde{m}_{u,E} \end{pmatrix}
  = 
  \mathbb{M}_{{\cal F},E}^{-1}
  \begin{pmatrix}
   |f_1|\, (m_{u,f_1}-m_{u,E})\\ 
   |f_2|\, (m_{u,f_2}-m_{u,E})\\ 
   \vdots \\ 
   |f_n|\, (m_{u,f_n}-m_{u,E})
  \end{pmatrix}.
\end{equation}

The semi-discrete mimetic formulation is to find $\mb{m}_E$, $\widetilde{\mb{m}}_E$, and 
$\mb{p}_E$, for $E \in \Omega^h$, such that
\begin{equation}\label{eq:mixed}
\begin{array}{rcl}
  \mb{p}_E &\!\!=\!\!& -\bGRAD_E\begin{pmatrix}\mb{m}_E\\ \widetilde{\mb{m}}_E \end{pmatrix}, \\[1ex]
  \Frac{\partial \mb{m}_E}{\partial t} + \alpha\, \bDIV_E\, \mb{p}_E
  &\!\!=\!\!& \mb{m}_E \times \bDIV_E \mb{p}_E + \alpha\, (\mb{m}_E \cdot \bDIV_E\, \mb{p}_E)\,\mb{m}_E
  + \mb{f}_E(\mb{m}_E),
\end{array}
\end{equation}
subject to continuity \eqref{eq:continuity}, boundary, and initial conditions.
The Dirichlet boundary conditions are imposed by prescribing given values to the 
auxiliary magnetization unknowns on mesh edges.
The Neumann boundary conditions are imposed by setting magnetic fluxes to given
values.

The local mimetic formulation implies the global one.
Equivalence of the local and global gradient operators can be shown by summing 
up equations \eqref{localGreen-h} and observing that interface term are
canceled out due to flux continuity conditions \eqref{eq:continuity}.
The local formulation is more convenient for a computer implementation.
The global formulation is more convenient for convergence analysis.

%%%%%%%%%%%%%%%%%%%%%%%%%%%%%%%%%%%%%%%%%%%%%%%%%%%%%%%%%%%%%%%%%%%%%%
\subsection{Implicit-explicit time discretization}
To discretize time derivative in \eqref{eq:mixed}, we consider the $\theta$-scheme.
Let $k$ denote the time step and superscript $j$ denote the time moment $t^j = j\,k$.
Then, the $\theta$-scheme is
\begin{equation}\label{eq:dmdt}
\begin{aligned} 
  \mb{p}_E^{j+\theta} 
  &= -\bGRAD_E\begin{pmatrix}\mb{m}_E^{j+\theta}\\ \widetilde{\mb{m}}_E^{j+\theta} \end{pmatrix}, \\
  \frac{\mb{m}_E^{j+1} - \mb{m}_E^j}{k} 
  + \alpha \bDIV_E\;p^{j+\theta}_E
  &= \mb{m}_{E}^j \times \bDIV_{E}\;\mb{p}_E^{j+\theta}
   +\alpha\, (\mb{m}_E^j \cdot (\bDIV_E\; \mb{p}^{j+\theta}_E)) \mb{m}_E^j
   +\mb{f}^j_E(\mb{m}^j_E).
\end{aligned}
\end{equation}
Note that $\theta =0$ for the explicit scheme and $\theta =1$ for the implicit scheme.
The later scheme is not fully implicit and a single linear solver is required to
advance the solution.
Hereafter, we consider only $\theta = 1$.
Hereafter, we drop superscript `$h$' and write $\mb{m}^j$ instead of $\mb{m}^{h,j}$.
Inserting the first equation in the second one and in the flux continuity conditions (multiplied
by $|f|$ for better symmetry),
and imposing boundary and initial conditions, we obtain a system of algebraic equations
for cell-based and edge-based magnetizations:
\begin{equation}\label{eq:linearsystem}
  \mbb A^j 
  \begin{pmatrix} \mb{m}^{j+1}\\[1ex] \widetilde{\mb{m}}^{j+1} \end{pmatrix}
  =
  \begin{pmatrix} \mb{b}_1^j \\[1ex] \mb{b}_2^j \end{pmatrix}.
\end{equation}
The right-hand side vector depends on $\mb{m}^j$, external field, stray field,
and boundary conditions.
The stiffness matrix $\mathbb A^j$ is sparse and could be written as the sum of local matrices,
\begin{equation}
  \mbb A^j = \sum_{E \in \Omega^h} \mbb N_E\, \mbb A^j_E\, \mbb N_E^T.
\end{equation}
where $\mbb N_E$ is the conventional assembly matrix which maps local indices to global indices.
We present detailed structure of the local matrix which is important for analyzing its
structural properties and selecting optimal solver.
Let us consider an element $E$ with $n$ edges located strictly inside the computational domain.
The corresponding local matrix has size $3 (n+1) \times 3(n+1)$ and the following block structure:
\begin{equation}
  \mbb{A}_E^j =
  \begin{pmatrix}
    \mbb{A}^{EE} & \mbb{A}^{Ef}\\[0.5ex]
    \mbb{A}^{fE} & \mbb{A}^{ff}
  \end{pmatrix}
\end{equation}
where the first block row corresponds to three cell-based magnetizations.
Let $\mbb{I}$ denote a generic identity matrix, $\mbb{C}_E$ be the $n\times n$ diagonal matrix,
$\mbb{C}_E = {\rm diag}\{|f_1|,\ldots,|f_n|\}$, and $\mb{e}=(1,1,\ldots,1)^T$.
Using the tensor-product notation to represent matrices, we have
\begin{equation}
\begin{array}{rcll}
  \mbb{A}^{EE} &\!\!=\!\!& \mbb{I}
    +\Frac{k}{|E|} (\mb{e}^T \mbb{C}_E\, \mbb{M}_{{\cal F},E}^{-1}\,\mbb{C}_E\, \mb{e})\, \mbb{\hat  A}^j,
  \qquad
  &\mbb{A}^{Ef} = -\Frac{k}{|E|} \big(\mb{e}^T \mbb{C}_E\, \mbb{M}_{{\cal F},E}^{-1}\, \mbb{C}_E\big)\otimes\mbb{\hat A}^j\\[2ex]
  \mbb{A}^{fE} &\!\!=\!\!& -\big(\mbb{C}_E\,\mbb{M}_{{\cal F},E}^{-1}\, \mbb{C}_E\,\mb{e}\big) \otimes \mbb{I},
  \qquad
  &\mbb{A}^{ff} = \big(\mbb{C}_E\,\mbb{M}_{{\cal F},E}^{-1}\, \mbb{C}_E\big) \otimes \mbb{I},
\end{array}
\end{equation}
and
\begin{equation}
  \mbb{\hat A}^j = \alpha\,\mbb{I} - \alpha\, \mb{m}^j_E \, (\mb{m}^j_E)^T 
  - \begin{pmatrix}
    0         & - m_{z,E}^j  & m_{y,E}^j \\[0.5ex]
    m_{z,E}^j &  0          &- m_{x,E}^j \\[0.5ex]
   - m_{y,E}^j & m_{x,E}^j  &  0
\end{pmatrix}.
\end{equation}
 
To preserve the geometric constraint $|\mb{m}_E^{j+1}| = 1$, we need to modify 
the solution by projecting it onto the unit sphere.
This leads to the following algorithm.

\medskip
\medskip
\noindent{\bf Algorithm 1.} For a given final time $T > 0$, set $J = [{T \over k}]$.
\begin{enumerate}
\item Set an initial discrete magnetization $\mb{m}^{0}$ at the centers of mesh elements.
\item For $j=0, \dots, J-1$,
  \begin{enumerate}
  \item Form and solve the system \eqref{eq:linearsystem}. 
    Let $\begin{pmatrix} \hat{\mb{m}}^{j+1}\\ \widetilde{\mb{m}}^{j+1}\end{pmatrix}$ denote the solution.
  \item Renormalize the element-centered magnetizations:
    $$
      \mb{m}^{j+1}_E \, :=\, \Frac{\hat{\mb{m}}^{j+1}_E}{|\hat{\mb{m}}^{j+1}_E|}, \quad 
      \quad \forall  E \in \Omega^h.
    $$
  \end{enumerate}
\end{enumerate}
 
Finally, note that the scheme require only matrix $\mbb{M}_{{\cal F},E}^{-1}$.
In the next section, we show that it could be calculated directly for the same cost as 
matrix $\mbb{M}_{{\cal F},E}$.

%%%%%%%%%%%%%%%%%%%%%%%%%%%%%%%%%%%%%%%%%%%%%%%%%%%%%%%%%%%%%%%%%%%%%%
\subsection{Construction of matrices $\mbb{M}_{{\cal F},E}$ and  $\mbb{M}_{{\cal F},E}^{-1}$}
\label{subsec:W}
The construction of the matrices $\mbb{M}_{{\cal F},E}$ and  $\mbb{M}_{{\cal F},E}^{-1}$ is 
based on the consistency and stability conditions discussed in detail in \cite{brezzi2005family}.
Here, we briefly summarize the underlying ideas.
The inner product matrix $\mbb{M}_{{\cal F},E}$ must represent an accurate quadrature
for the continuum $L^2$ inner product of two functions.
More specifically, we require that the following consistency condition (also known as the 
exactness property) holds:
$$
  \big(p_{u,E}^0\big)^T\,\mbb{M}_{{\cal F},E}\,q_{u,E}
  = [p_{u,E}^0,\,q_{u,E}]_{{\cal F},E}
  = \int_E \mb{p}^0_u \cdot \mb{q}_u \dx
$$
for any constant vector function $\mb{p}^0_u$ and any sufficiently smooth function $\mb{q}_u$ 
with constant divergence and constant normal components on element edges. 
Here $p_{u,E}^0$ and $q_{u,E}$ are vectors of the degrees of freedom for functions 
$\mb{p}^0_u$ and $\mb{q}_u$, respectively.
Writing $\mb{p}^0_u$ as the gradient of a linear function $m_u^1$ with mean zero value on $E$
and integrating by parts, we obtain:
$$
  \int_E \mb{p}^0_u \cdot \mb{q}_u \dx 
  = -\int_E  m_u^1\, {\rm div}\, \mb{q}_u \dx 
    +\int_{\partial E} (\mb{q}_u \cdot \mb{n}_E)\, m_u^1 \dx
  = \sum\limits_{f \in \partial E} q_{u,E,f} \int_f m_u^1 \dx.
$$
For a given $\mb{p}^0_u$, this formula uses the degrees of freedom and 
the computable edge integrals.
Taking three linearly independent constant vector functions (such as $\mb{p}^0_u = (1,0,0)^T$),
inserting them in the exactness property, and using the definition of the degrees of freedom,
we obtain the matrix equation
\begin{equation}\label{eq:mnr}
  \mathbb{M}_{{\cal F},E}\,\mathbb{N} = \mathbb{R},
\end{equation}
where
$$
\mathbb{N}= \begin{pmatrix}
  n_{x_{f_1}} & n_{y_{f_1}} & n_{z_{f_1}}\\
  n_{x_{f_2}} & n_{y_{f_2}} & n_{z_{f_2}}\\
  \vdots  & \vdots & \vdots \\
  n_{x_{f_m}} & n_{y_{f_m}} & n_{z_{f_m}}
\end{pmatrix},
\quad
\mathbb{R}= \begin{pmatrix}
  |f_1|(x_{f_1}-x_E) & |f_1|(y_{f_1}-y_E) & |f_1|(z_{f_1}-z_E) \\
  |f_2|(x_{f_2}-x_E) & |f_2|(y_{f_2}-y_E) & |f_2|(z_{f_2}-z_E) \\
  \vdots  & \vdots & \vdots \\
  |f_m|(x_{f_m}-x_E) & |f_m|(y_{f_m}-y_E) & |f_m|(z_{f_m}-z_E)
\end{pmatrix}.
$$
Here $n_{u_f}$ is the $u$-th component of the outward unit normal vector to edge $f$ of $E$,
$(x_f,\,y_f,\,z_f)$ is the edge centroid, and $(x_E,\,y_E,\,z_E)$ is the element centroid.
It was shown in \cite{brezzi2005family} that the symmetric positive definite matrix
\begin{equation}\label{eq:MFE}
  \mathbb M_{{\cal F},E} 
  = \frac{1}{|E|} \mathbb{R} \mathbb{R}^T
  + \gamma\, (\mbb{I} - \mathbb{N}\, (\mathbb{N}^T\, \mathbb{N})^{-1}\, \mathbb{N}^T),
  \qquad \gamma > 0,
\end{equation}
is a solution to  matrix equation \eqref{eq:mnr}.
The matrix $\mathbb M_{{\cal F},E}$ looks like a mass matrix when both terms in \eqref{eq:MFE} 
are scaled equally with respect to local mesh size.
Since the second term includes the orthogonal projection, we can simply set 
$\gamma = |E|$ or $\gamma = \frac{1}{2\,|E|} \text{ trace} (\mathbb{R}\, \mathbb{R}^T) $.

The general solution to \eqref{eq:mnr} is
\begin{equation}\label{eq:full_family}
  \mathbb M_{{\cal F},E} 
  = \frac{1}{|E|} \mathbb{R} \mathbb{R}^T
  + (\mbb{I} - \mathbb{N}\, (\mathbb{N}^T\, \mathbb{N})^{-1}\, \mathbb{N}^T)
  \,\mathbb{G}\, (\mbb{I} - \mathbb{N}\, (\mathbb{N}^T\, \mathbb{N})^{-1}\, \mathbb{N}^T)
\end{equation}
where $\mathbb{G}$ is a symmetric positive definite matrix.
Mathematically, the selection of this matrix must satisfy the {\it stability condition}:
There exists two positive constants $c_0, C_0>0$, independent of $h$ and $E$, such that, 
for every $q_u \in \mathcal{F}^h_E$ and every $E \in \Omega^h$, we have
\begin{equation}\label{eq:stability}
  c_0\, |E|\,q_{u,E}^T\, q_{u,E} 
  \le q_{u,E}^T\, \mathbb{M}_{{\cal F},E}\,q_{u,E}
  = [q_{u,E},\,q_{u,E}]_{{\cal F},E}
  \le C_0\, |E|\, q_{u,E}^T\, q_{u,E}.
\end{equation}

Note that the matrix equation \eqref{eq:mnr} can be also written as 
\begin{equation}\label{eq:wrn}
  \mathbb{M}_{{\cal F},E}^{-1}\,\mathbb{R} = \mathbb{N}.
\end{equation}
Since the role of matrices $\mathbb{R}$ and $\mathbb{N}$ has changed but the equation structure did not,
we can write immediately the general solution to this equation:
\begin{equation}\label{eq:WFE}
  \mathbb{M}_{{\cal F},E}^{-1} 
  = \frac{1}{|E|} \mathbb{N} \mathbb{N}^T
  + (\mbb{I} - \mathbb{R}\, (\mathbb{R}^T\, \mathbb{R})^{-1}\, \mathbb{R}^T)\,
    \tilde{\mathbb{G}}\, (\mbb{I} - \mathbb{R}\, (\mathbb{R}^T\, \mathbb{R})^{-1}\, \mathbb{R}^T),
\end{equation}
where $\tilde{\mathbb{G}}$ is symmetric positive definite matrix.
In a computer code, we can use a scalar matrix 
$\tilde{\mathbb{G}} = \tilde \gamma \mathbb{I}$ where 
$\tilde \gamma = \frac{1}{|E|}$ or $\tilde \gamma = \frac{1}{2\,|E|} \text{ trace} (\mathbb{N}\, \mathbb{N}^T) $.

\begin{remark}
Since both matrix equations \eqref{eq:mnr} and \eqref{eq:wrn} have multiple solutions, 
formulas \eqref{eq:MFE} and \eqref{eq:WFE} with $\gamma = \tilde \gamma^{-1} = |E|$ represent 
non-related members from two families of solutions.
\end{remark}

%%%%%%%%%%%%%%%%%%%%%%%%%%%%%%%%%%%%%%%%%%%%%%%%%%%%%%%%%%%%%%%%%%%%%
\subsection{Computation of the stray field}
\label{sec:stray}

In this subsection, we discuss how to compute the stray field $\mb{h}_s(\mb{m})$
on various meshes and specifically on uniform meshes over rectangular domains.
For general geometries, we refer to the stray field calculation survey \cite{abert2013numerical}.

We can break the integral in equation (\ref{eq:hs}) into sum over mesh elements,
$$ %\begin{equation} \label{eq:hssum}
\begin{aligned}
  \mb{h}_s(x) &=-\frac{1}{4\pi}\!\sum_{E\in\Omega^h} \nabla \int_E \nabla(\frac{ 1}{|x-y|})  \cdot \mb{m}(y) \dy \\
  &= \frac{1}{4\pi}\!\sum_{E\in\Omega^h} \nabla \left\{\int_E \frac{\nabla \cdot \mb{m}(y)}{|x-y|} \dy 
   -\int_{\partial E} \frac{\mb{m}(y) \cdot \mb{n}_{E}}{|x-y|} \dy \right\}.
\end{aligned}
$$ %\end{equation}
The discrete magnetization is constant in each element $E$; hence, its divergence is zero 
and we obtain the following formula for calculating the discrete stray field:
\begin{equation}
  \mb{h}_{s,E} = \frac{1}{4\pi} \sum_{E\in\Omega^h} 
    \int_{\partial E} \frac{x_E-y}{|x_E-y|^3}\; \mb{m}_E(y) \cdot \mb{n}_E \dy.
\end{equation}

Efficient implementation of the above summation can be done on the uniform mesh over 
a rectangular domain.
We applied this method for a few micromagnetic simulations considered
in subsection \ref{subsec:nist} and \ref{subsec:domainwall}.
Let $E_{ijk}$ be a cuboid element,
$$
  E_{ijk} = [x_{i-\frac{1}{2}},x_{i+\frac{1}{2}}] \times 
            [y_{j-\frac{1}{2}},y_{j+\frac{1}{2}}] \times 
            [z_{k-\frac{1}{2}},z_{k+\frac{1}{2}}],
$$
where $x_i = (i-0.5)h_x$, $y_j = (j-0.5)h_y$, and $z_k = (k-0.5)h_z$.
Then, the above summation can be rewritten using a $3\times3$ demagnetization tensor $\mbb{K}$:
\begin{equation}
  \mb{h}_{s,E_{ijk}} = \sum_{p,q,r} \mbb{K}_{i-p,j-q, k-r} \mb{m}_{E_{pqr}}.
\end{equation}
The demagnetization tensor can be computed analytically:
$$
\begin{aligned}
  \mbb{K}_{i-p,j-q, k-r} = -\frac{1}{4 \pi} 
  \sum_{s_x,s_y,s_z} s_x s_y s_z 
  \begin{pmatrix}
    \arctan (\Frac{R_y R_z}{R_x R}) & - \log(R+ R_z) & - \log(R+ R_y)  \\[0.5ex]
    -\log(R+ R_z)  & \arctan (\frac{R_x R_z}{R_y R}) & - \log(R+ R_x)  \\[0.5ex]
    -\log(R+ R_y)  & - \log(R+ R_x)  & \arctan (\Frac{R_x R_y}{R_z R}) 
  \end{pmatrix}
\end{aligned}
$$
where $s_x, s_y, s_z= \pm 1$, $R_x = \frac{h_x}{2}-s_x(i-p)h_x$, 
$R_y = \frac{h_y}{2}-s_y(j-q)h_y$, $R_z = \frac{h_z}{2}  -s_z(k-r)h_z$ and 
$R = \sqrt{R_x^2+R_y^2+R_z^2}$.
In particular, if there is no discretization in the $z$ direction, such as in a thin film, 
then we have $\mbb{K}_{xz} = \mbb{K}_{zx} = \mbb{K}_{yz} = \mbb{K}_{zy}=0$. 
The stray field can be calculated using the FFT, see \cite{hayashi1996calculation,wang2006simulations} for more details.

%%%%%%%%%%%%%%%%%%%%%%%%%%%%%%%%%%%%%%%%%%%%%%%%%%%%%%%%%%%%%%%%%%%%%%%
\section{Stability analysis of Algorithm 1}
\label{sec:stability}
\setcounter{equation}{0}

In this subsection, we consider the case $\mb{\bar h} = 0$ and homogeneous
Neumann boundary conditions that are dominant in numerical experiments.
We will show that the discrete exchange energy decreases in time under certain conditions
on the computational mesh for both explicit and implicit mimetic finite difference schemes.
More precisely, all matrices $\mbb{M}_{{\cal F},E}^{-1}$ must be M-matrices.
In addition, for the explicit scheme, the time step and the mesh size must satisfy a Courant condition.

Let $\|\cdot\|_{\cal F}$ and $\|\cdot\|_{{\cal F},E}$ denote the norms indexed by global and local inner products.
First, we formulate our main result and then prove it as well as two auxiliary lemmas.

\begin{theorem}
Let conditions of Lemmas~1 and 2 hold true. 
For the explicit scheme ($\theta=0$), we further assume the Courant condition
\begin{equation}\label{eq:courant}
  \Frac{k}{h^2} \leq {2\alpha \over C_1 (1+\alpha^2)}. 
\end{equation}
Then, for any time moment $t^j$, we have the following energy estimate:
$$
  || \bGRAD\,\mb{m}^{j+1} ||_{{\mathcal{F}}} \le 
  || \bGRAD\,\mb{m}^{j} ||_{{\mathcal{F}}}.
$$
\end{theorem}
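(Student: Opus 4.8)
The plan is to show that the exchange energy, which here equals $\tfrac12\|\bGRAD\,\mb{m}^j\|_{\cal F}^2$, is non-increasing along the scheme. The key observation is that renormalization onto the unit sphere can only decrease the exchange energy (this is the discrete analogue of the fact that projecting onto $\mathbb{S}^2$ reduces the Dirichlet energy), so it suffices to prove $\|\bGRAD\,\hat{\mb{m}}^{j+1}\|_{\cal F} \le \|\bGRAD\,\mb{m}^j\|_{\cal F}$ for the unprojected solution $\hat{\mb{m}}^{j+1}$ of \eqref{eq:linearsystem}, and then invoke the two lemmas — which I expect to state precisely that (Lemma~1) the M-matrix hypothesis on each $\mbb{M}_{{\cal F},E}^{-1}$ guarantees the renormalization step does not increase $\|\bGRAD\cdot\|_{\cal F}$, and (Lemma~2) the contraction holds at the level of the linear solve.

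First I would rewrite the update \eqref{eq:dmdt} (with $\mb{\bar h}=0$, so $\mb{f}=0$) in the compact mixed form, using the discrete duality $[\mb{m}^h,\bDIV\,\mb{p}^h]_{\cal Q} = -[\bGRAD\,\mb{m}^h,\mb{p}^h]_{\cal F}$ and $\mb{p}^{j+\theta} = -\bGRAD\,\mb{m}^{j+\theta}$. Testing the momentum equation against $\bDIV\,\mb{p}^{j+\theta}$ in the $\cal Q$ inner product is the natural energy test. The gyromagnetic term $\mb{m}^j\times\bDIV\,\mb{p}^{j+\theta}$ is orthogonal to $\bDIV\,\mb{p}^{j+\theta}$ pointwise (cross product), so it drops out; similarly the term $\alpha(\mb{m}^j\cdot\bDIV\,\mb{p}^{j+\theta})\mb{m}^j$ must be handled using $|\mb{m}^j_E|=1$, which is exactly what the renormalization at the previous step secured. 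After applying duality to convert $[\,\cdot\,,\bDIV\,\mb{p}^{j+\theta}]_{\cal Q}$ into $\|\bGRAD\cdot\|_{\cal F}$-type quantities, the dissipative (damping) term contributes a term $-\alpha\|\bDIV\,\mb{p}^{j+\theta}\|^2$ of definite sign.

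For the \emph{implicit} case ($\theta=1$) I expect the energy identity to close by itself: the algebraic manipulation yields $\|\bGRAD\,\hat{\mb{m}}^{j+1}\|_{\cal F}^2 - \|\bGRAD\,\mb{m}^j\|_{\cal F}^2 \le -2\alpha k\,\|\bDIV\,\mb{p}^{j+1}\|^2 + (\text{terms of mismatched time level})$, and the implicit evaluation makes the sign-indefinite terms land on the correct side. For the \emph{explicit} case ($\theta=0$) the quadratic term coming from the time increment, $\|\mb{m}^{j+1}-\mb{m}^j\|^2/k$, is no longer absorbed automatically; bounding it against the dissipation requires an inverse estimate $\|\bDIV\,\mb{p}\|^2 \le C_1 h^{-2}\|\mb{p}\|_{\cal F}^2 = C_1 h^{-2}\|\bGRAD\,\mb{m}\|_{\cal F}^2$, and substituting this is precisely where the Courant condition \eqref{eq:courant} enters to keep the net coefficient non-positive.

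\textbf{The main obstacle} I anticipate is the term $\alpha(\mb{m}^j\cdot\bDIV\,\mb{p}^{j+1})\mb{m}^j$: unlike in the continuum identity \eqref{eq:LLsimplified}, where $|\mb{m}|=1$ and $\mb{m}\cdot\partial_u\mb{m}=0$ hold pointwise and exactly, the discrete cell-based $\mb{m}^j_E$ only satisfies $|\mb{m}^j_E|=1$ after projection, and there is no exact discrete orthogonality $\mb{m}_E\cdot(\bGRAD\,\mb{m})=0$. Controlling this term is, I believe, exactly the role of the M-matrix hypothesis on $\mbb{M}_{{\cal F},E}^{-1}$: the off-diagonal sign structure of an M-matrix should make the relevant discrete quadratic form comparable to a graph-Laplacian-type form for which the projection/monotonicity argument of Lemma~1 applies edge-by-edge. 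I would therefore prove the two lemmas first (monotonicity of renormalization under the M-matrix condition, and the inverse/Courant bound for $\bDIV$), then assemble them into the one-line energy inequality above, treating $\theta=0$ and $\theta=1$ as the two cases distinguished by whether \eqref{eq:courant} is needed.
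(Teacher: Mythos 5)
Your overall architecture --- expand the discrete energy, use the duality of $\bGRAD$ and $\bDIV$ to evaluate the cross term, invoke an inverse estimate plus the Courant condition for $\theta=0$, and finish with the renormalization lemma --- matches the paper's proof, and your test against $\bDIV\,\mb{p}^{j+\theta}$ is algebraically equivalent to the paper's test against the increment $\mb{v}^j=(\hat{\mb{m}}^{j+1}-\mb{m}^j)/k$. However, what you single out as ``the main obstacle,'' the term $\alpha(\mb{m}^j\cdot\bDIV\,\mb{p}^{j+\theta})\mb{m}^j$, is resolved by a completely different and far more elementary mechanism than the one you propose, and your proposed mechanism would not work. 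The key fact is an exact, cell-by-cell algebraic orthogonality: dotting the right-hand side of the scheme with $\mb{m}^j_E$ and using $|\mb{m}^j_E|=1$ (which the projection at the previous step guarantees), the gyromagnetic term vanishes and the two damping contributions cancel, so $\mb{m}^j_E\cdot\mb{v}^j_E=0$ exactly. No discrete analogue of $\mb{m}\cdot\partial_u\mb{m}=0$ is required, and no M-matrix structure enters: the term is cell-local and is annihilated pointwise. Consequently, in your test the damping contributions combine into $-\alpha\,\|(\mbb{I}-\mb{m}^j_E(\mb{m}^j_E)^T)\,\bDIV\,\mb{p}^{j+\theta}\|^2_{\cal Q}=-\frac{\alpha}{1+\alpha^2}\|\mb{v}^j\|^2_{\cal Q}$, not $-\alpha\|\bDIV\,\mb{p}^{j+\theta}\|^2_{\cal Q}$ as you claim. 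This distinction is harmless for $\theta=1$ but essential for $\theta=0$: it is precisely the projected dissipation $-\frac{2\alpha k}{1+\alpha^2}\|\mb{v}^j\|^2_{\cal Q}$ that must absorb the quadratic term $k^2\|\bGRAD\,\mb{v}^j\|^2_{\cal F}$, via the inverse estimate applied to the cell field $\mb{v}^j$ (i.e.\ $\|\bGRAD\,\mb{v}^j\|^2_{\cal F}\le C_1h^{-2}\|\mb{v}^j\|^2_{\cal Q}$, which your $\bDIV$-form of the estimate yields by duality), and this is what produces the stated constant $2\alpha/(C_1(1+\alpha^2))$ in \eqref{eq:courant}.

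The M-matrix hypothesis belongs exclusively to the renormalization lemma (the paper's Lemma~\ref{lemma:normdecreasing}): it ensures that the Schur complement of the assembled gradient quadratic form is a singular M-matrix, hence a nonnegative combination of pairwise differences $\beta_{ij}(v_{u,E_i}-v_{u,E_j})^2$, and it is only for such graph-Laplacian-type forms that cell-wise projection onto the sphere decreases the energy. Moreover, that lemma carries the hypothesis $|\hat{\mb{m}}^{j+1}_E|\ge 1$ --- projection from \emph{inside} the sphere can increase the energy --- and your proposal never verifies it. It follows from the same orthogonality: $|\hat{\mb{m}}^{j+1}_E|^2=|\mb{m}^j_E|^2+k^2|\mb{v}^j_E|^2\ge 1$. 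In short, the single identity $\mb{m}^j_E\cdot\mb{v}^j_E=0$ both eliminates your ``main obstacle'' and licenses the renormalization step; by misattributing both jobs to the M-matrix condition, your proposal leaves the central step of the proof without a valid justification.
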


\begin{proof}
Let $\hat{\mb{m}}^{j+1}$ denote the magnetization solution obtained on step 2a of Algorithm~1. 
Let $\mb{v}^j$ denote the change of magnetization, i.e. $\hat{\mb{m}}^{j+1} = \mb{m}^{j} + k \mb{v}^j$.
Note that ${\mb{m}}^{j}_E \cdot \mb{v}^j_E = 0$ and $|\mb{m}^{j}_E|=1$ for all $E$. 
Hence, $|\hat{\mb{m}}^{j}_E| \ge 1$ and Lemma~\ref{lemma:normdecreasing} gives us energy decrease
after normalization:
\begin{equation} \label{eq:dec}
  || \bGRAD\,\mb{m}^{j+1} ||_{{\mathcal{F}}} \le || \bGRAD\,\hat{\mb{m}}^{j+1} ||_{{\mathcal{F}}}.
\end{equation}
We have the following identity:
\begin{equation}\label{eq:energy-aux}
\begin{aligned}
  \| \bGRAD\big(\mb{m}^{j} + k \mb{v}^j\big)\|_{\mathcal{F}}^2 
  &= \|\bGRAD\,\mb{m}^{j}||^2_{\mathcal{F}} \\[1ex]
  &+ 2 k\, [\bGRAD\,\mb{m}^{j},\,\bGRAD\, \mb{v}^j ]_{\mathcal{F}}
   + k^2 \| \bGRAD\, \mb{v}^j\|_{\mathcal{F}}^2
\end{aligned}
\end{equation}
The second equation in the $\theta$-scheme \eqref{eq:dmdt} gives us the definition of $\mb{v}^j$:
$$
\begin{aligned} 
  \mb{v}_E^j 
  = -\mb{m}_E^j \times \bDIV_E\,\mb{p}_E^{j+\theta}
    -\alpha\, \big(\mb{m}_E^j \cdot \bDIV_E\,\mb{p}_E^{j+\theta} \big) \mb{m}_E^j
    +\alpha\, \bDIV_E\,\mb{p}_E^{j+\theta}
\end{aligned}
$$
Using this formula and the fact that $|\mb{m}_E^j|=1$, we can calculate the following quantity:
\begin{equation}\label{eq:lld1}
\begin{aligned} 
  \alpha \mb{v}_E^j + \mb{m}_E^j \times \mb{v}_E^j 
  = -(1+\alpha^2)\, \big((\mb{m}_E^j \cdot \bDIV_E\,\mb{p}_E^{j+\theta})\, \mb{m}_E^j
    -\bDIV_E\,\mb{p}_E^{j+\theta}\big).
\end{aligned}
\end{equation}
Taking the dot product of both sides with $\mb{v}_E^j$, summing up the results weighted 
with element volumes, and using the duality property of the mimetic operators, we obtain
\begin{equation}\label{eq:aux}
\begin{aligned}
  -\Frac{\alpha}{1+\alpha^2} [\mb{v}^j,\,\mb{v}^j]_{\mathcal{Q}} 
  &= -[\bDIV\,\mb{p}^{j+\theta},\, \mb{v}^j]_{\mathcal{Q}} 
   = [\bGRAD\big(\mb{m}^{j} + \theta k \mb{v}^j\big),\, \bGRAD \mb{v}^j]_{\mathcal{Q}} \\
  &= [\bGRAD\, \mb{m}^{j},\, \bGRAD \mb{v}^j]_{\mathcal{Q}} 
   + \theta k\,[\bGRAD \mb{v}^j,\, \bGRAD\,\mb{v}^j]_{\mathcal{Q}}
\end{aligned}
\end{equation}
Inserting \eqref{eq:aux} into \eqref{eq:energy-aux}, we have
$$
\begin{aligned}
  \| \bGRAD\, \hat{\mb{m}}^{j+1} \|_{\mathcal{F}}^2 
  = \| \bGRAD\, \mb{m}^{j} \|^2_{\mathcal{F}} 
   -\Frac{2\alpha\,k}{1+\alpha^2}\, \|\mb{v^j}\|_{\mathcal{Q}}^2 
   -k^2\, (2\theta - 1) \|\bGRAD\,\mb{v}^j\|_{\mathcal{F}}^2.
\end{aligned}
$$
This shows that for the implicit scheme, $\theta=1$, the energy decreases using equation \eqref{eq:dec}.
For the explicit scheme, we use Lemma~\ref{lemma:inverseestimate} to obtain
$$
  \| \bGRAD\, \mb{m}^{j+1} \|_{\mathcal{F}}^2 
  \le \| \bGRAD\, \mb{m}^{j} \|^2_{\mathcal{F}} 
   -k\,\left( \Frac{2\alpha}{1+\alpha^2} - \Frac{C_1\,k}{h^2}\right) \|\mb{v}^j\|_{\mathcal{Q}^h}^2 
$$
which shows that the energy decreases under the Courant condition \eqref{eq:courant}.
\end{proof}

What is left is to prove two technical lemmas used in Theorem~1.
The first lemma uses the stability condition of the mimetic scheme.
The second lemma assumes that the family of elemental matrices $\mbb{M}_{{\cal F},E}^{-1}$ 
contains M-matrices which imposes certain constraints on the shape of mesh cells.

\begin{lemma} \label{lemma:inverseestimate}
Let mesh $\Omega^h$ be shape regular and quasi-uniform. 
Then, for any $\mb{v}^h \in ({\cal Q}^h)^3$, we have the following 
inverse estimate:
\begin{equation} \label{eq:inv}
  \| \bGRAD\,\mb{v}^h\|_{\mathcal{F}}^2 \le \frac{C_1}{h^2} \| \mb{v}^h \|_{\mathcal{Q}}^2,
\end{equation}
where $C_1$ is a positive constant independent of $h$ and $\mb{v}^h$.
\end{lemma}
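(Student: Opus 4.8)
The plan is to prove the estimate element by element and then sum, exploiting that the mimetic gradient is a first-order difference quotient whose squared $\mathcal{F}$-norm costs two inverse powers of the mesh size. First I would localize both sides: by the definitions \eqref{innerQ} and \eqref{innerF}, we have $\|\mb{v}^h\|_{\mathcal{Q}}^2 = \sum_{E}|E|\,|\mb{v}_E|^2$ and $\|\bGRAD\,\mb{v}^h\|_{\mathcal{F}}^2 = \sum_{u}\sum_{E} p_{u,E}^T\,\mathbb{M}_{\mathcal{F},E}\,p_{u,E}$, where $p_{u,E}=\GRAD_E(\cdots)$ is given by the explicit formula \eqref{eq:pW}. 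It therefore suffices to bound each elemental contribution $\|\GRAD_E(\cdots)\|_{\mathcal{F},E}^2$ by $(C_1/h^2)$ times the $\mathcal{Q}$-mass of the cell values feeding into that element, and then to reassemble, checking that every cell is charged only a uniformly bounded number of times (once per incident edge) — which is where shape regularity enters.

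The second step is the algebraic core. Substituting \eqref{eq:pW} into the elemental inner product and using symmetry of $\mathbb{M}_{\mathcal{F},E}$ gives
$$
\|\GRAD_E(\cdots)\|_{\mathcal{F},E}^2
= \big(\mathbb{C}_E\,\mb{d}_{u,E}\big)^T\,\mathbb{M}_{\mathcal{F},E}^{-1}\,\big(\mathbb{C}_E\,\mb{d}_{u,E}\big),
$$
where $\mb{d}_{u,E}$ is the vector of face differences $m_{u,f}-m_{u,E}$ and $\mathbb{C}_E=\diag\{|f_1|,\ldots,|f_n|\}$. The stability condition \eqref{eq:stability} forces the eigenvalues of $\mathbb{M}_{\mathcal{F},E}$ into $[c_0|E|,\,C_0|E|]$, so $\mathbb{M}_{\mathcal{F},E}^{-1}\le (c_0|E|)^{-1}\mbb{I}$ and the elemental norm is bounded by $(c_0|E|)^{-1}\sum_{f\in\partial E}|f|^2\,(m_{u,f}-m_{u,E})^2$. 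The geometric scaling then produces the inverse power: on a shape-regular, quasi-uniform mesh $|f|\lesssim h^{d-1}$ and $|E|\gtrsim h^d$, whence $|f|^2/|E|\lesssim |E|/h^2$, which is exactly the factor claimed.

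Third, I would eliminate the face values. Since $\mb{v}^h\in(\mathcal{Q}^h)^3$ carries only cell-centered data, the auxiliary edge unknowns must be removed through the flux-continuity constraints \eqref{eq:continuity}, after which each interior difference $m_{u,f}-m_{u,E}$ is controlled by the cell-to-cell jump $m_{u,E'}-m_{u,E}$ across $f$ (boundary edges being handled trivially under homogeneous Neumann data). Bounding $(m_{u,E'}-m_{u,E})^2\le 2(m_{u,E'}^2+m_{u,E}^2)$ and summing over all edges — each cell recurring a uniformly bounded number of times — collapses the sum to $\sum_E |E|\,|\mb{v}_E|^2$, with $C_1$ absorbing $c_0$, the maximal cell valence, and the shape-regularity constants. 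The main obstacle is precisely this elimination-and-bookkeeping step: making the reduction of face differences to cell jumps rigorous and verifying that $C_1$ is genuinely $h$-independent forces the quasi-uniformity hypothesis to be used to keep both $|f|^2/|E|$ and the valence under uniform control; the purely local estimate in the second step is routine once \eqref{eq:stability} is granted.
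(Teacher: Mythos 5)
Your Steps 1 and 2 are sound: localizing through the local--global equivalence of the gradient, writing $\| \GRAD_E(\cdot)\|_{\mathcal{F},E}^2 = (\mbb{C}_E\,\mb{d}_{u,E})^T\,\mbb{M}_{{\cal F},E}^{-1}\,(\mbb{C}_E\,\mb{d}_{u,E})$, and invoking the stability condition \eqref{eq:stability} together with the scalings $|f|\lesssim h^{d-1}$, $|E|\simeq h^{d}$ are all correct. The genuine gap is exactly the step you yourself flag as the ``main obstacle'': the claim that, once the auxiliary edge unknowns are eliminated through flux continuity, each difference $m_{u,f}-m_{u,E}$ is controlled by the cell-to-cell jump across $f$. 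For a general polygonal cell $\mbb{M}_{{\cal F},E}^{-1}$ is a full matrix, so the continuity equation on an edge $f$ couples the edge values on \emph{all} edges of the two adjacent cells; assembling these equations gives the globally coupled system $\mbb{T}^{fE}\,v_u^h + \mbb{T}^{ff}\,\widetilde{v}_u^h = 0$ with a non-diagonal $\mbb{T}^{ff}$ (this is the system appearing in the proof of Lemma~\ref{lemma:normdecreasing}). A pointwise bound of an eliminated edge value by its two neighboring cell values is a discrete maximum principle; it holds when the $\mbb{M}_{{\cal F},E}^{-1}$ have M-matrix structure --- precisely the hypothesis of Lemma~\ref{lemma:normdecreasing}, which is \emph{not} assumed here --- and can fail under mere shape regularity and quasi-uniformity. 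So Step 3, as written, does not go through.

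The fix is cheap and preserves your elemental strategy: the flux-continuity equations are exactly the stationarity conditions, with respect to $\widetilde{v}_u^h$, of the assembled positive semidefinite quadratic form $\sum_E \|\GRAD_E(v_{u,E},\widetilde{v}_{u,E})\|_{\mathcal{F},E}^2$; hence the true (constrained) gradient norm is the \emph{minimum} of this form over all edge values and is bounded above by the value obtained from any admissible choice, e.g.\ $m_{u,f}=\tfrac12(m_{u,E_1}+m_{u,E_2})$ on interior edges and $m_{u,f}=m_{u,E}$ on boundary edges. With that choice your Step 2 bound and the bounded-valence counting immediately yield \eqref{eq:inv}, and no maximum principle is needed. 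Note that the paper proves the lemma by an entirely different, global argument: by the duality defining the mimetic gradient, $\GRAD = -\mbb{M}_{\mathcal{F}}^{-1}\,\DIV^T\,\mbb{M}_{\mathcal{Q}}$, so that $\|\GRAD\, v^h_u\|_{\mathcal{F}} = \|\mbb{M}_{\mathcal{F}}^{-1/2}\,\DIV^T\,\mbb{M}_{\mathcal{Q}}\,v^h_u\|$, and the three factors are bounded by $C_2 h^{-d/2}$ (stability), $C_3 h^{-1}$ (definition of the divergence), and $h^{d/2}$, which sidesteps the hybrid edge unknowns altogether; your local route, once repaired as above, is a valid alternative that makes the geometric origin of the $h^{-2}$ factor more explicit.
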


\begin{proof} For each component $u \in \{x,y,z\}$, we use definition of the 
mimetic operators, and stability condition \eqref{eq:stability} to obtain
$$
  \| \GRAD\, v^h_u \|_{\mathcal{F}} 
  = \|\mathbb{M}_{\mathcal{F}}^{-1/2}\, \DIV^T\, \mathbb{M}_{\mathcal{Q}} v^h_u \| 
  \le \Frac{C_2}{h^{d/2}} \|\DIV^T\, \mathbb{M}_{\mathcal{Q}} v^h_u \|
  \le \Frac{C_2}{h^{d/2}} \|\DIV^T\| \; \|\mathbb{M}_{\mathcal{Q}} v^h_u \|.
$$
where $C_2$ depends on $c_0$  in \eqref{eq:stability} and the shape regularity of mesh $\Omega^h$.
The definition of the divergence operator gives $\| \DIV^T \| \le C_3\, h^{-1}$, where
$C_3$ depends only on the mesh shape regularity constants.
The definition of the inner product matrix gives $\| \mbb{M}_{\mathcal{Q}}^{1/2} \| \le h^{d/2}$.
Thus, we have 
$$
  \| \GRAD\, v^h_u \|_{\mathcal{F}} 
  \le \Frac{C_2\,C_3}{h} \|v^h_u\|_{\mathcal{Q}}.
$$
The assertion of the lemma follows with $C_1 = (C_2\,C_3)^2$.
\end{proof}

\begin{lemma} \label{lemma:normdecreasing}
Let each elemental matrix $\mbb{M}_{{\cal F},E}$ satisfy two conditions: 
(a) $\mbb{M}_{{\cal F},E}^{-1}$ is an M-matrix, and 
(b) vector $\mbb{M}_{{\cal F},E}^{-1}\, \mbb{C}_E\, \bm{e}$ has positive entries.
Furthermore, let $\hat{\mb{v}}^h \in ({\cal Q}^h)^3$ be any vector and $\mb{v}^h$ be its normalization
such that $\mb{v}_E = \hat{\mb{v}}_E / |\hat{\mb{v}}_E|$ for all $E$ in $\Omega^h$.
Finally, let $|\hat{\mb{v}}_E| \ge 1$.
Then, we have energy decrease after the renormalization:
\begin{equation} \label{eq:normdec}
  \| \bGRAD\, \mb{v}^h \|_{\mathcal{F}}
  \le \| \bGRAD\,\hat{\mb{v}}^h \|_{\mathcal{F}}.
\end{equation}
\end{lemma}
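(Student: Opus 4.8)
The plan is to reduce \eqref{eq:normdec} to a termwise contraction inequality for a weighted graph-Laplacian energy, so that the projection can be handled one node pair at a time. First I would rewrite the exchange energy in a hybridized form. From \eqref{eq:pW} the local flux is $p_{u,E}=\mathbb{M}_{{\cal F},E}^{-1}\,\mathbb{C}_E\,(m_{u,E}\bm{e}-\widetilde m_{u,E})$, so the elemental energy is $\,p_{u,E}^T\mathbb{M}_{{\cal F},E}\,p_{u,E}=d_{u,E}^T\,\mathbb{W}_E\,d_{u,E}\,$ with $\mathbb{W}_E=\mathbb{C}_E\,\mathbb{M}_{{\cal F},E}^{-1}\,\mathbb{C}_E$ and $(d_{u,E})_f=m_{u,E}-m_{u,f}$. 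By the local–global equivalence established above, and because the flux-continuity constraints \eqref{eq:continuity} (together with the homogeneous Neumann condition, which is the natural stationarity condition for the boundary edge values) are exactly the Euler–Lagrange equations of the free minimization over the auxiliary edge magnetizations, the global exchange energy is
\[
  \|\bGRAD\,\mb{m}^h\|_{\mathcal F}^2=\min_{\{\mb{m}_f\}}\ \sum_{u\in\{x,y,z\}}\sum_{E\in\Omega^h} d_{u,E}^T\,\mathbb{W}_E\,d_{u,E}.
\]

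Next I would expose the graph-Laplacian structure. For any symmetric $\mathbb{W}_E$ one has the algebraic identity
\[
  d^T\mathbb{W}_E\,d=\sum_{f} r_f^E\,d_f^2+\tfrac{1}{2}\sum_{f\ne f'}\bigl(-(\mathbb{W}_E)_{ff'}\bigr)(d_f-d_{f'})^2,\qquad r_f^E:=(\mathbb{W}_E\,\bm{e})_f.
\]
Condition (a) gives $(\mathbb{W}_E)_{ff'}=|f|\,|f'|\,(\mathbb{M}_{{\cal F},E}^{-1})_{ff'}\le 0$ for $f\ne f'$, while condition (b) gives $r_f^E=|f|\,(\mathbb{M}_{{\cal F},E}^{-1}\mathbb{C}_E\,\bm{e})_f>0$; hence every coefficient above is non-negative. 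Since $(d_{u,E})_f=m_{u,E}-m_{u,f}$ and $(d_{u,E})_f-(d_{u,E})_{f'}=m_{u,f'}-m_{u,f}$, summing over the three components turns $d_f^2$ into $|\mb{m}_E-\mb{m}_f|^2$ and $(d_f-d_{f'})^2$ into $|\mb{m}_f-\mb{m}_{f'}|^2$. The assembled energy is therefore a sum over node pairs $(a,b)$ drawn from cell centers and mesh edges, $\Phi=\sum_{(a,b)} w_{ab}\,|\mb{x}_a-\mb{x}_b|^2$ with all weights $w_{ab}\ge 0$.

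Finally I would close the argument by a nonexpansiveness estimate for the projection onto the closed unit ball, $P(\mb{x})=\mb{x}/\max(1,|\mb{x}|)$, which satisfies $|P(\mb{a})-P(\mb{b})|\le|\mb{a}-\mb{b}|$ since it is the metric projection onto a convex set. Let $\{\hat{\mb{m}}_f\}$ realize the minimum for $\hat{\mb{v}}^h$, so $\Phi(\{\hat{\mb{v}}_E\},\{\hat{\mb{m}}_f\})=\|\bGRAD\,\hat{\mb{v}}^h\|_{\mathcal F}^2$. Because $|\hat{\mb{v}}_E|\ge 1$ we have $P(\hat{\mb{v}}_E)=\mb{v}_E$, and applying $P$ to every node and using the contraction on each term of $\Phi$ yields
\[
  \|\bGRAD\,\mb{v}^h\|_{\mathcal F}^2\le \Phi\bigl(\{\mb{v}_E\},\{P(\hat{\mb{m}}_f)\}\bigr)\le \Phi\bigl(\{\hat{\mb{v}}_E\},\{\hat{\mb{m}}_f\}\bigr)=\|\bGRAD\,\hat{\mb{v}}^h\|_{\mathcal F}^2,
\]
where the first inequality holds because the left energy is the minimum over edge values and $\{P(\hat{\mb{m}}_f)\}$ is one admissible choice. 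Taking square roots gives \eqref{eq:normdec}. The hard part will be Steps~1--2: justifying the identification of the global energy with the hybridized minimization and verifying that (a) and (b) make \emph{all} the weights non-negative; once the graph-Laplacian representation is secured, the contraction step is routine. Working with the unit-ball projection rather than with sphere normalization is the device that lets me avoid requiring $|\mb{m}_f|\ge 1$ at the auxiliary edge nodes, where no magnitude lower bound is available.
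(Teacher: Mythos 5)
Your proof is correct, but it takes a genuinely different route from the paper's. The paper eliminates the edge unknowns algebraically: it assembles the local quadratic forms, uses the flux-continuity relation $\mbb{T}^{fE}v_u^h+\mbb{T}^{ff}\widetilde v_u^h=0$ to form the Schur complement $\mbb{S}^{EE}$, and then invokes M-matrix theory (the cited result of Lipnikov et al.\ that conditions (a)--(b) make the local matrices singular irreducible M-matrices with null vector $\bm{e}$, plus the linear-algebra fact that Schur complements of singular M-matrices are again singular M-matrices) to write the energy as a cell-pair graph Laplacian $\sum_{i<j}\beta_{ij}(v_{u,E_i}-v_{u,E_j})^2$ with $\beta_{ij}\ge0$; the sphere-normalization contraction is then applied only at cell centers, where $|\hat{\mb{v}}_E|\ge1$ is available. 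You instead keep the edge unknowns, characterize the exchange energy as the minimum of the hybridized functional over edge values (correctly identifying flux continuity and the homogeneous Neumann condition as the Euler--Lagrange equations of that convex minimization -- a point the paper's proof glosses over for boundary edges), expose the graph-Laplacian structure locally via the elementary identity $d^T\mbb{T}_E d=\sum_f(\mbb{T}_E\bm{e})_f d_f^2+\tfrac12\sum_{f\ne f'}(-(\mbb{T}_E)_{ff'})(d_f-d_{f'})^2$, whose coefficients are nonnegative directly by (a) and (b), and then project \emph{all} nodes with the ball projection $P(\mb{x})=\mb{x}/\max(1,|\mb{x}|)$, which is nonexpansive as a metric projection onto a convex set and coincides with sphere normalization at the centers. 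What your route buys is self-containedness: no appeal to the singular-irreducible-M-matrix machinery or to preservation of the M-matrix property under Schur complementation, and no need for any magnitude bound at edge nodes (which indeed does not exist); what the paper's route buys is the explicit cell-pair Laplacian representation \eqref{eq:SEE} of the condensed energy, which connects to the cited monotonicity analysis and makes the final contraction a purely center-based computation. Both arguments implicitly use solvability of the edge-elimination system (invertibility of $\mbb{T}^{ff}$); your variational formulation is in fact slightly more robust on that point, since convexity guarantees any stationary point attains the minimum.
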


\begin{proof}
Using the global and local mimetic gradient operators, we define vectors 
$$
\mb{q} = \bGRAD\,\mb{v}^h\quad\mbox{and}\quad 
\mb{q}_E = \bGRAD_E\,
  \begin{pmatrix}\mb{v}_E\\ \widetilde{\mb{v}}_E \end{pmatrix}.
$$
The global and local gradient operators are equivalent when $q_{u,E_1,f} + q_{u,E_2,f} = 0$
on each internal mesh edge shared by two elements $E_1$ and $E_2$.
Under this continuity condition, the additivity property of the inner product gives:
$$
  \| \mb{q} \|_{\mathcal{F}}^2
  = \sum\limits_{E \in \Omega^h} \| \mb{q}_E \|_{\mathcal{F},E}^2.
$$

Formula \eqref{eq:pW} for the local mimetic gradient operator and definition of
diagonal matrix $\mbb{C}_E$ give
\begin{equation}\label{eq:quE}
  \| \mb{q}_{u,E} \|_{\mathcal{F},E}^2 
  = \begin{pmatrix} v_{u,E} \\[1ex] \widetilde{v}_{u,E} \end{pmatrix}^T
  \begin{pmatrix}
    \bm{e}^T \mbb{T}_E\, \bm{e} & -\bm{e}^T \mbb{T}_E \\[1ex]
    -\mbb{T}_E\, \mb{e}         & ~\mbb{T}_E 
  \end{pmatrix}
  \begin{pmatrix} v_{u,E} \\[1ex] \widetilde{v}_{u,E} \end{pmatrix},
  \qquad 
  \mbb{T}_E = \mbb{C}_E\, \mbb{M}_{{\cal F},E}^{-1}\, \mbb{C}_E.
\end{equation}
The entries of vector $\widetilde{v}_{u,E}^h$ are associated with mesh edges
and defined completely by the flux continuity condition $q_{u,E_1,f} + q_{u,E_2,f} = 0$.
To find another form of this condition, we sum up equations \eqref{eq:quE} which gives
\begin{equation}\label{eq:assembled}
  \| \mb{q}_u \|_{\mathcal{F}}^2
  = \sum\limits_{E \in \Omega^h} \| \mb{q}_{u, E} \|_{\mathcal{F},E}^2
  = \begin{pmatrix} v_u^h \\[1ex] \widetilde{v}_u^h \end{pmatrix}^T
  \begin{pmatrix}
    \mbb{T}^{EE} & \mbb{T}^{Ef} \\[1ex]
    \mbb{T}^{fE} & \mbb{T}^{ff}
  \end{pmatrix}
  \begin{pmatrix} v_u^h \\[1ex] \widetilde{v}_u^h \end{pmatrix}.
\end{equation}
Now, inserting \eqref{eq:pW} in the flux continuity equations pre-multiplied by $|f|$, we obtain
$$
  \mbb{T}^{fE}\, v_u^h + \mbb{T}^{ff} \widetilde{v}_u^h = 0.
$$
Using this relationship in formula \eqref{eq:assembled}, we obtain
$$
  \| \mb{q}_u \|_{\mathcal{F}}^2 = 
    (v_u^h)^T \big(\mbb{T}^{EE} - \mbb{T}^{Ef}\, (\mbb{T}^{ff})^{-1}\, \mbb{T}^{fE}\big)\, v_u^h 
  =: (v_u^h)^T\, \mbb{S}^{EE}\, v_u^h.
$$
The Schur complement $\mbb{S}^{EE}$ has one important property.
According to \cite{lipnikov2011analysis}, the conditions (a) and (b) 
imply that the local matrices in \eqref{eq:quE} are singular irreducible M-matrices
with the single null vector $\bm{e}$. 
Hence, the 
assembled matrix in \eqref{eq:assembled} is a singular M-matrix.
From linear algebra we know that the Schur complement is also a singular M-matrix.
Since $\mbb{S}^{EE}\, \bm{e} = 0$, the following vector-matrix-vector product can be broken into 
assembly of $2\times2$ matrices,
\begin{equation}\label{eq:SEE}
  (v_u^h)^T\, \mbb{S}^{EE}\, v_u^h
  = \sum\limits_{i < j} 
    \beta_{ij}
    \begin{pmatrix} v_{u,E_i} \\ v_{u,E_j} \end{pmatrix}^T
    \begin{pmatrix}
     ~1 & -1 \\
     -1 & ~1 
    \end{pmatrix}
    \begin{pmatrix} v_{u,E_i} \\ v_{u,E_j} \end{pmatrix},
\end{equation}
with non-negative weights $\beta_{ij}$.
Recall that $v_{u,E_i} = \hat{v}_{u,E_i} / |\hat{\mb{v}}_{E_i}|$
and $|\hat{\mb{v}}_{E_i}|\ge 1$.
Thus, we have the following estimate: 
$$
\begin{aligned}
\sum_{u \in \{x,y,z\}} &\left( \Frac{(\hat{v}_{u,E_i})^2}{|\hat{\mb{v}}_{E_i}|^2} 
    -2 \Frac{\hat{v}_{u,E_i}}{|\hat{\mb{v}}_{E_i}|}\,
       \Frac{\hat{v}_{u,E_j}}{|\hat{\mb{v}}_{E_j}|}
    + \Frac{(\hat{v}_{u,E_j})^2}{|\hat{\mb{v}}_{E_j}|^2} \right) 
  = 2 - 2 \sum_{u \in \{x,y,z\}} \Frac{\hat{v}_{u,E_i}}{|\hat{\mb{v}}_{E_i}|}\,
           \Frac{\hat{v}_{u,E_j}}{|\hat{\mb{v}}_{E_j}|}\\
  & \leq |\hat{\mb{v}}_{E_i}|\, |\hat{\mb{v}}_{E_j}| 
    \left( \frac{|\hat{\mb{v}}_{E_i}|}{|\hat{\mb{v}}_{E_j}|} 
          +\frac{|\hat{\mb{v}}_{E_j}|}{|\hat{\mb{v}}_{E_i}|} 
          - 2 \sum_{u \in \{x,y,z\}} \Frac{\hat{v}_{u,E_i}}{|\hat{\mb{v}}_{E_i}|}\,
              \Frac{\hat{v}_{u,E_j}}{|\hat{\mb{v}}_{E_j}|}\right)
  = \sum_{u \in \{x,y,z\}} (\hat{v}_{u,E_i}-\hat{v}_{u,E_j})^2.
\end{aligned}
$$
We conclude that
$$
 \sum_{u \in \{x,y,z\}} (v_u^h)^T\, \mbb{S}^{EE}\, v_u^h \le
\sum_{u \in \{x,y,z\}}  (\hat{v}_u^h)^T\, \mbb{S}^{EE}\, \hat{v}_u^h
  = \sum_{u \in \{x,y,z\}}\| \GRAD\, \hat{v}_u^h \|_{{\cal F}}^2.
$$
and the assertion of the lemma follows.
\end{proof}

\begin{remark}
To comply with the conditions of Lemma~\ref{lemma:normdecreasing}, we have
to select special matrices $\tilde{\mathbb{G}}$ in \eqref{eq:WFE}. 
A simple optimization algorithm for this task is proposed in \cite{Lipnikov:FVCA}.
\end{remark}

%%%%%%%%%%%%%%%%%%%%%%%%%%%%%%%%%%%%%%%%%%%%%%%%%%%%%%%%%%%%%%%%%%%%%
\section{Numerical Examples}
\label{sec:numerical}
\setcounter{equation}{0}

\subsection{Explicit time integration scheme ($\theta=0$)}

In this subsection, we consider the explicit time integration scheme, i.e. $\theta=0$ in \eqref{eq:dmdt}.
Analytical solutions for the LL equation are available for special forms of the effective 
field (\ref{eq:LLenergy}), for instance when it has only the exchange energy term, $\mb{h}= \Delta \mb{m}$.
We consider the analytical solution from \cite{fuwa2012finite} with the periodic boundary conditions, namely
\begin{equation}\label{eq:exact}
\begin{aligned}
  &m_x(x_1,x_2,t) =\frac{1}{d(t)} \sin \beta \cos(\kappa(x_1+x_2)+g(t)), \\
  &m_y(x_1,x_2,t) =\frac{1}{d(t)} \sin \beta \sin(\kappa(x_1+x_2)+g(t)), \\
  &m_z(x_1,x_2,t) =\frac{1}{d(t)} e^{2 \kappa^2 \alpha t} \cos \beta.
\end{aligned}
\end{equation}
where $\beta = \frac{\pi}{12}$, $\kappa=2\pi$, $d(t) = \sqrt{\sin^2\beta + e^{4\kappa^2\alpha t} \cos^2 \beta}$ 
and $g(t)=\frac{1}{\alpha}\log(\frac{d(t)+e^{2\kappa^2 \alpha t} \cos \beta}{1+\cos \beta})$.
Note that $m_z \to 1$ as $t \to \infty$. 
The snapshot of the analytical solution (\ref{eq:exact}) at time $0$ is in Fig.~\ref{figure:analytical}. 
\begin{figure}[h]
\begin{center}
  \includegraphics[width=.4\linewidth,trim=20 25 20 20,clip]{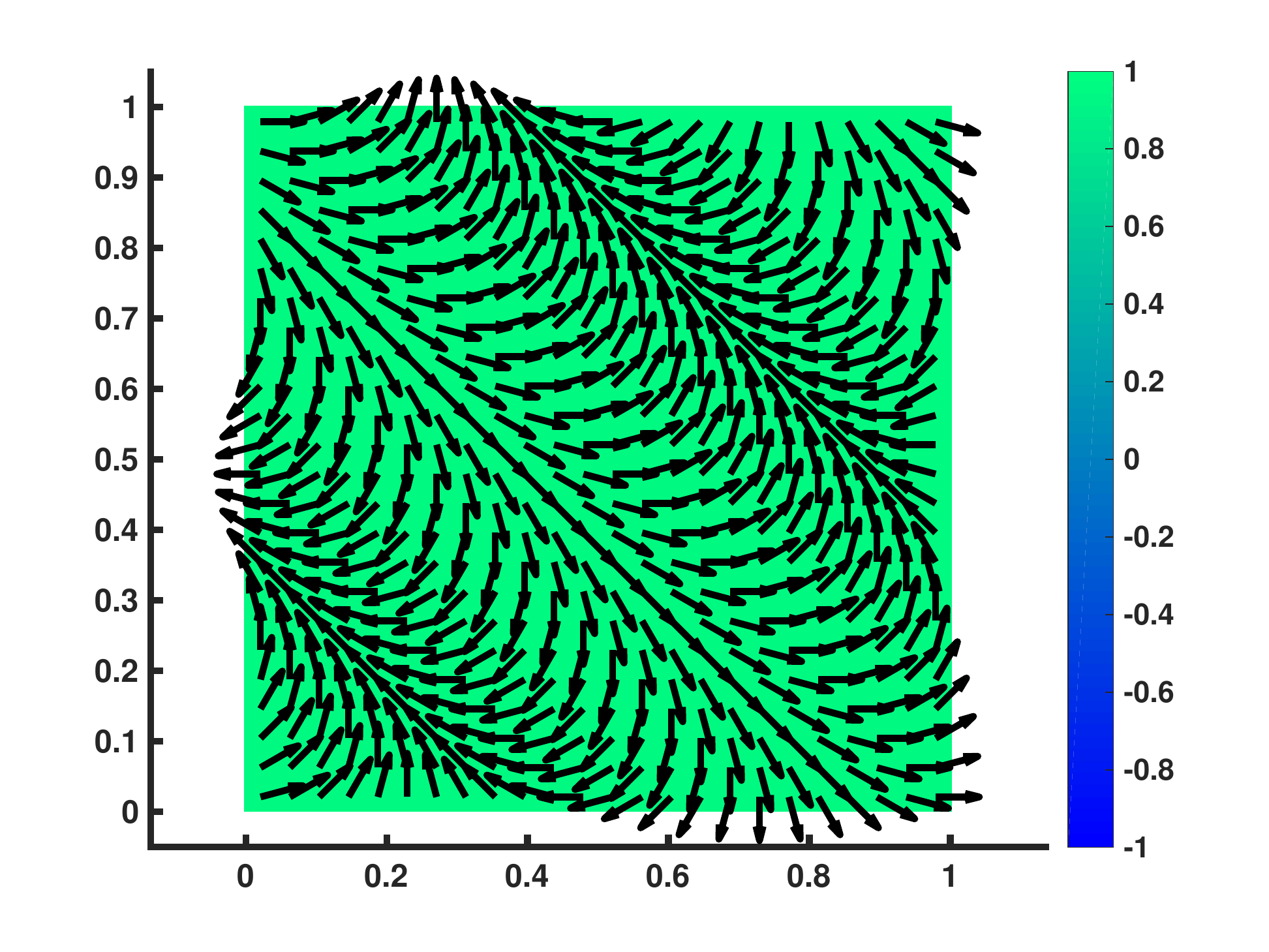}
  \caption{ Analytical solution (\ref{eq:exact}) at time $0$. The vectors in the plot denote the $m_x$ and $m_y$ components, and the color denotes the $m_z$ component. }
  \label{figure:analytical}
\end{center}
\end{figure}
We perform simulation on the time interval $(0, T)$, where $T=0.001$.

Let us consider a square mesh with mesh step $h$ occupying the unit square $\Omega$
and set the time step $k = 8 \times 10^{-7} h^2$.  
We measure errors in the magnetization and its flux in the mesh dependent $L^2$-type norms 
$\|\cdot\|_{\cal Q}$ and $\|\cdot\|_{\cal F}$, respectively.
In addition to that, we consider the maximum norm
$$
  \| \mb{m^h}- \mb{m}^I \|_{L^\infty}
  = \max\limits_{E \in \Omega^h} | \mb{m}_E - \mb{m}^I_E |,
$$
where $\mb{m}^I \in ({\cal Q}^h)^3$ is a projection of the analytical solution on the
discrete space.
For this projection, we simply take the value of $\mb{m}$ at centroids of elements $E$.
The projection $\mb{p}^I$ is defined in a similar way.
Table~\ref{table:mimetic} shows convergence rates in different norms.
Observe that the explicit scheme leads to the second-order convergence for both 
the magnetization and its flux.

\begin{table}[h]
\centering
\begin{tabular}{c| c| c|c}
  $1/h$  &  $\| \mb{m}^h - \mb{m}^I\|_{L^\infty}$ & 
            $\| \mb{m}^h - \mb{m}^I\|_{\cal Q}$ & 
            $\| \mb{p}^h - \mb{p}^I\|_{\cal F}$ \\
  \hline
   32  &   8.222e-05  &  8.360e-05    &	2.967e-03 \\
   64  &   2.060e-05  &  2.092e-05    &  7.418e-04 \\
  128  &   5.154e-06  &  5.231e-06   &  1.854e-04   \\
  256  &   1.289e-06  &  1.308e-06	  &  4.636e-05  \\
  \hline
  rate &  2.00         &  2.00   &	2.00  \\
\end{tabular}
\caption{Convergence analysis of the explicit time integration scheme on uniform square meshes.}
\label{table:mimetic}
\end{table}

%\begin{figure}[h]
%\begin{center}
%  \includegraphics[width=.48\linewidth,trim=5 0 40 20,clip]{mimetic}
%  \includegraphics[width=.48\linewidth,trim=5 0 40 20,clip]{mimeticflux}
%  \label{figure:mimetic}
%  \caption{Error plot of $\left\Vert \mb{m^h}- \mb{m} \right\Vert_{L^\infty}$ 
%  and $\left\Vert \mb{m^h}- \mb{m} \right\Vert_{L^2}$  (left) and
%  $\left\Vert \mb{p^h}- \mb{p} \right\Vert_{L^2}$ (right) with respect to the 
% mesh size $h$ using explicit time integration scheme on uniform square mesh. }
%\end{center}
%\end{figure}

%%%%%%%%%%%%%%%%%%%%%%%%%%%%%%%%%%%%%%%%%%%%%%%%%%%%%%%%%%%%%%%%%%%%%
\subsection{Implicit time integration scheme ($\theta = 1$)}
\subsubsection{Uniform square meshes}
\label{subsec:square}

In this subsection, we consider the implicit time integration scheme, i.e. $\theta=1$ in \eqref{eq:dmdt}.
The analytical solution is given by (\ref{eq:exact}). 
We set the time step $k = 0.008\, h^2$ so that the first-order time integration error
will not affect our conclusions.
Convergence rates shown in Table~\ref{table:mixedstructrued} indicate
the second-order convergence for the magnetization and the first-order convergence for its flux.
A rigorous convergence analysis of Algorithm~1, which is beyond the scope of this work, 
is required to explain lack of flux super-convergence in this scheme.

\begin{table}[h]
\centering
\begin{tabular}{c| c| c|c}
  $1/h$  &  $\| \mb{m}^h - \mb{m}^I\|_{L^\infty}$ & 
            $\| \mb{m}^h - \mb{m}^I\|_{\cal Q}$ & 
            $\| \mb{p}^h - \mb{p}^I\|_{\cal F}$ \\
  \hline
   32 &   9.082e-05   &  9.195e-05  	&	2.531e-02 \\
   64 &   2.273e-05   &  2.302e-05   	&	1.261e-02 \\
  128 &   5.687e-06   &  5.756e-06   	&	6.301e-03 \\
  256 &   1.422e-06   &  1.439e-06   	&	3.150e-03 \\
  \hline
  rate & 2.00          &  2.00  			&1.00 \\
\end{tabular}
\caption{Convergence analysis of the implicit time integration scheme on uniform square meshes.}
\label{table:mixedstructrued}
\end{table}

%\begin{figure}[h]
%\begin{center}
%  \includegraphics[width=.48\linewidth,trim=5 0 40 20,clip]{mixedstructured}
%  \includegraphics[width=.48\linewidth,trim=5 0 40 20,clip]{mixedstructuredflux}
%  \label{figure:mixedstructured}
%  \caption{Error plot of $\left\Vert \mb{m^h}- \mb{m} \right\Vert_{L^\infty}$ and 
%  $\left\Vert \mb{m^h}- \mb{m} \right\Vert_{L^2}$  (left) and
%  $\left\Vert \mb{p^h}- \mb{p} \right\Vert_{L^2}$ (right) with respect to the mesh 
%  size $h$ using Algorithm 1 on uniform square mesh.}
%\end{center}
%\end{figure}

%%%%%%%%%%%%%%%%%%%%%%%%%%%%%%%%%%%%%%%%%%%%%%%%%%%%%%%%%%%%%%%%%%%%%
\subsubsection{Smoothly distorted and randomized quadrilateral meshes}

In this subsection, we continue convergence analysis of the implicit time integration scheme
using the analytical solution described above.
This time, we consider the randomized and smoothly distorted meshes shown in Fig.~\ref{figure:distorted}. 
The randomized mesh is built from the uniform square mesh by a random distortion of its nodes:
\begin{equation}
  x := x+ 0.2\,\xi_x\, h, \quad y := x+ 0.2\,\xi_y\,h,
\end{equation}
where $\xi_x$ and $\xi_y$ are random variables between $-1$ and $1$.
The mesh perturbation was modified on the boundary so that periodic boundary conditions could be used.

The smoothly distorted mesh is built from the uniform square mesh using a smooth map to calculate new 
positions of mesh nodes:
\begin{equation}
\begin{aligned}
  x := x + 0.1 \sin (2\pi x) \sin(2\pi y), \\
  y := y + 0.1 \sin (2\pi x) \sin(2\pi y).
\end{aligned}
\end{equation}

\begin{figure}[h]
\begin{center}
  \includegraphics[width=.4\linewidth,trim=80 35 50 20,clip]{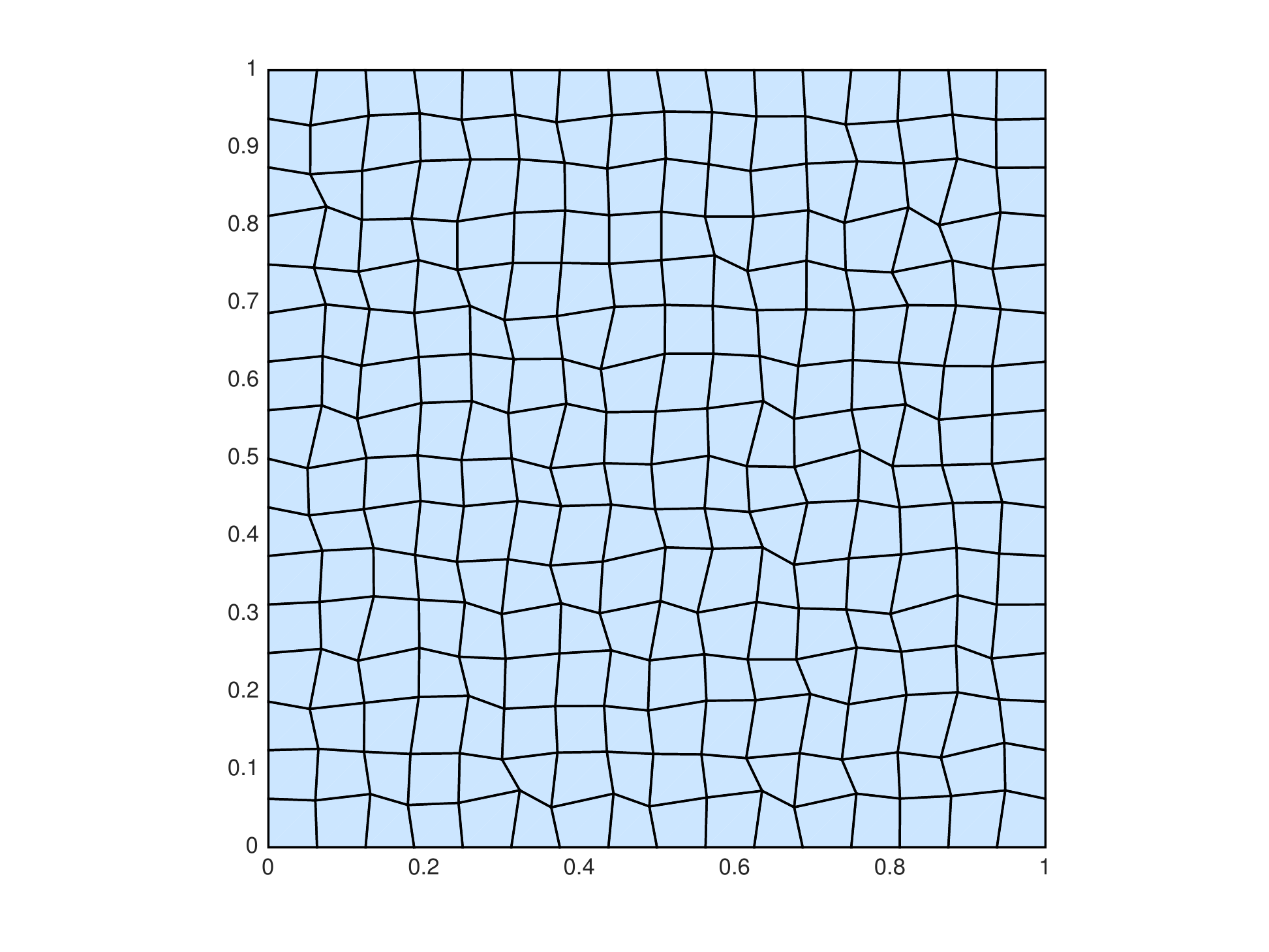}
  \includegraphics[width=.4\linewidth,trim=80 35 50 20,clip]{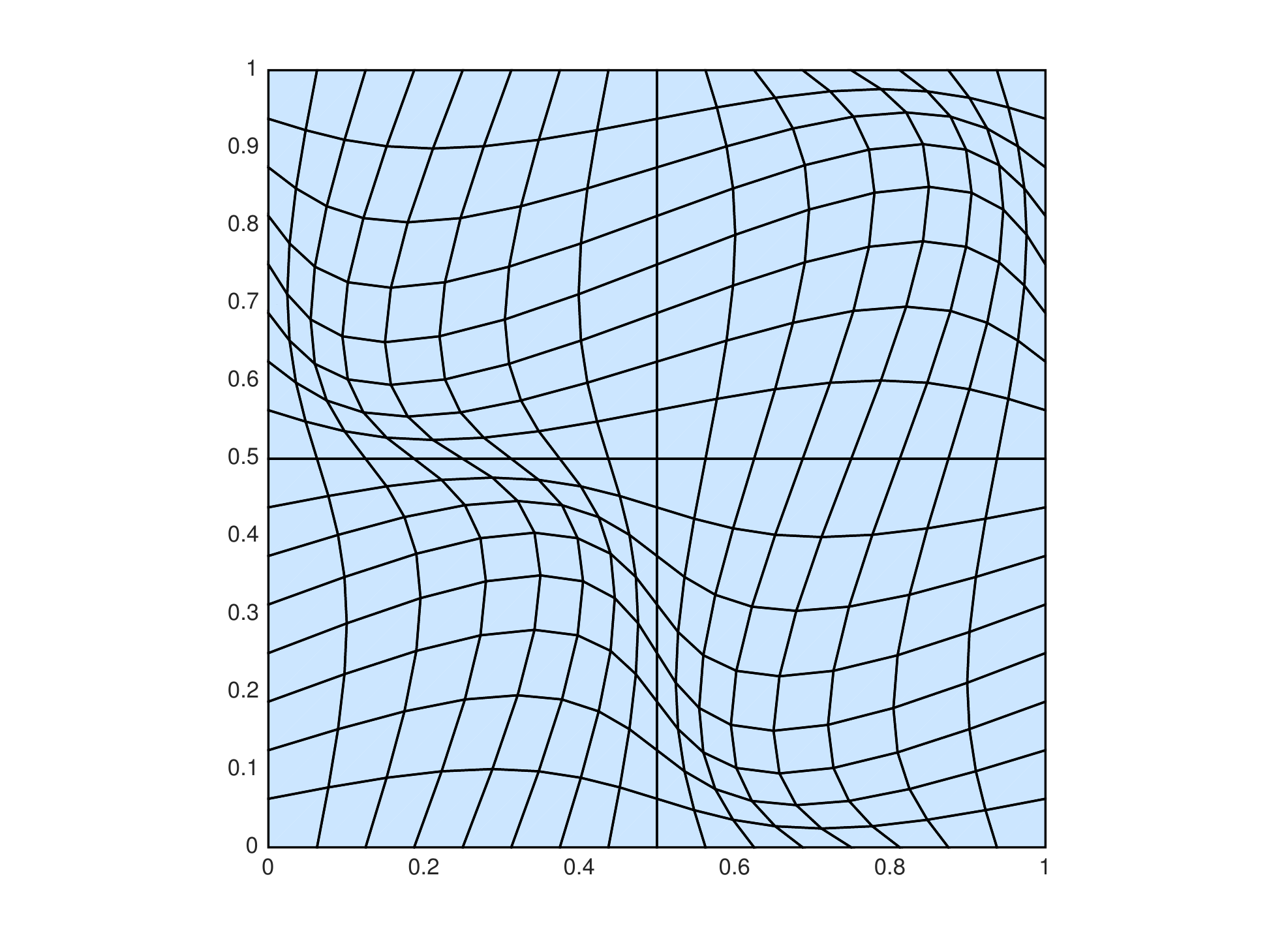}
  \caption{Modified meshes: randomized mesh (left), and smoothly distorted mesh (right).}
  \label{figure:distorted}
\end{center}
\end{figure}
We set the time step $k = 0.008\, h^2$. 
The errors are summarized in Table~\ref{table:mixedrandom} and Fig.~\ref{figure:convrandom}.
Again, we observe the second-order convergence rate for the magnetization 
and the first-order convergence rate for its flux.

\begin{table}[h]
\centering
\begin{tabular}{c| c| c|c||c|c|c}
  & \multicolumn{3}{|c||}{Randomized mesh} & \multicolumn{3}{|c}{Smoothly distorted mesh}\\  
  \hline
  $1/h$ & $\| \mb{m}^h - \mb{m}^I\|_{L^\infty}$ & $\| \mb{m}^h - \mb{m}^I\|_{\cal Q}$ & 
          $\| \mb{p}^h - \mb{p}^I\|_{\cal F}$ &  
          $\| \mb{m}^h - \mb{m}^I\|_{L^\infty}$ & $\| \mb{m}^h - \mb{m}^I\|_{\cal Q}$ & 
          $\| \mb{p}^h - \mb{p}^I\|_{\cal F}$ \\ 
  \hline
   16  &  3.121e-03  &  9.460e-04  & 6.560e-02 &  1.751e-03  &  1.009e-03 & 7.611e-02 \\
   32  &  1.005e-03  &  2.988e-04  & 3.249e-02 &  5.477e-04  &  2.809e-04 & 3.000e-02 \\
   64  &  2.585e-04  &  7.258e-05  & 1.615e-02 &  1.432e-04  &  7.214e-05 & 1.362e-02 \\
  128  &  7.939e-05  &  1.807e-05  & 8.135e-03 &  3.608e-05  &  1.816e-05 & 6.623e-03 \\
  \hline
  rate & 1.79         &  1.92   & 1.00       &  1.87         &  1.94  & 1.17\\
\end{tabular} 
\caption{Convergence analysis of the implicit time integration scheme on distorted meshes.}
\label{table:mixedrandom}
\end{table}

\begin{figure}[h]
\begin{center}
  \includegraphics[width=.49\linewidth,trim=5 -10 20 5,clip]{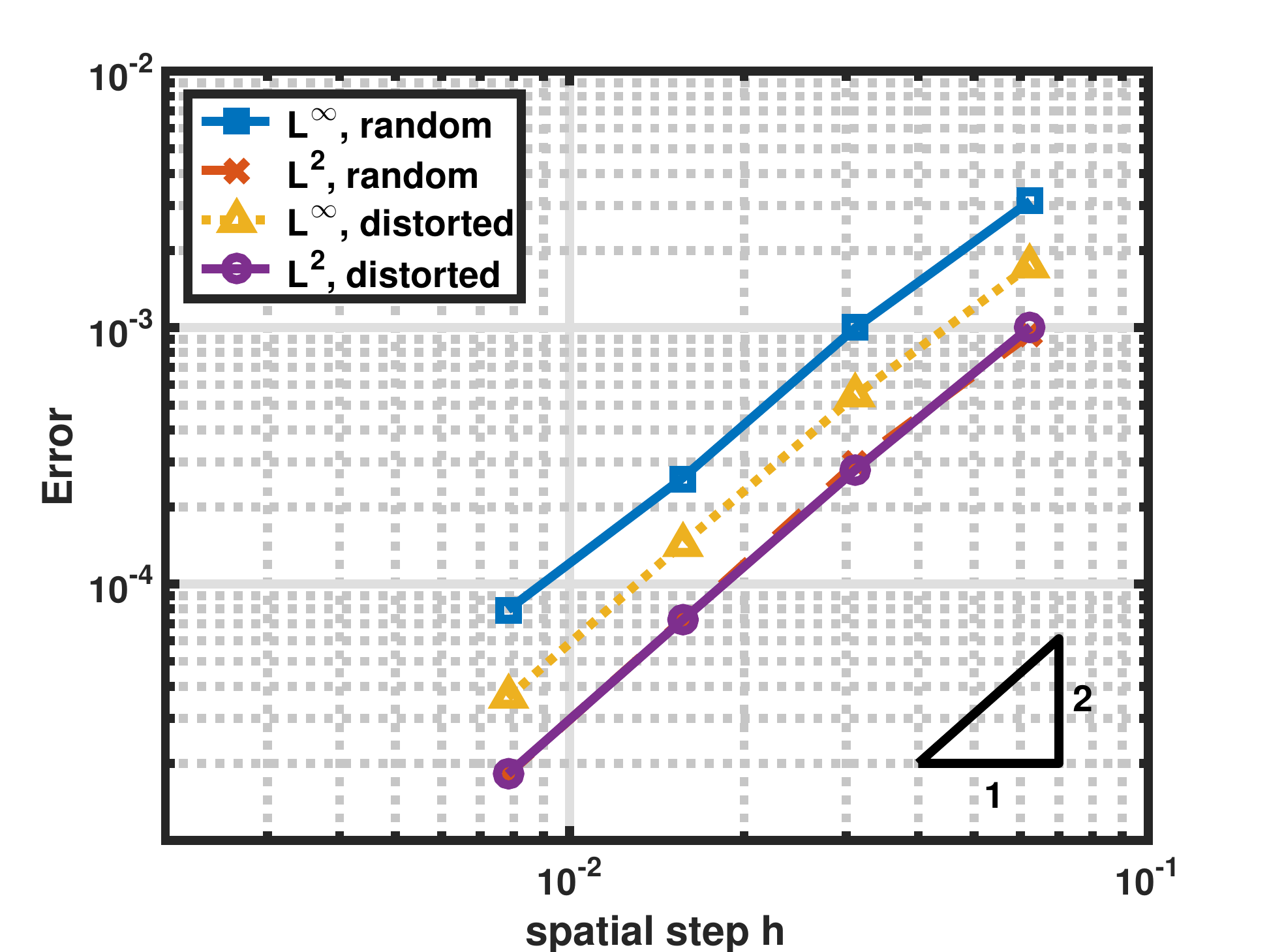}
  \includegraphics[width=.49\linewidth,trim=5 -10 20 5,clip]{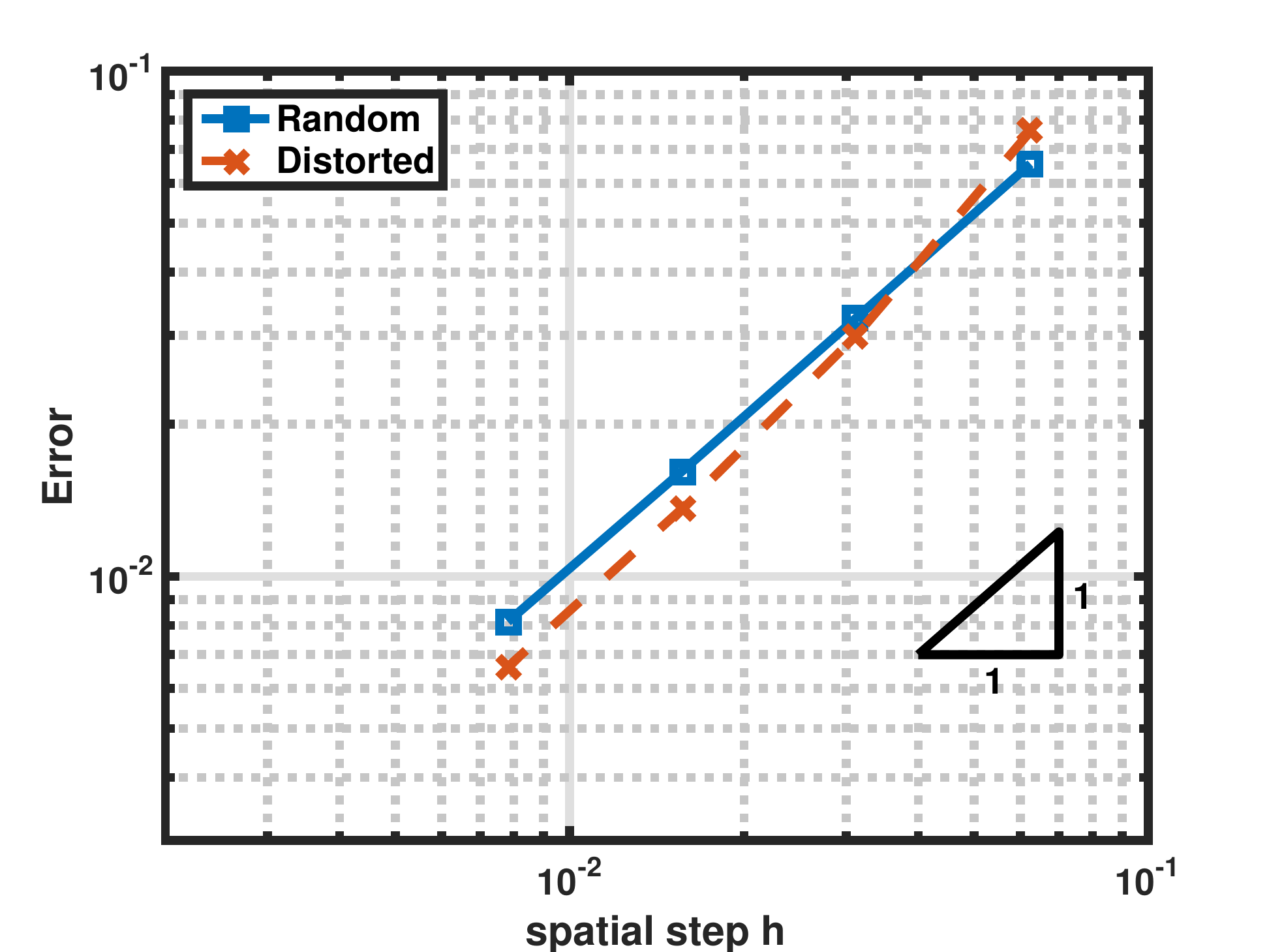}
  \caption{Error plot of $\| \mb{m}^h - \mb{m}^I \|_{L^\infty}$ and $\| \mb{m}^h - \mb{m}^I \|_{\cal Q}$ (left) 
  and $\| \mb{p}^h - \mb{p}^I \|_{\cal F}$ (right) with respect to the mesh size $h$ on 
  the randomized and smoothly distorted meshes shown in Fig.~\ref{figure:distorted}. }
  \label{figure:convrandom}
\end{center}
\end{figure}

In these experiments we used formula \eqref{eq:WFE} with constant $\tilde\gamma$ defined by 
the scaled trace of the first term, so that $\mathbb{M}_{{\cal F},E}^{-1}$ is not always an M-matrix.
Therefore, in Fig.~\ref{figure:energyandom}, we plot the exchange energy $\frac12 [\mb{p}^h,\, \mb{p}^h]_{\cal F}$ 
as the function of time for two different mesh resolutions $h=\frac{1}{32}$ and $h=\frac{1}{64}$
and for both randomized and smoothly distorted meshes. 
The figure shows monotone decrease of the exchange energy in time. 
These results suggest that the M-matrix conditions are sufficient but may not be necessary.

Furthermore, we study the impact of the constant $\tilde\gamma$  in \eqref{eq:WFE} on the solution accuracy.
Fig.~\ref{figure:gamma} shows errors as functions of $\gamma_0$ on a randomly distorted mesh with mesh 
size $h=1/32$, where $ \tilde \gamma  = \gamma_0 \frac{1}{|E|} \text{ trace} (\mathbb{N}\, \mathbb{N}^T)$.
With only one free parameter, we are not able to minimize errors in both, the magnetization and its flux,
and the full matrix of parameter has to be used instead.
Also, note that there is much room for varying $\tilde \gamma$ with relatively small increase of the error.

\begin{figure}[h]
\begin{center}
  \includegraphics[width=.6\linewidth,trim=0 0 0 0,,clip]{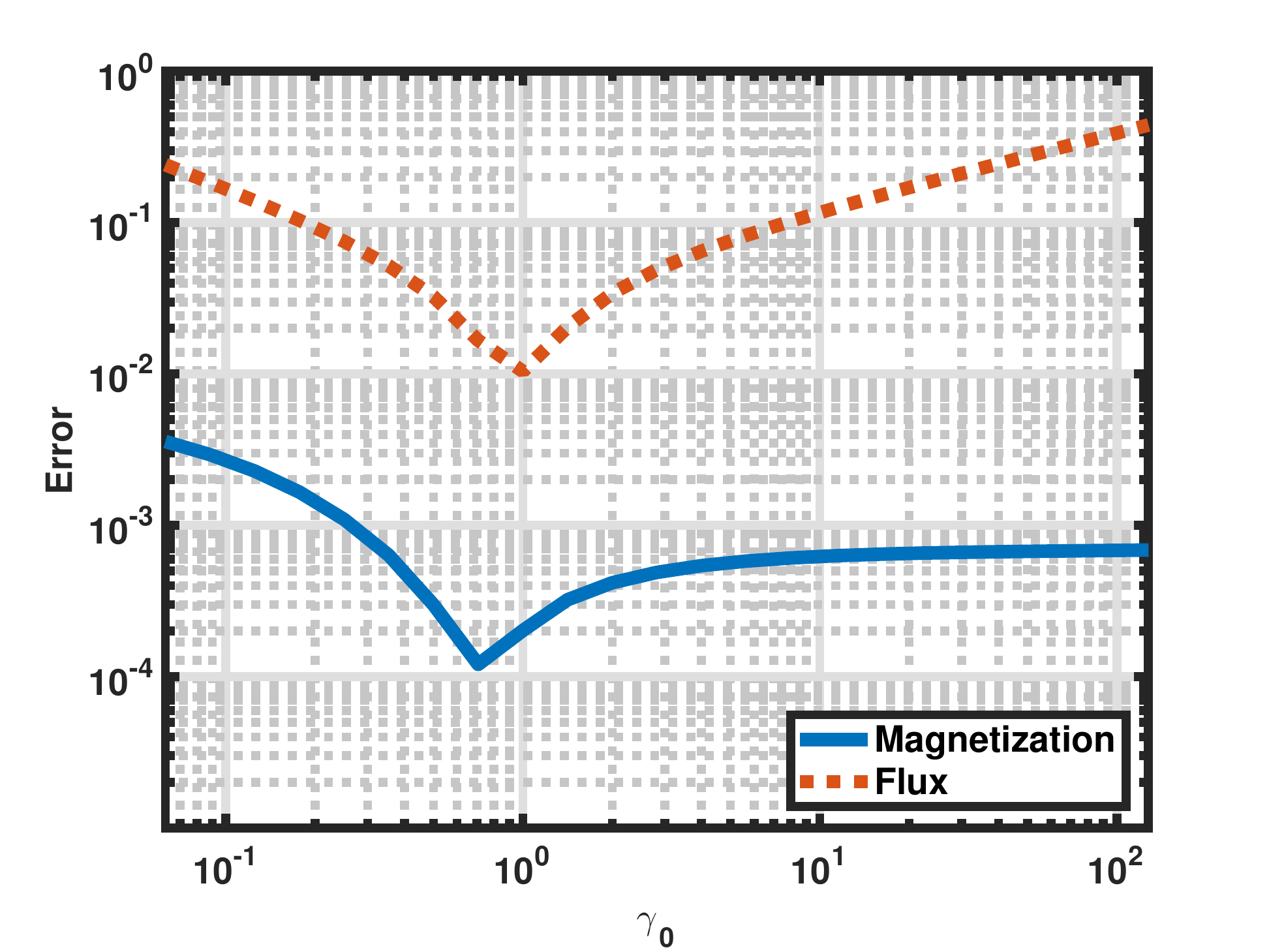}
\caption{Errors as function of $\gamma_0$ on a randomly distorted mesh with mesh size $h=1/32$ 
         (Fig.~\ref{figure:distorted} left)  }
\label{figure:gamma}
\end{center}
\end{figure}

\begin{figure}[h]
\begin{center}
  \includegraphics[width=.49\linewidth,trim=5 0 40 10,,clip]{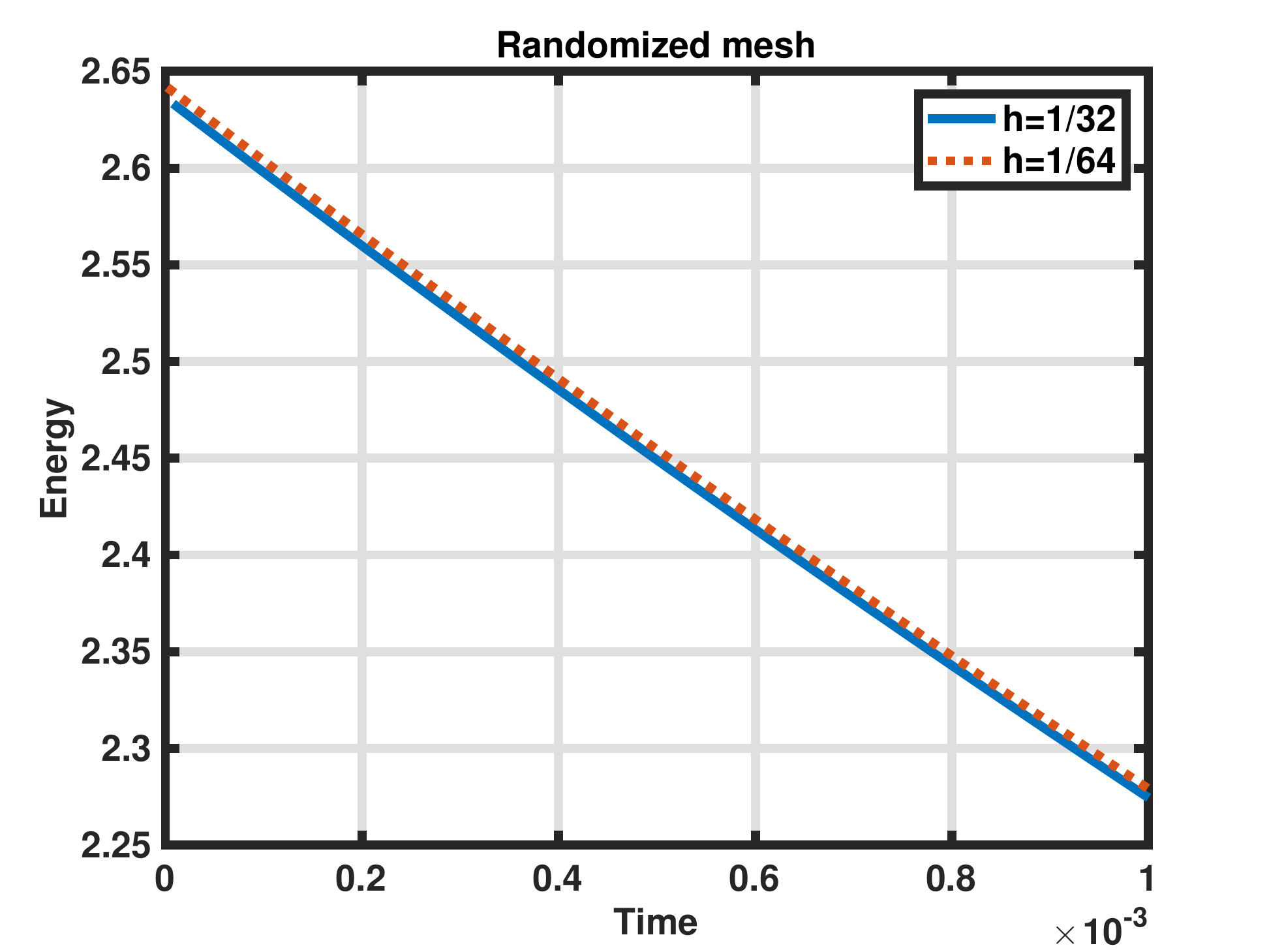}
  \includegraphics[width=.49\linewidth,trim=5 0 40 10,,clip]{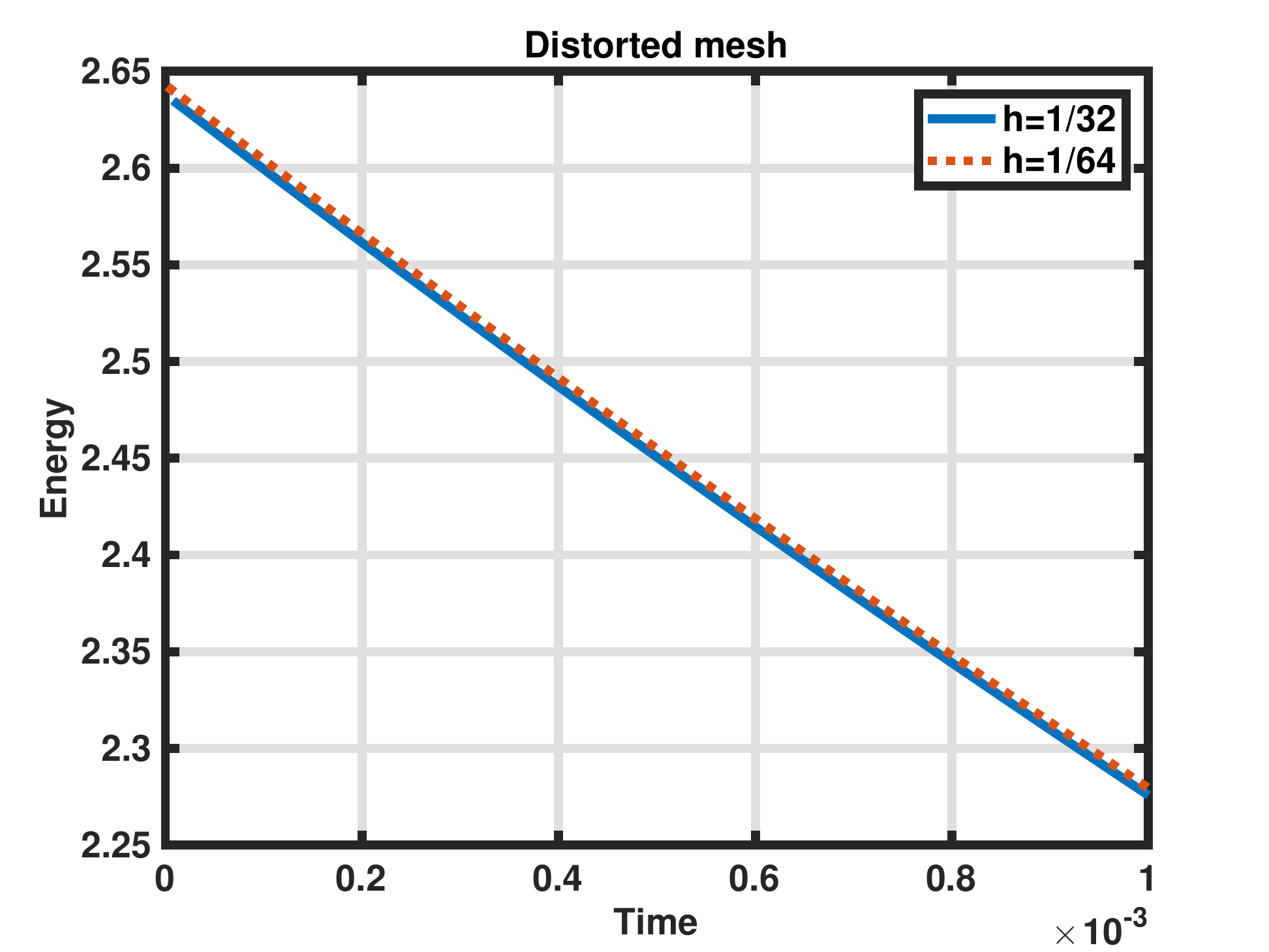}
\caption{Decrease of the exchange energy on randomized (left) and smoothly distorted (right) meshes.}
\label{figure:energyandom}
\end{center}
\end{figure}

%%%%%%%%%%%%%%%%%%%%%%%%%%%%%%%%%%%%%%%%%%%%%%%%%%%%%%%%%%%%%%%%%%%%%
\subsubsection{Convergence analysis for problems with the Dirichlet boundary condition}

In this subsection, we conduct numerical experiments for the LL equation 
with the effective field (\ref{eq:LLenergy}) $\bm{h}= \Delta \mb{m}$ using Algorithm~1
and the implicit time integration scheme.
We consider the analytical solution (\ref{eq:exact}) but now with the Dirichlet boundary condition.
The Dirichlet boundary condition allows us to consider more general domains as the circular
domain with center $(0.5, 0.5)$ and radius $0.5$ shown on the left panel in Fig.~\ref{figure:mixedstructured}.
This figure shows also a logically square mesh fitted to the domain.
This mesh has four elements (corresponding to four corners of the original square mesh) that
are almost triangles. 
However, for the mimetic framework, such elements are classified as shape regular elements 
(see \cite{brezzi2005family} for more detail) and do not alter the convergence rates.

\begin{figure}[h]
\begin{center}
  \includegraphics[width=.40\linewidth,trim=60 30 50 20,clip]{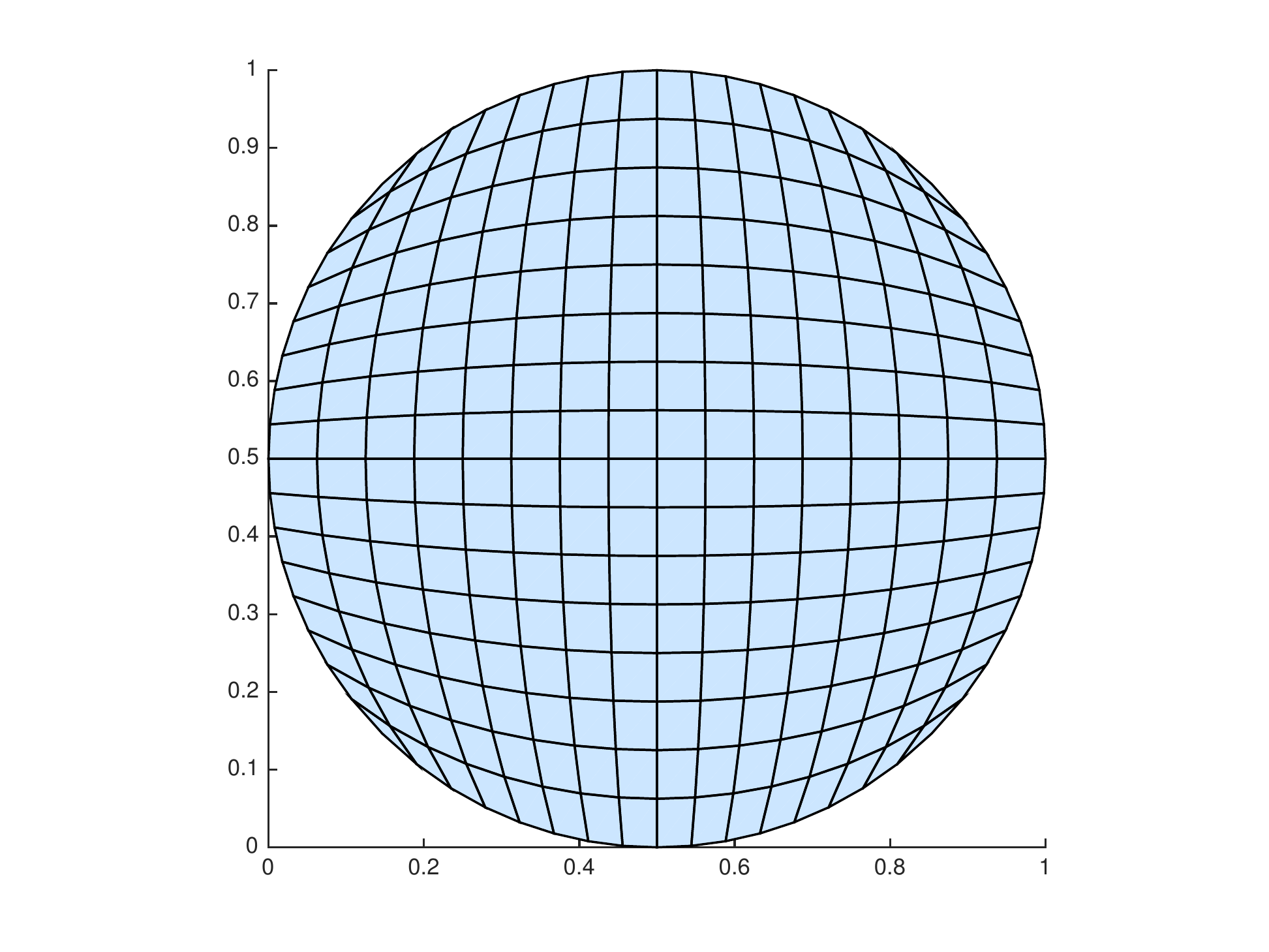}\quad
  \includegraphics[width=.40\linewidth,trim=60 30 50 20,clip]{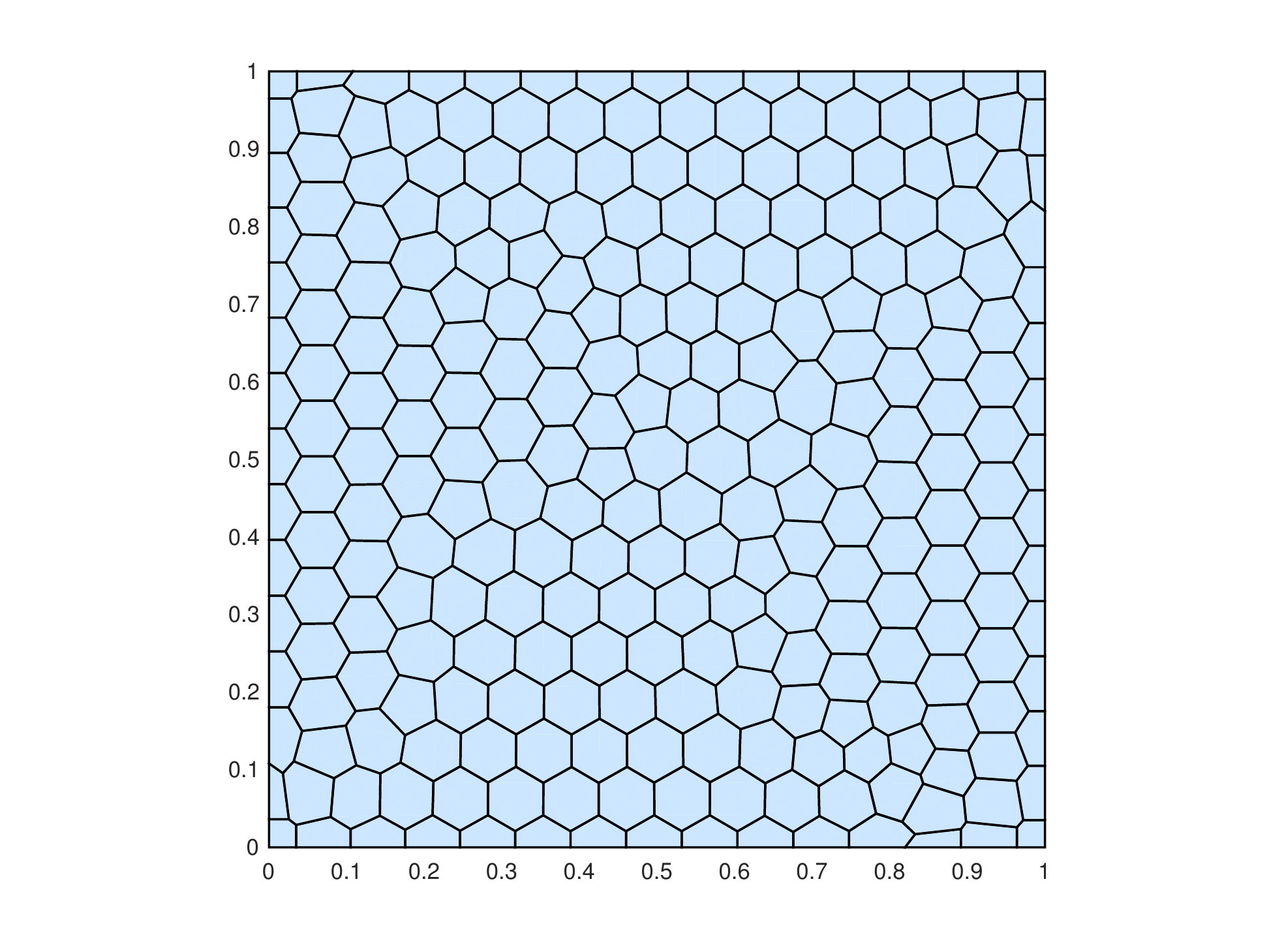}
\caption{Logically square mesh fitted to the circular domain (left panel) and polygonal mesh (right panel).}
\label{figure:mixedstructured}
\end{center}
\end{figure}

The computed errors are summarized in Table~\ref{table:dirichlet} and Fig.~\ref{figure:dirichlet},
where we present convergence results for the unit square, circular domain with various meshes. 
In addition to distorted meshes described in the previous subsection, we 
conduct numerical experiments on a sequence of polygonal meshes as in Fig.~\ref{figure:mixedstructured}.
The time step is $k = 0.008 \,h^2$ for all meshes but polygonal ones where it is set to $k = 0.004 \,h^2$.
We observe again the second-order convergence rate for the magnetization and the first-order 
convergence rate for its flux.

\begin{table}[h]
\centering
\begin{tabular}{c|c|c|c|c|c|c}
  & \multicolumn{6}{c}{Uniform square meshes}  \\  
  \hline
  $1/h$ & $\| \mb{m}^h - \mb{m}^I\|_{L^\infty}$ & ratio & 
          $\| \mb{m}^h - \mb{m}^I\|_{\cal Q}$   & ratio &
          $\| \mb{p}^h - \mb{p}^I\|_{\cal F}$   & ratio  \\
  \hline
   8   &   6.799e-03 & 0.83&  3.239e-03  & 0.83  &1.585e-01 	& 1.40 \\
  16   &   3.812e-03 & 1.81&  1.347e-03  & 1.73  &6.021e-02 	 &1.19   \\
  32   &   1.088e-03 & 2.00&  4.061e-04  & 1.98  &2.639e-02 	 &1.05  \\
  64   &   2.717e-04 &     &  1.028e-04  &       &1.275e-02  &   \\
  %\hline
  %Slope &   1.574504 &   1.666216 &   1.467853 &   1.598312  \\
  \hline
  &\multicolumn{6}{c}{Randomized meshes} \\  
  \hline
  %$\frac{1}{h}$ & $||m-m^h||_{L^\infty}$ & Slope & $|| m-m^h||_{L^2}$ 
  %                                       & Slope &  $|| p-p^h||_{L^2}$ & Slope  \\
   8   &   6.889e-03  &0.89  &  3.335e-03  & 1.09 	& 1.795e-01 	 &1.31 \\
  16   &  3.722e-03  &1.62  &  1.566e-03  & 1.67	&7.254e-02 	 &1.12  \\
  32   &   1.212e-03  &1.85  &  4.921e-04  & 2.01	&3.348e-02 	& 1.04 \\
  64   &   3.370e-04  &      &  1.221e-04  &  		&1.630e-02  &   \\
  \hline
  & \multicolumn{6}{c}{Smoothly distorted meshes} \\  
  \hline
  %$\frac{1}{h}$   &  $|| m-m^h||_{L^\infty}$&Slope & $|| m-m^h||_{L^2}$  & Slope &  $|| p-p^h||_{L^2}$  & Slope\\
  %\hline
  8   &  6.451e-03 & 0.57&   3.562e-03   & 1.20  & 2.522e-01 	& 1.52 	\\
  16  &  4.349e-03  & 1.49&   1.550e-03    & 1.52   &  8.808e-02 &1.45 	\\
  32  &  1.551e-03  &1.61&   5.414e-04   & 1.91  &  3.220e-02 	& 1.21	\\
  64  &   5.073e-04  &       &1.440e-04     &     &  1.393e-02 	&	\\
  \hline
  &\multicolumn{6}{c}{Polygonal meshes}  \\  
  \hline
 8  &	 7.071e-03  &	 0.72  &	 4.029e-03  &	 1.42  &	 2.301e-01  &	 1.32 \\ 
 16  &	 4.288e-03  &	 1.40  &	 1.506e-03  &	 1.60  &	 9.242e-02  &	 1.28 \\ 
 32  &	 1.623e-03  &	 1.71  &	 4.970e-04  &	 1.81  &	 3.794e-02  &	 1.10 \\ 
 64  &	 4.957e-04  &	 1.85  &	 1.422e-04  &	 2.28  &	 1.765e-02  &	 1.06 \\ 
 128  &	 1.372e-04  &	       &	 2.929e-05  &	       &	 8.495e-03  &	       \\
  \hline
  &\multicolumn{6}{c}{Logically square meshes in the circular domain}  \\  
  \hline
  %$\frac{1}{h}$   &  $|| m-m^h||_{L^\infty}$&Slope & $|| m-m^h||_{L^2}$  & Slope & $|| p-p^h||_{L^2}$  & Slope\\
  %\hline
  8  &  2.388e-02  & 1.68 &  3.689e-03  & 1.66	&  1.293e-01 	& 1.35  \\
  16 &  7.451e-03  & 1.87 &  1.166e-03  & 1.90  &  5.086e-02 	& 1.11 	\\
  32 &  2.032e-03  & 1.95 &  3.120e-04  & 1.99	&  2.354e-02 	& 1.02 	\\
  64 &  5.268e-04  &      &  7.856e-05  &	&  1.159e-02	& \\
\end{tabular}
\caption{Convergence analysis of Algorithm 1 for the case of Dirichlet boundary conditions.}
\label{table:dirichlet}
\end{table}

\begin{figure}[h]
  \includegraphics[width=.495\linewidth,trim=5 0 40 20,clip]{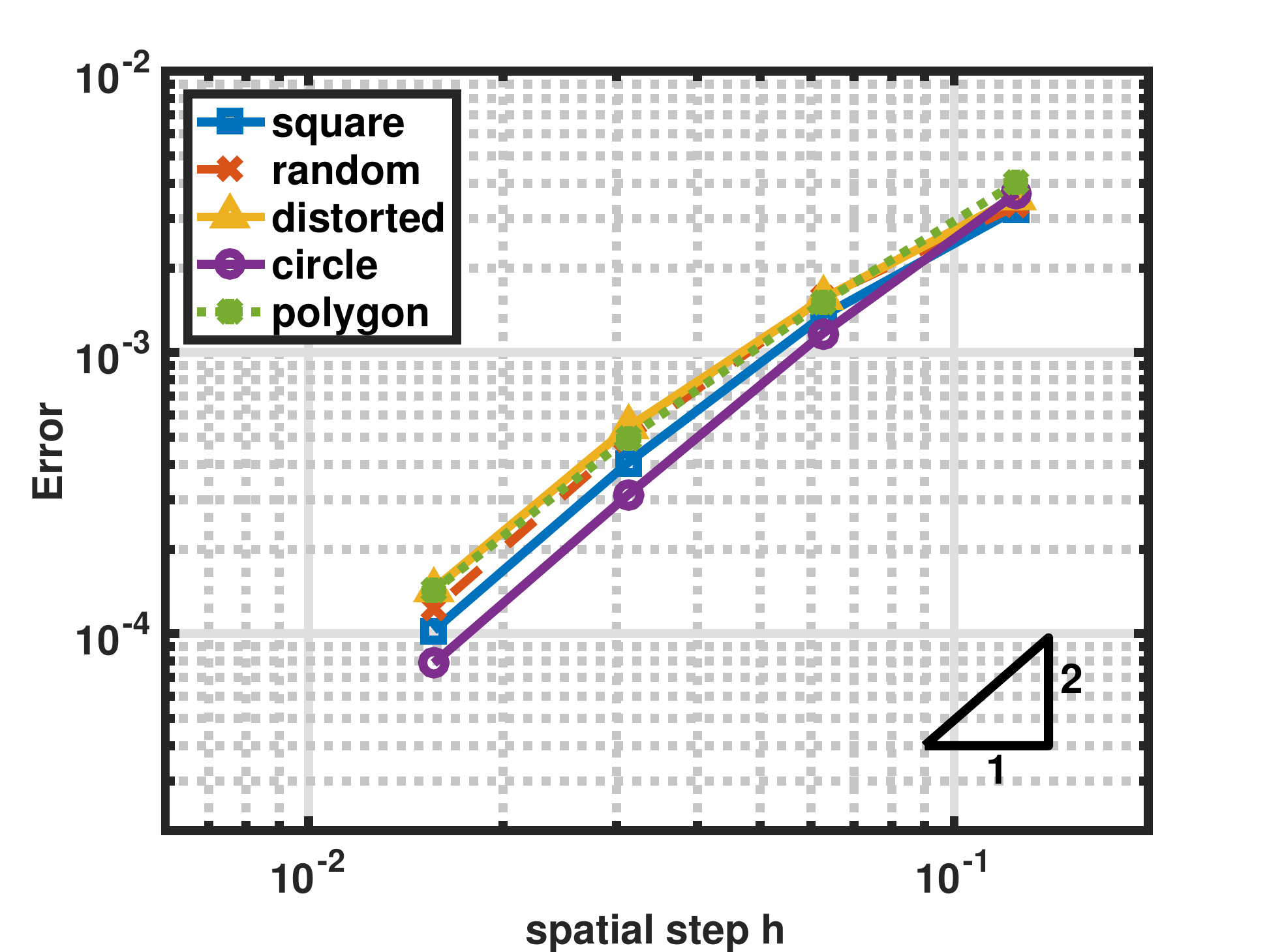}
  \includegraphics[width=.495\linewidth,trim=5 0 40 20,clip]{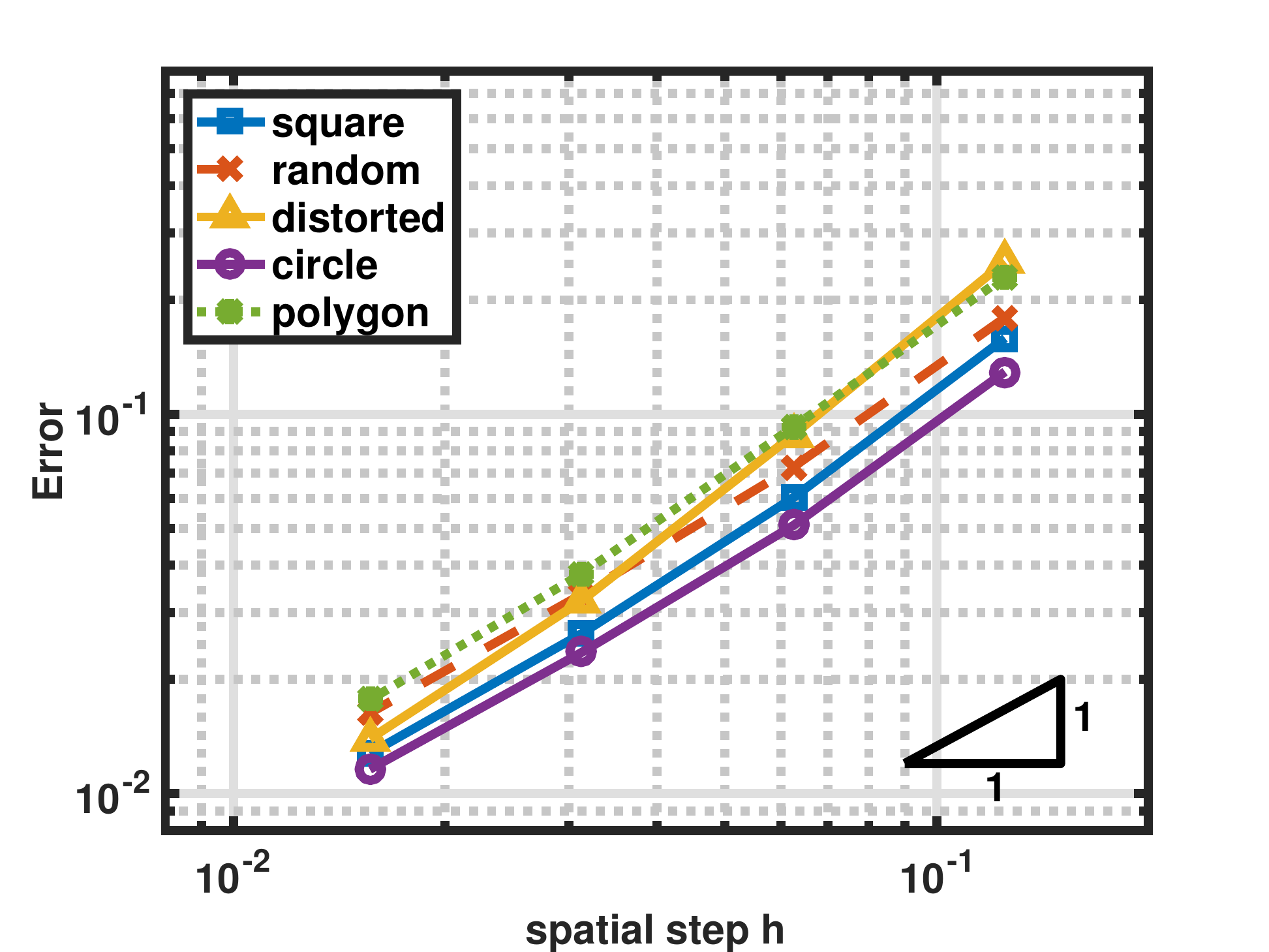}
  \caption{Error plot of $\| \mb{m}^h - \mb{m}^I \|_{\cal Q}$ (left) and 
  $\| \mb{p}^h - \mb{p}^I \|_{\cal F}$ (right) with respect to the mesh size $h$ 
  for the problem with the Dirichlet boundary conditions.}
  \label{figure:dirichlet}
\end{figure}

%\begin{figure}[h]
%  \includegraphics[width=.495\linewidth,trim=5 0 40 20,clip]{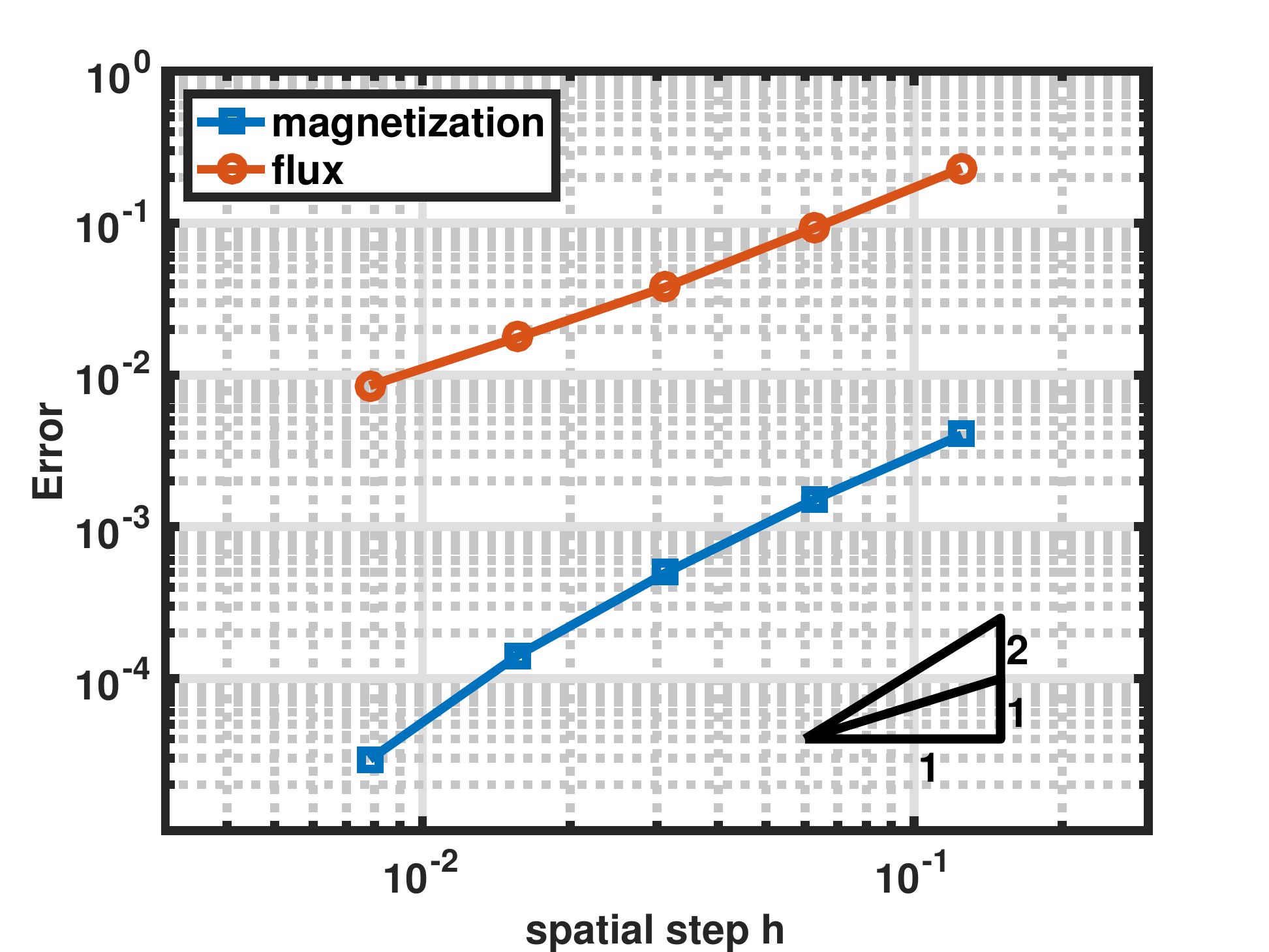}
%  \caption{Error plot of $\| \mb{m}^h - \mb{m}^I \|_{\cal Q}$  and 
%  $\| \mb{p}^h - \mb{p}^I \|_{\cal F}$ with respect to the mesh size $h$ 
%  for general polygonal domain (Fig.~\ref{figure:meshpoly}) }
%  \label{figure:poly}
%\end{figure}

%%%%%%%%%%%%%%%%%%%%%%%%%%%%%%%%%%%%%%%%%%%%%%%%%%%%%%%%%%%%%%%%%%%%%
\clearpage
\subsection{NIST micromag standard problem 4}
\label{subsec:nist}

The micromag standard problem 4 simulates the magnetization dynamics in 
a permalloy thin film with two different applied fields \cite{nist4}. 
The thin film has dimensions $500 \text{nm} \times 125\text{nm} \times 3\text{nm}$.
Before its nondimensionalization, the LL equation \cite{miltat2007numerical} reads
\begin{equation}\label{eq:LLnist}
  \Frac{\partial \mb{M}}{\partial t'} 
  = -\frac{\gamma}{1+\alpha^2} \mb{M} \times \mb{H} 
    -\frac{ \gamma \alpha}{M_s(1+\alpha^2)} \mb{M} \times (\mb{M} \times \mb{H})
\end{equation}
subject to the homogeneous Neumann boundary condition on $\partial \Omega$ and 
the initial condition described below.
Here
\begin{equation}\label{eq:num:H}
  \mb{H} = -\frac{1}{\mu_0 M_s} \der{\mathcal{E}}{\mb{m}},
  \quad
  \mathcal{E}(\mb{M}) = \int_\Omega \frac{A}{2M_s^2} |\nabla \mb{M}|^2 
                      - \mu_0(\mb{H}_{e} \cdot \mb{M}) 
                      - \frac12 \mu_0 (\mb{H}_s \cdot \mb{M}) \dx,
\end{equation}
where $\mb{M}=M_s \mb{m}$,
$\mb{H}_e$ is the external field and $\mb{H}_s$ is the stray field.  
Other parameters are the exchange constant $A = 2.6 \times 10^{-11}\, [J \cdot m^{-1}$], 
saturation magnetization $M_s= 8 \times 10^5\, [A \cdot m^{-1}]$, 
gyromagnetic ratio $\gamma = 2.21\times 10^{5}\, [m \cdot A^{-1} \cdot s^{-1}]$,  
magnetic permeability of vacuum $\mu_0 = 4 \pi \times 10^{-7}\, [N \cdot A^{-2}]$ and 
the dimensionless damping parameter $\alpha = 0.02$.  

By rescaling $\mb{H}= M_s \mb{h}$, $\mb{H}_s = M_s \mb{h}_s$, $\mb{H}_e=M_s \mb{h}_e$,
$x=Lx'$ with $L=10^{-9}$, and $t= \frac{1+\alpha^2}{\gamma M_s} t'$, we get equation
(\ref{eq:LL}) with $\eta =\frac{A}{\mu_0 M_s^2 L^2}$ and equation (\ref{eq:LLenergy}) with $Q=0$.
The initial state is an equilibrium S-state as in Fig.~\ref{figure:S} which is obtained by 
applying an external field of $2 \,T$ along direction $[1,1,1]$ and slowly reducing it to 
zero by $0.02 \,T$ each time step \cite{nist4,kritsikis2014beyond}. 

For a thin film, we may assume that the magnetization is constant in the vertical direction
and solve the two-dimensional LL equation.
Let us consider a $100 \times 25$ rectangular mesh of square cells with size $h_x = h_y = 5 \text{nm}$. 
We use the explicit time integration scheme with time step  
$\hat k=\frac{0.005}{\gamma M_s}\approx 28.28 \text{ fs}$ and implicit time discretization scheme
with five different time steps  $\hat k=\frac{0.01}{\gamma M_s}\approx 56.56 \text{ fs}$, 
$\hat k\approx 0.14 \text{ ps}$, $\hat k\approx 0.28 \text{ ps}$, $\hat k\approx 0.57 \text{ ps}$, and $\hat k\approx 1.13 \text{ ps}$.
\begin{figure}[h]
\centering
  \includegraphics[width=.62\linewidth,trim=20 200 15 10,clip]{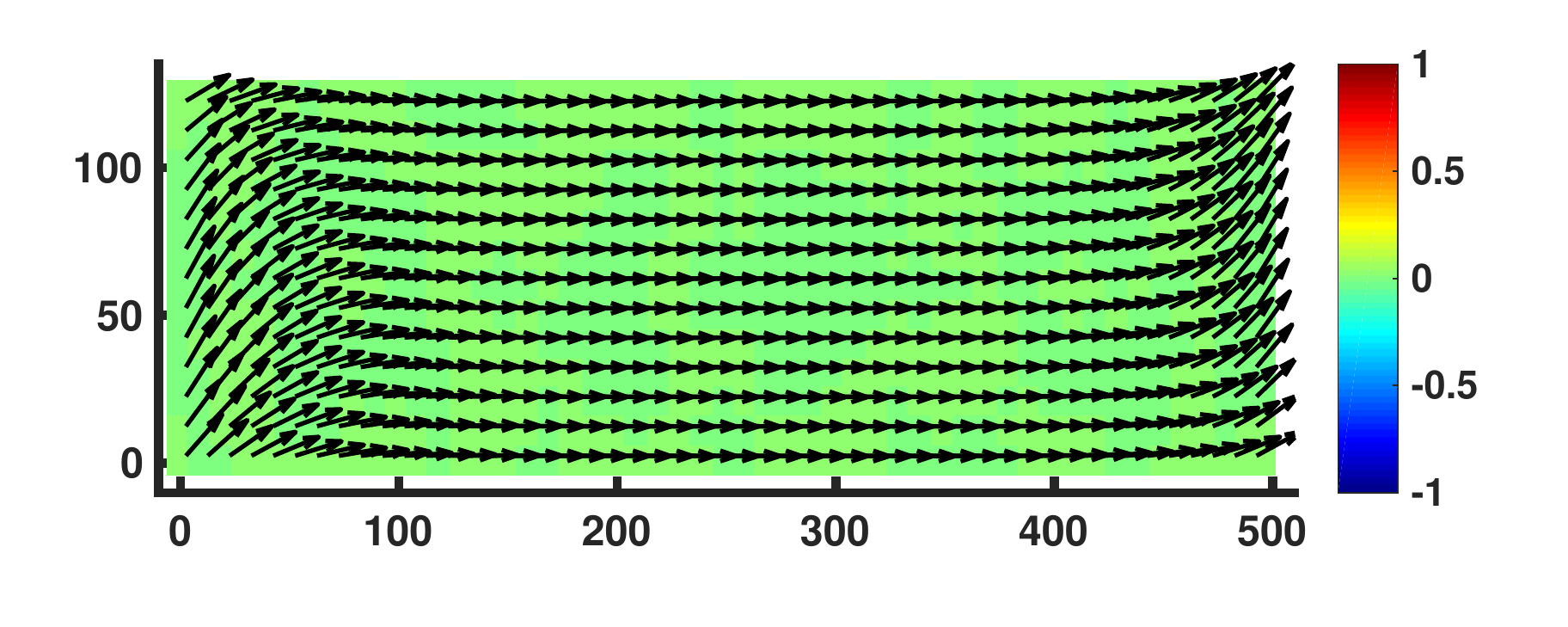}
\caption{NIST 4th problem: the initial equilibrium S-state. The vectors in the plot denote the $m_x$ and $m_y$ components, and the color denotes the $m_z$ component.}
\label{figure:S}
\end{figure}
\begin{figure}[h]
  \includegraphics[width=.495\linewidth,trim=15 5 50 10,clip]{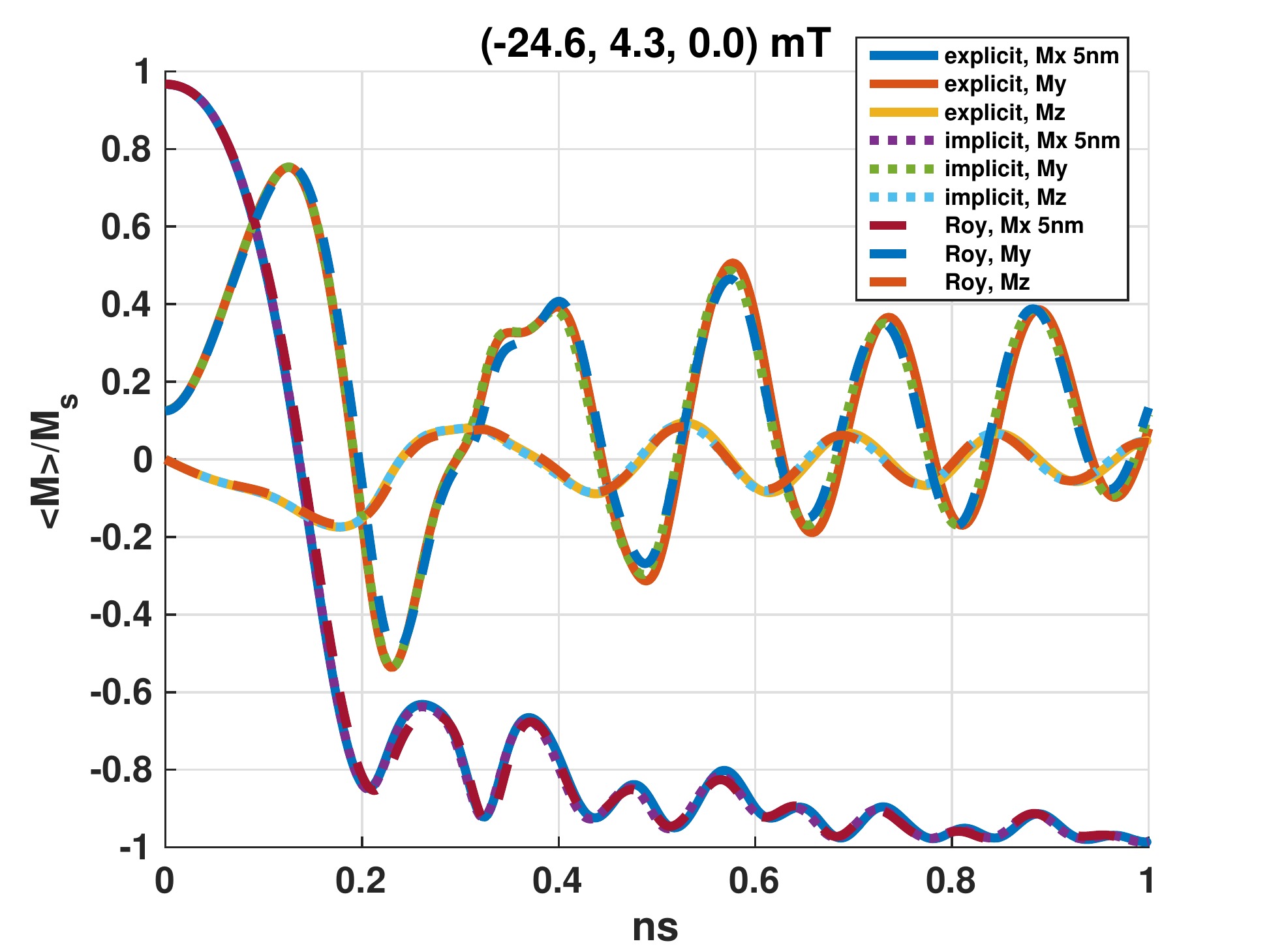}
  \includegraphics[width=.495\linewidth,trim=20 5 50 10,clip]{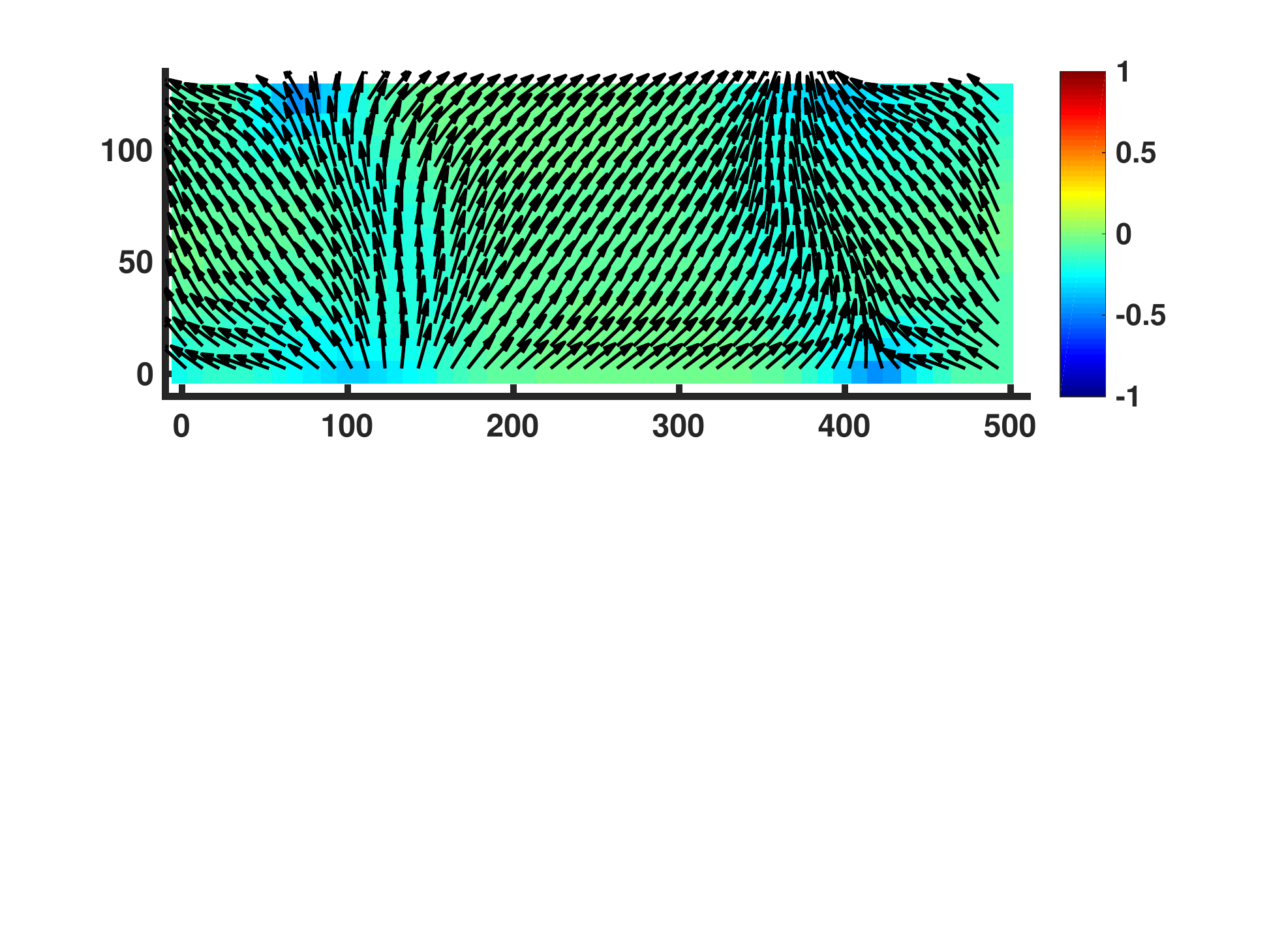}
\caption{NIST 4th problem with the external field $\mu_0\,\mb{H}_e = (-24.5,\, 4.3,\, 0.0)\, [mT]$. 
Left panel compares the time evolution of the average magnetization calculated using the explicit and implicit mimetic schemes 
with Roy and Svedlindh's results in \cite{nist4}. Right panel shows the magnetization field when $\left<m_x\right>$ first crosses zero. The vectors in the plot denote the $m_x$ and $m_y$ components, and the color denotes the $m_z$ component.}
\label{figure:field1}
\end{figure}

\begin{figure}[h]
  \includegraphics[width=.495\linewidth,trim=15 5 50 10,clip]{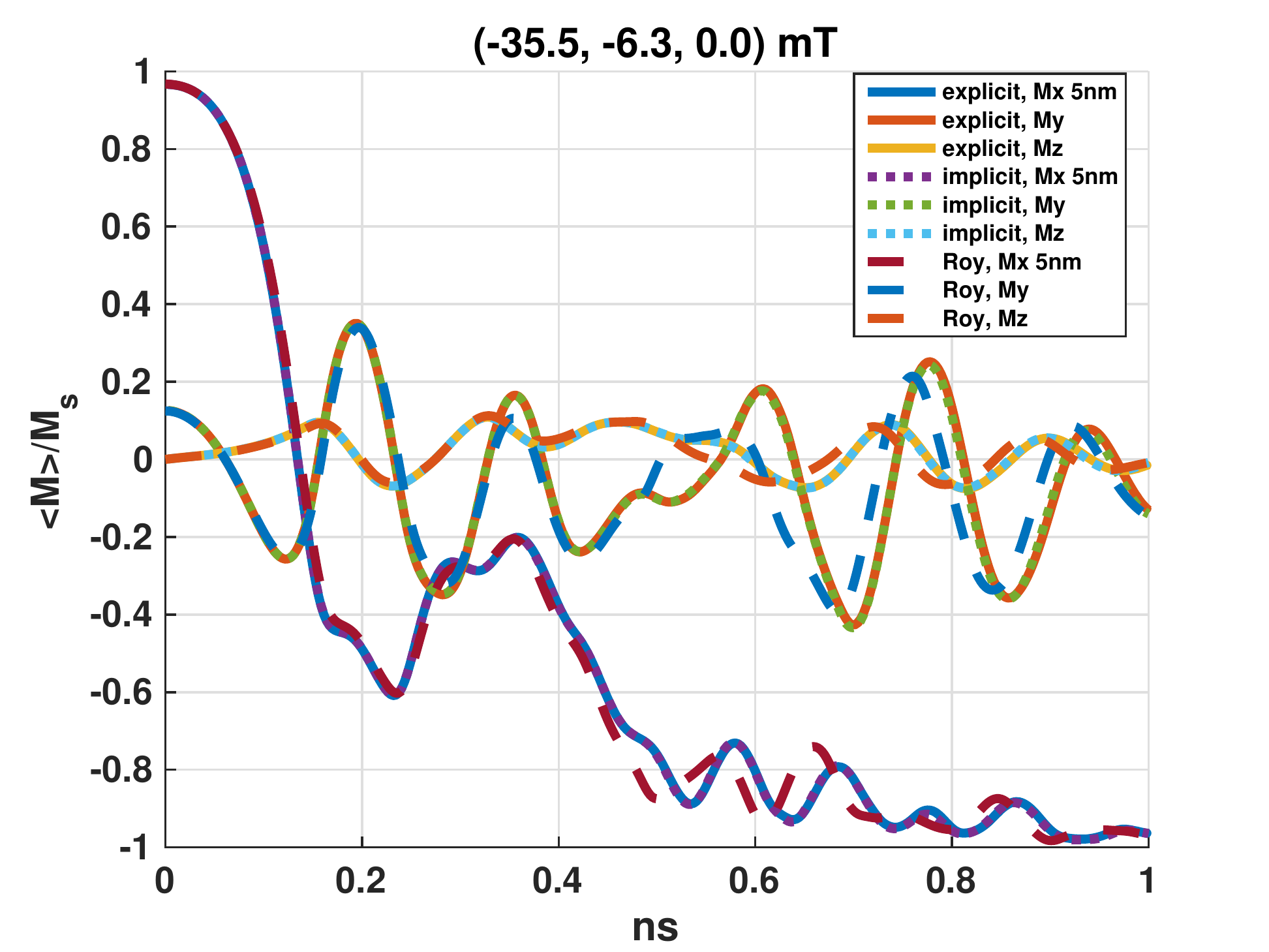}
  \includegraphics[width=.495\linewidth,trim=20 5 50 10,clip]{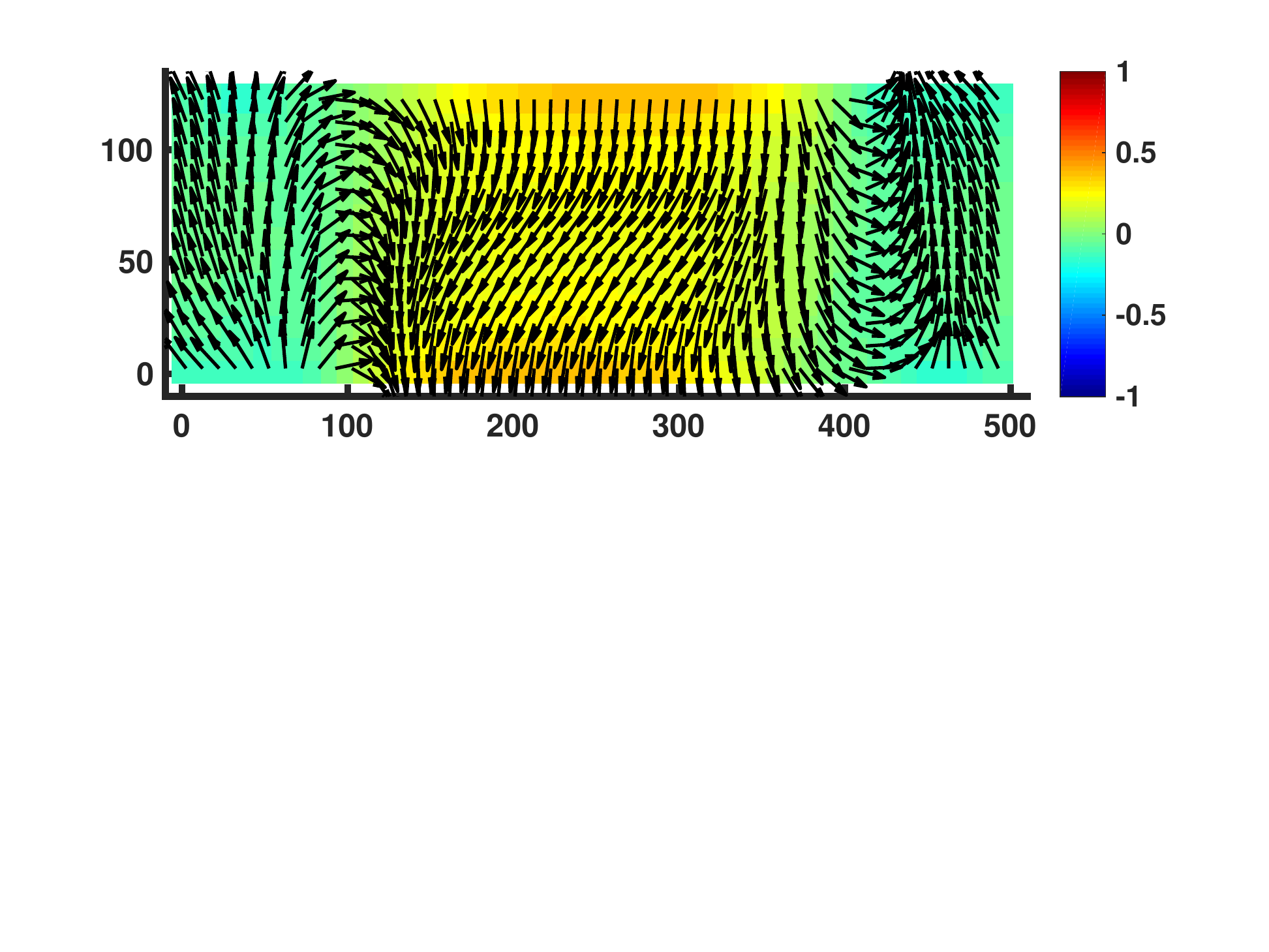}
  \caption{NIST 4th problem  with the external field $\mu_0\,\mb{H}_e = (-35.5,\, 6.3,\, 0.0)\,[mT]$.
Left panel compares the time evolution of the average magnetization calculated using the explicit and implicit mimetic schemes
with Roy and Svedlindh's results in \cite{nist4}. Right panel shows the magnetization field when $\left<m_x\right>$ first crosses zero. The vectors in the plot denote the $m_x$ and $m_y$ components, and the color denotes the $m_z$ component.}
%\caption{NIST 4th problem with the external field $\mu_0\,\mb{H}_e = (-35.5,\, 6.3,\, 0.0)\,[mT]$.}
\label{figure:field2}
\end{figure}

The time evolution of the magnetization is simulated 
using two different applied fields. 
The first field is $\mu_0 \mb{H}_e= (-24.6,\, 4.3,\, 0.0)\, [mT]$ which makes angle of
approximately $170$ degrees with the positive direction of the x-axis. 
The second field is $\mu_0 \mb{H}_e= (-35.5,\, 6.3,\, 0.0)\, [mT]$ which makes angles of
approximately $190$ degrees with the positive direction of the x-axis.

The evolution of the average magnetization 
$$
 \left<\mb{m}\right>= \frac{1}{N_E} \sum _{E\in \Omega_h} \mb{m}_E
$$ 
with the first field is shown in 
Fig.~\ref{figure:field1} and compared with the results obtained by Roy and Svedlindh in \cite{nist4}. 
They used a finite difference method (leading to the conventional 5-point approximation for the Laplacian)
and RK4 for the time stepping with time step $\hat k\approx11\text{ fs}$. 
Also, the magnetization field when $\left<m_x\right>$ first crosses zero
is shown on the right panel in Fig.~\ref{figure:field1}.  
The evolution of the magnetization is qualitatively in a very good agreement.

Consider now the second external field.
The evolution of the average magnetization and the magnetization field when $\left<m_x\right>$ first 
crosses zero is shown in Fig.~\ref{figure:field2}. 
As mentioned in \cite{nist4}, solutions obtained with different schemes begin to diverge approximately after $0.35\, ns$.
Note that we have a qualitatively good agreement until this time moment.

Furthermore, in Fig.~\ref{figure:nistk}, the evolution of the magnetization for both applied fields
with different time steps is plotted, which shows the stability and temporal convergence of the implicit scheme of Algorithm~1.

\begin{figure}[h]
  \includegraphics[width=.495\linewidth,trim=10 5 30 10,clip]{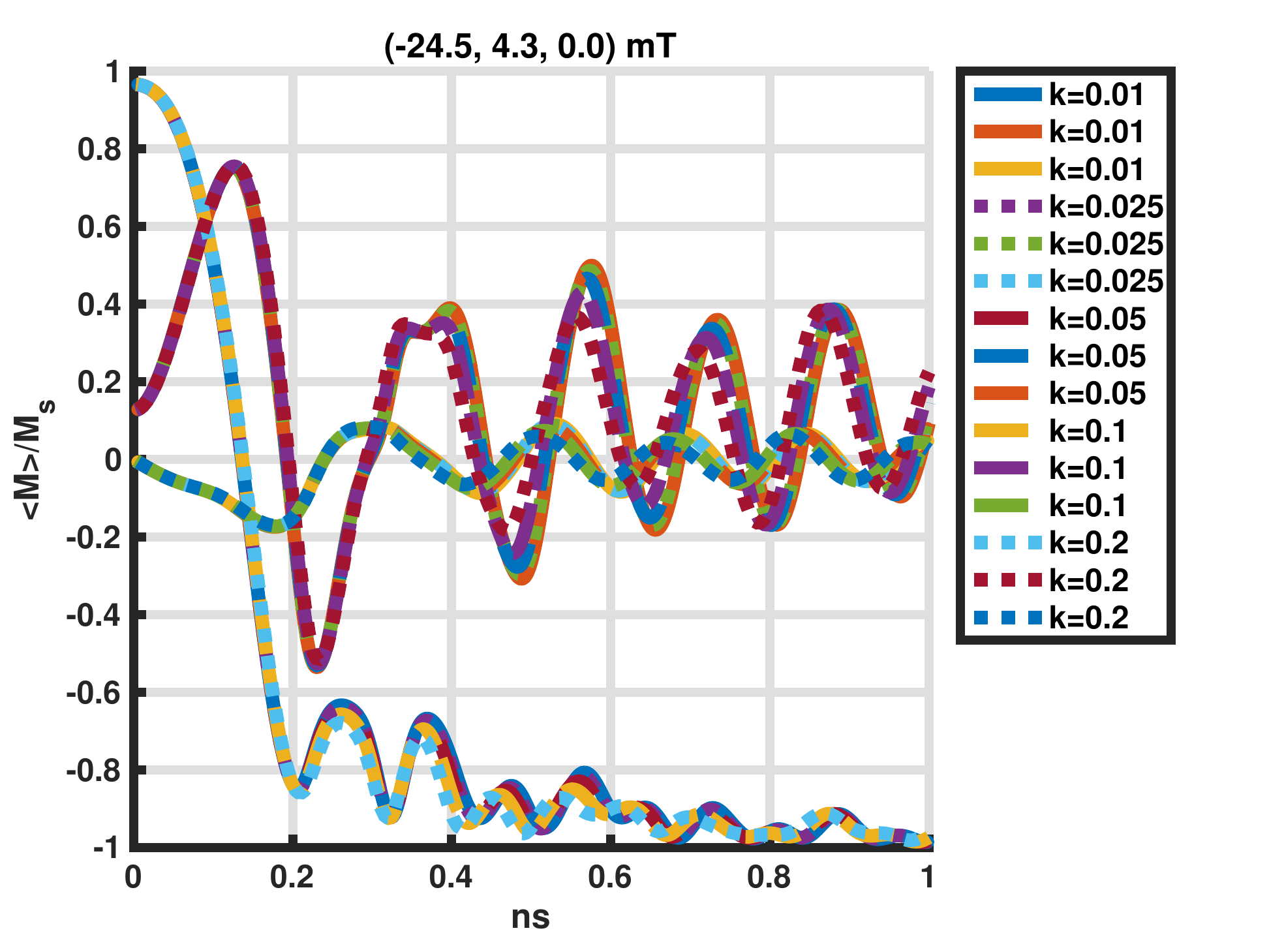}
  \includegraphics[width=.495\linewidth,trim=10 5 30 10,clip]{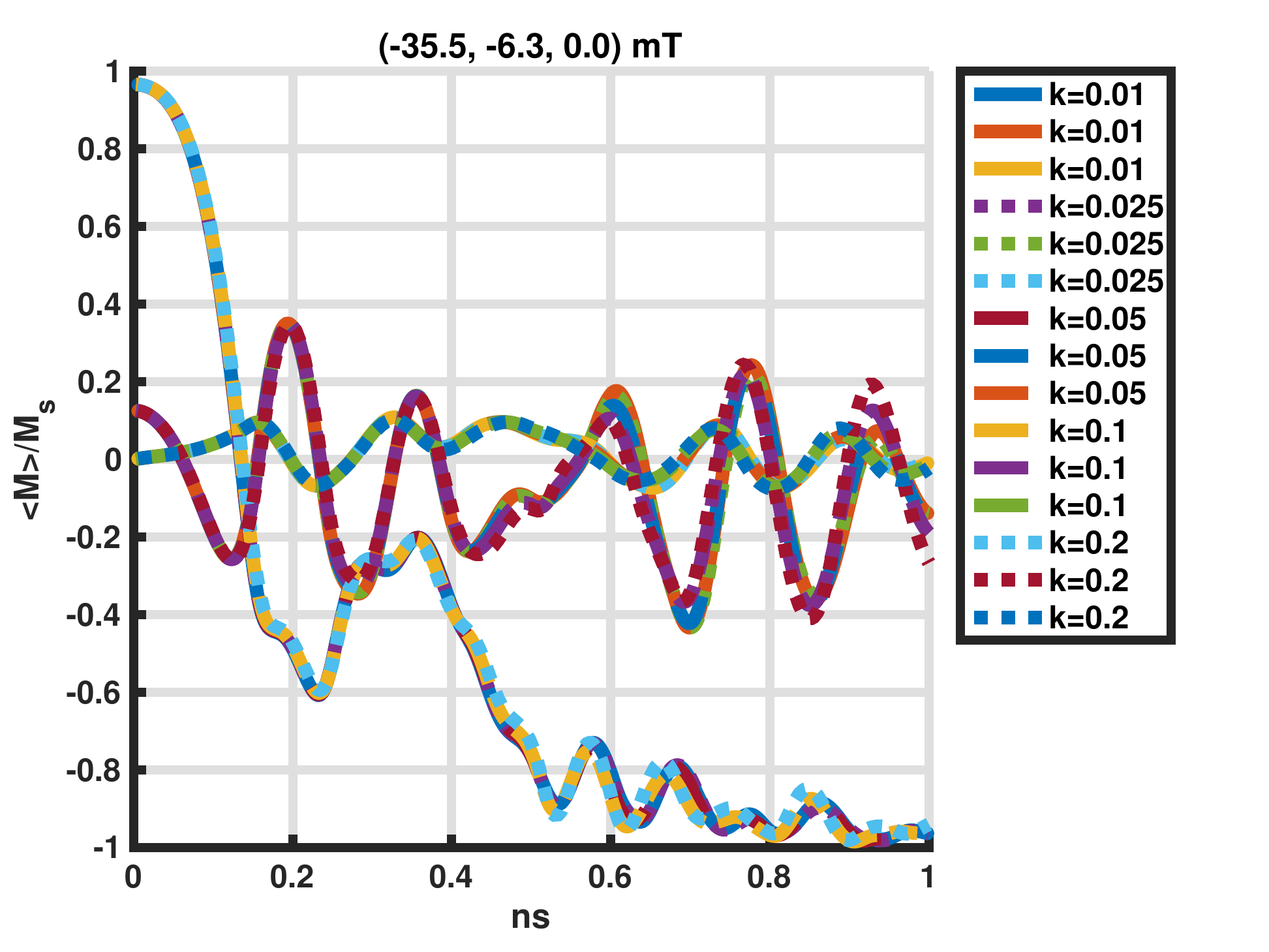}
\caption{NIST 4th problem: evolution of average magnetization computed using Algorithm 1 with 
various time steps $\hat k=\frac{k}{\gamma M_s}$  for applied fields $\mu_0\mb{H}_e = (-24.5,\, 4.3,\, 0.0)\,[mT]$ 
(left) and $\mu_0\mb{H}_e =(-35.5,\, 6.3,\, 0.0)\,[mT]$ (right).}
\label{figure:nistk}
\end{figure}

%%%%%%%%%%%%%%%%%%%%%%%%%%%%%%%%%%%%%%%%%%%%%%%%%%%%%%%%%%%%%%%%%%%%%%%%%%%%%%%%%%%%
\subsection{Domain wall structures in a thin film}
\label{subsec:domainwall}

In this subsection, we conduct numerical experiments of the domain wall structures 
in thin films with no external field using both explicit and implicit time discretization schemes.  
A similar numerical experiment was conducted in \cite{wang2006simulations} using the Gauss-Seidel 
projection method and gradually increasing the thickness of the film. 
Before its nondimensionalization, the LL equation  \cite{wang2006simulations} reads
\begin{equation}\label{eq:LLneel}
  \Frac{\partial \mb{M}}{\partial t'} 
  = -\gamma \mu_0 \mb{M} \times \mb{H} - \frac{\gamma \alpha\mu_0}{M_s} \mb{M} \times (\mb{M} \times \mb{H})
\end{equation}
subject to homogeneous Neumann boundary conditions and the initial condition described below.
The effective field $\mb{H}$ is given by \eqref{eq:num:H} but
most of the parameters described in that subsection have different values for this experiment.
We set the exchange constant $A = 2.1 \times 10^{-11}\, [J\cdot m^{-1}]$, 
saturation magnetization $M_s = 1.71 \times 10^6\, [A \cdot m^{-1}]$, 
gyromagnetic ratio $\gamma = 1.76 \times 10^{11}\, [T^{-1}\cdot s^{-1}]$, 
magnetic permeability of vacuum $\mu_0 = 4 \pi \times 10^{-7}\, [N \cdot A^{-2}]$, 
and the dimensionless damping parameter $\alpha = 0.02$. 

Using a slightly different rescaling than above, $\mb{H}= M_s \mb{h}$, $\mb{H}_s = M_s \mb{h}_s$, 
$\mb{H}_e = M_s \mb{h}_e$, $x=Lx'$ with $L= 10^{-9}$, and
$t= \frac{1}{\mu_0 \gamma M_s} t'$, we get equation (\ref{eq:LL}) with $\eta =\frac{A}{\mu_0 M_s^2 L^2}$
and equation (\ref{eq:LLenergy}) with $Q=0$. 
We consider a rectangular thin film of size $240 \text{nm} \times 480\text{nm} \times 7\text{nm}$.
Neglecting variation of the magnetization in the vertical direction, we 
solve the two-dimensional LL equation on a $64 \times 128$ mesh of square cells
with size $h_x =h_y= 3.75\text{nm}$.
For the explicit time integration scheme, we use time step 
$k=\frac{0.01}{\mu_0 \gamma  M_s}\approx 26.44 \text{ fs}$.
For the implicit time discretization scheme, we set $k=\frac{0.25}{\mu_0 \gamma  M_s}\approx 0.66 \text{ ps}$.
The initial state is a uniform N\'eel structure, with $\mb{m}=(0,1,0)$ for $0<x<120\text{nm}$ 
and $\mb{m}=(0,-1,0)$ for $120\text{nm}<x<240\text{nm}$ as shown on the left-top panel 
in Fig.~\ref{figure:neel}.  
The other panels show evolution of the magnetization.
We observe that there is a transition from the N\'eel wall to 
four $90^{\circ}$ N\'eel walls connecting a vortex which is the equilibrium state. 
Note that we plotted the magnetizations on a coarser grid in Fig.~\ref{figure:neel} 
for better visualization.
 
\begin{figure}[h]
  \includegraphics[width=.495\linewidth,trim=150 20 100 15,clip]{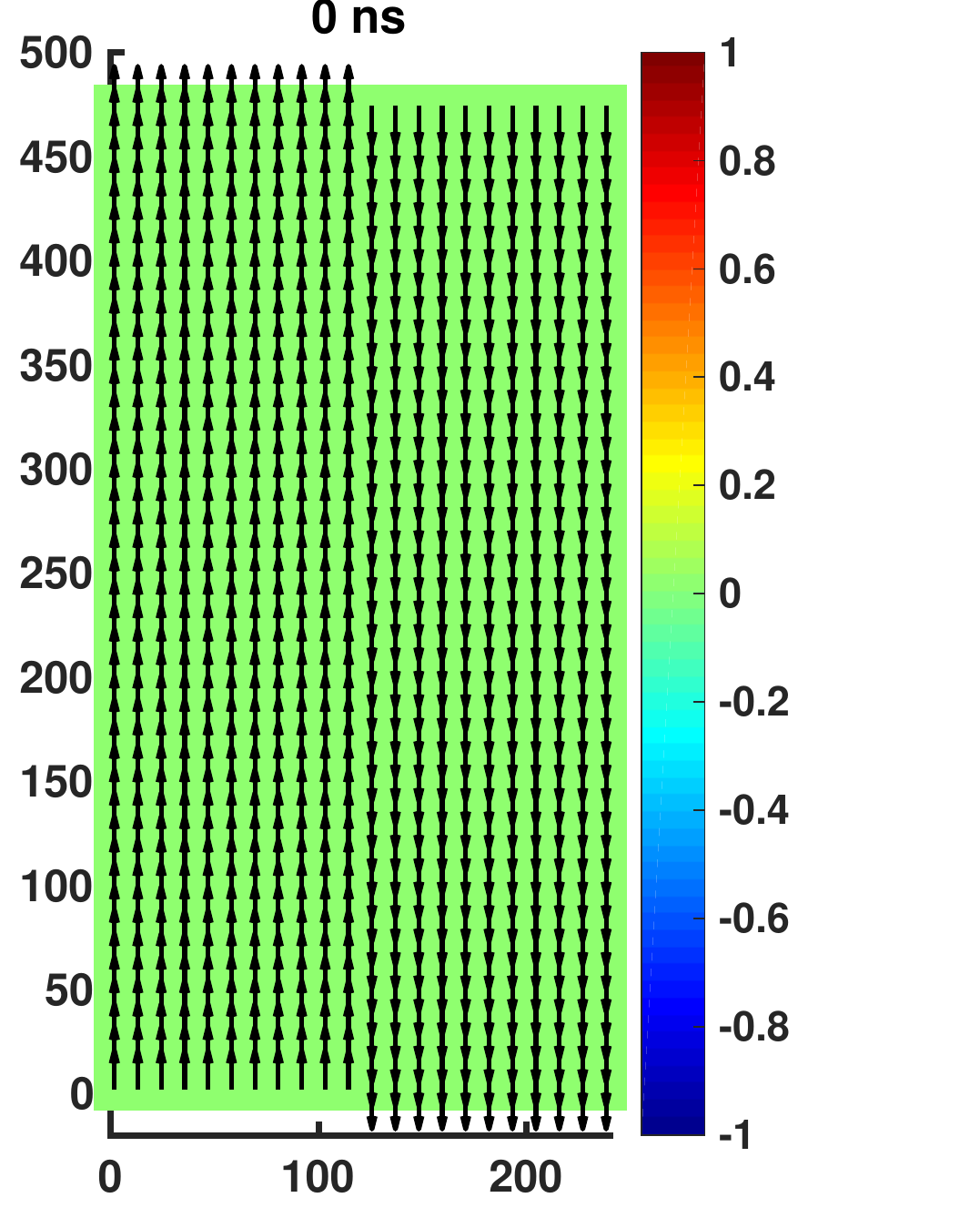}
  \includegraphics[width=.495\linewidth,trim=150 20 100 15,clip]{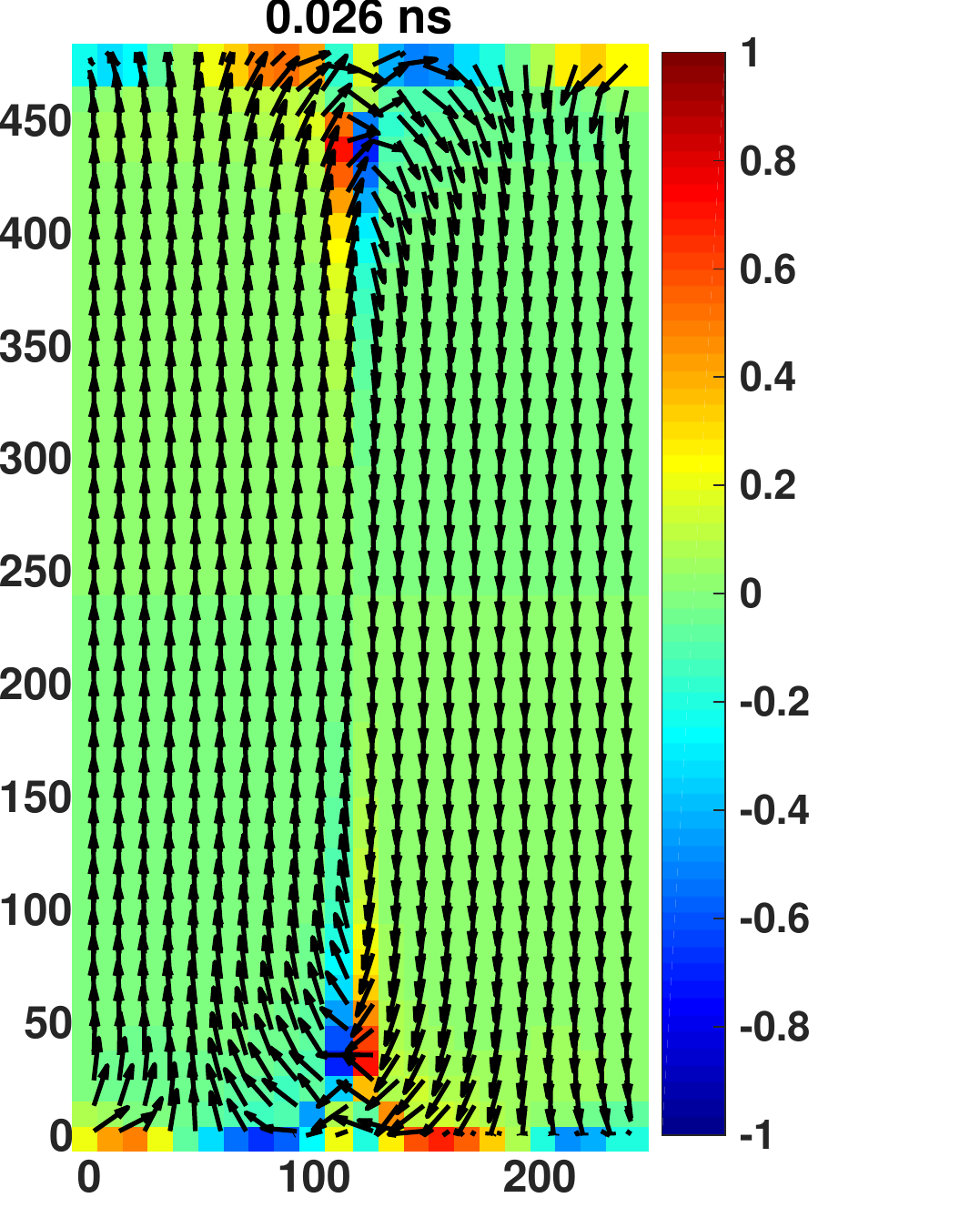}
  \includegraphics[width=.495\linewidth,trim=150 20 100 15,clip]{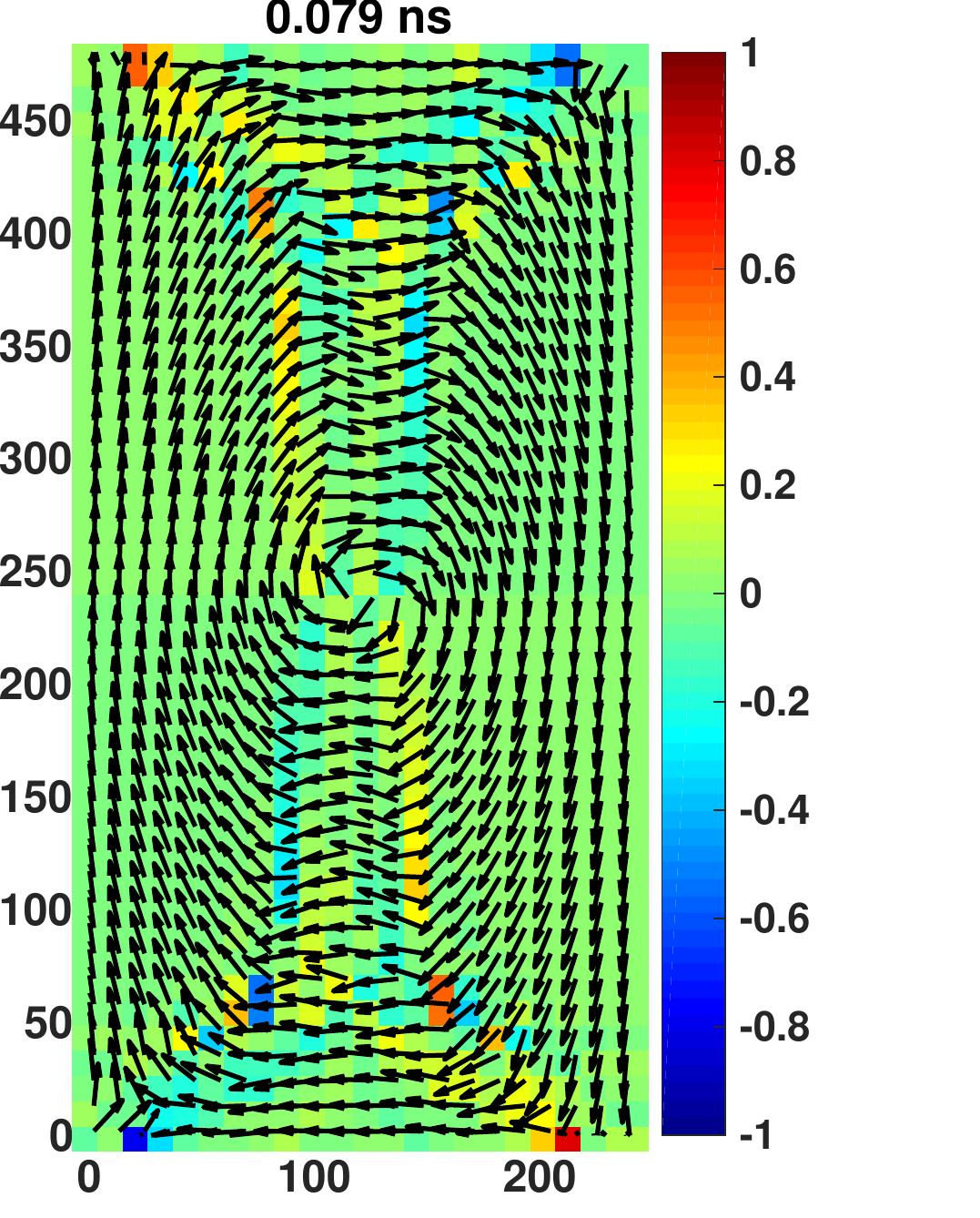}
  \includegraphics[width=.495\linewidth,trim=150 20 100 15,clip]{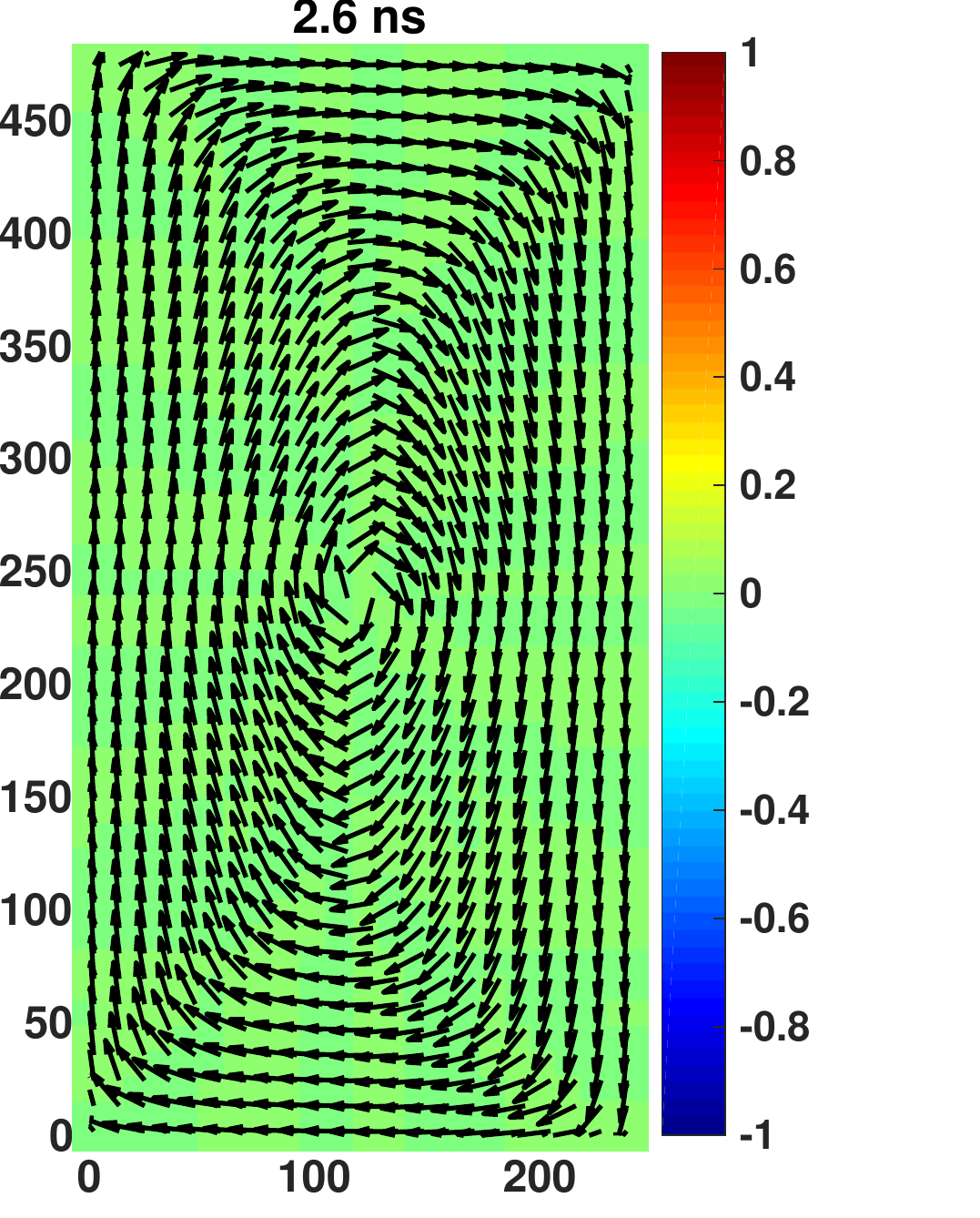}
\caption{Transition from the N\'eel wall to vortex structure. The vectors in the plot denote the $m_x$ and $m_y$ components, and the color denotes the $m_z$ component.}
\label{figure:neel}
\end{figure}

For each time step, the computational complexity involves one stray field computation 
and one linear solver accelerated with an algebraic multigrid preconditioner.
The approximate cost for the stray field computation is $O(N\log N)$ and algrebraic multigrid is $O(N)$ 
using the software library Hypre \cite{falgout2002hypre}, where $N$ is the number of degrees of freedom.
More precisely, the cost for the stray field computation in a thin film is about $8$ times the cost of a $2$D FFT.
The cost for the algebraic multigrid method is about $(\text{complexity}+1)\times \# itrs \times 216N$,
where $\# itrs$ is about $9$ and complexity is about $1.67$ in our case.

%%%%%%%%%%%%%%%%%%%%%%%%%%%%%%%%%%%%%%%%%%%%%%%%%%%%%%%%%%%%%%%%%%%%%%%%%%%%%%%%%%%%
\subsection{Adaptive mesh refinement}
\label{subsec:amr}
Dynamics of domain walls shows a strong need for adaptive meshes.
In this subsection, we compare performance of the MFD method on uniform and 
locally refined meshes with prescribed structure.
Development of a true adaptive algorithm is beyond the scope of this paper.
We consider the 1D steady-state solution with the Dirichlet boundary conditions:
\begin{equation}\label{eq:steady}
\begin{aligned}
  &m_x(x_1,x_2,t) =\sin (  \phi(x_1,x_2,t) ) \\
  &m_y(x_1,x_2,t) =\cos (  \phi(x_1,x_2,t) ) \\
  &m_z(x_1,x_2,t) = 0,
\end{aligned}
\qquad \phi(x_1,x_2,t) = \pi \left(1+e^{-s \pi (x_1-b/2)}\right)^{-1},
\end{equation}
where $b=1$ and $s=20$.
This is a steady-state solution of the LL equation (\ref{eq:LL})-(\ref{eq:LLenergy}) with 
the external field 
\begin{equation}\label{eq:hext}
\begin{aligned}
  &(\mb{h_e})_x(x_1,x_2,t) =\left(\der{\phi(x_1,x_2,t)}{x}\right)^2 \sin (  \phi(x_1,x_2,t) ) - \dder{\phi(x_1,x_2,t)}{x} \cos (  \phi(x_1,x_2,t) ) \\
  &(\mb{h_e})_y(x_1,x_2,t) =\left(\der{\phi(x_1,x_2,t)}{x}\right)^2\cos (  \phi(x_1,x_2,t) ) + \dder{\phi(x_1,x_2,t)}{x} \sin (  \phi(x_1,x_2,t) ) \\
  &(\mb{h_e})_z(x_1,x_2,t) =0, 
\end{aligned}
\end{equation}
$Q=0$ and $\mb{h}_s=0$.
This solution has a sharp transition on interval $0.4<x_1<0.6$
and is almost constant on the other regions as shown on the left-top panel in Fig~\ref{figure:amr}.
The locally refined meshes are polygonal meshes of squares and degenerate pentagons.
They are shown on the remaining panels in Fig~\ref{figure:amr}. 

\begin{figure}[h]
  \includegraphics[width=.495\linewidth,trim=5 0 40 30,clip]{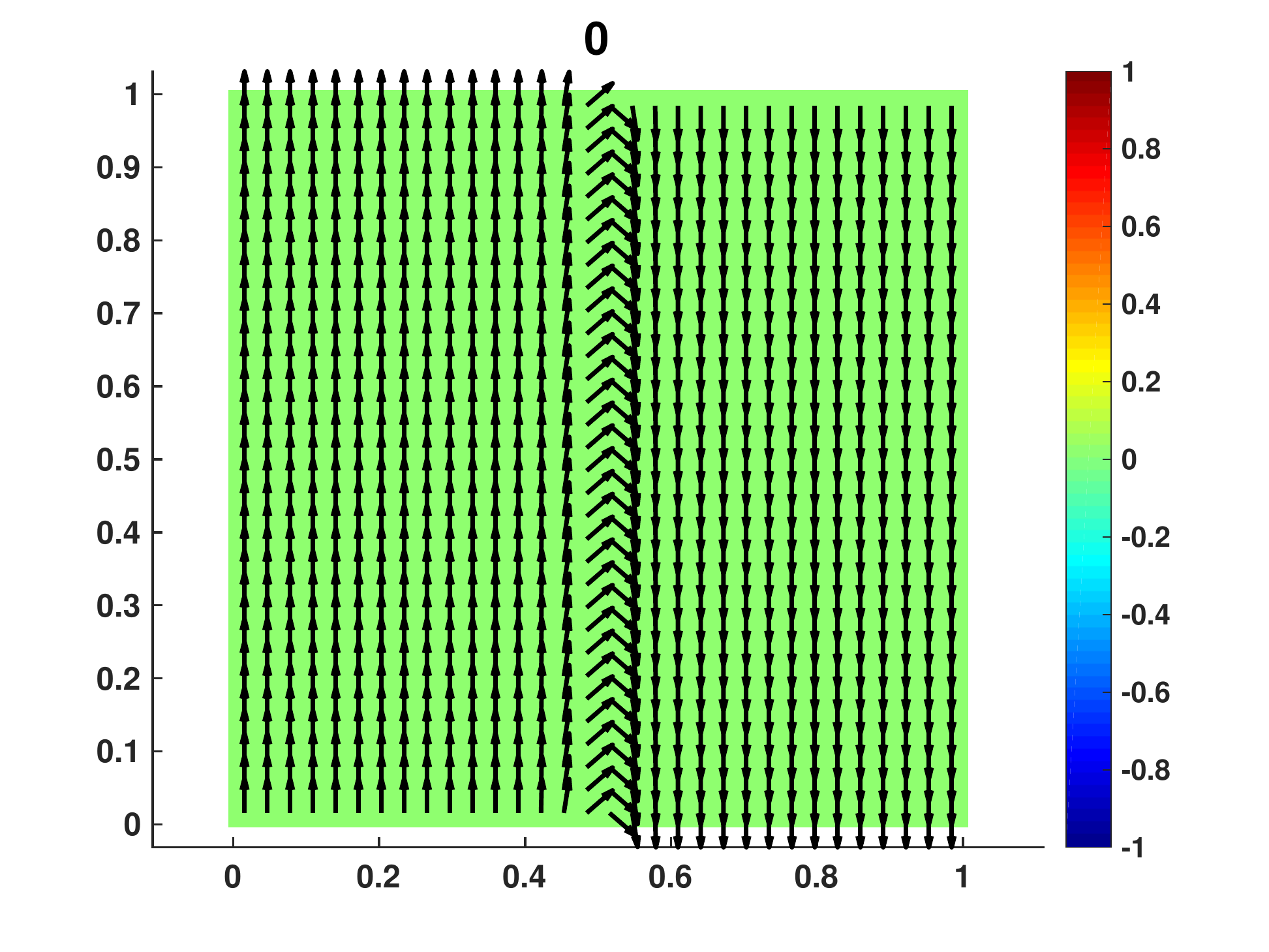}
  \includegraphics[width=.495\linewidth,trim=5 0 40 20,clip]{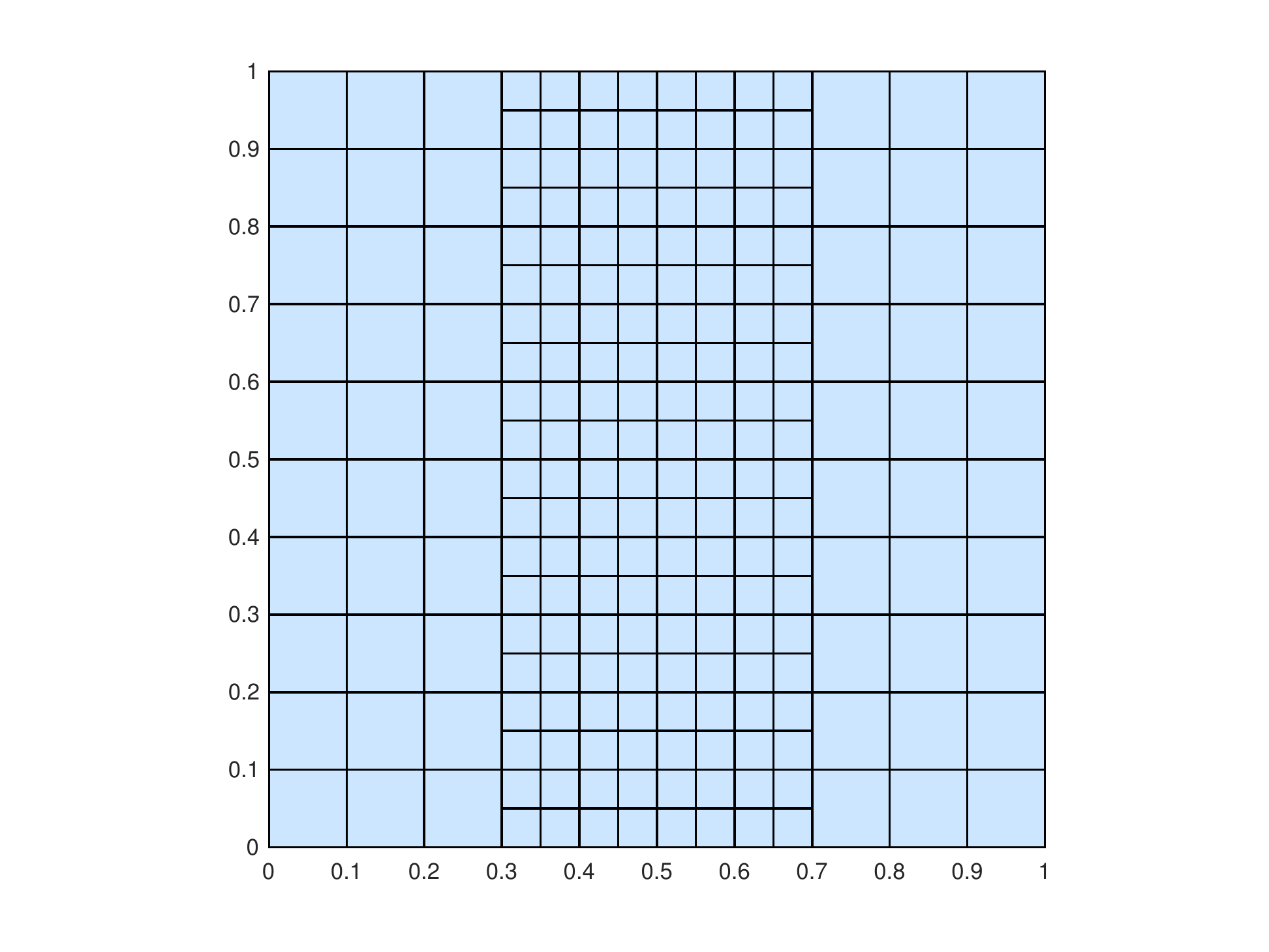}\\
  \includegraphics[width=.495\linewidth,trim=5 0 40 20,clip]{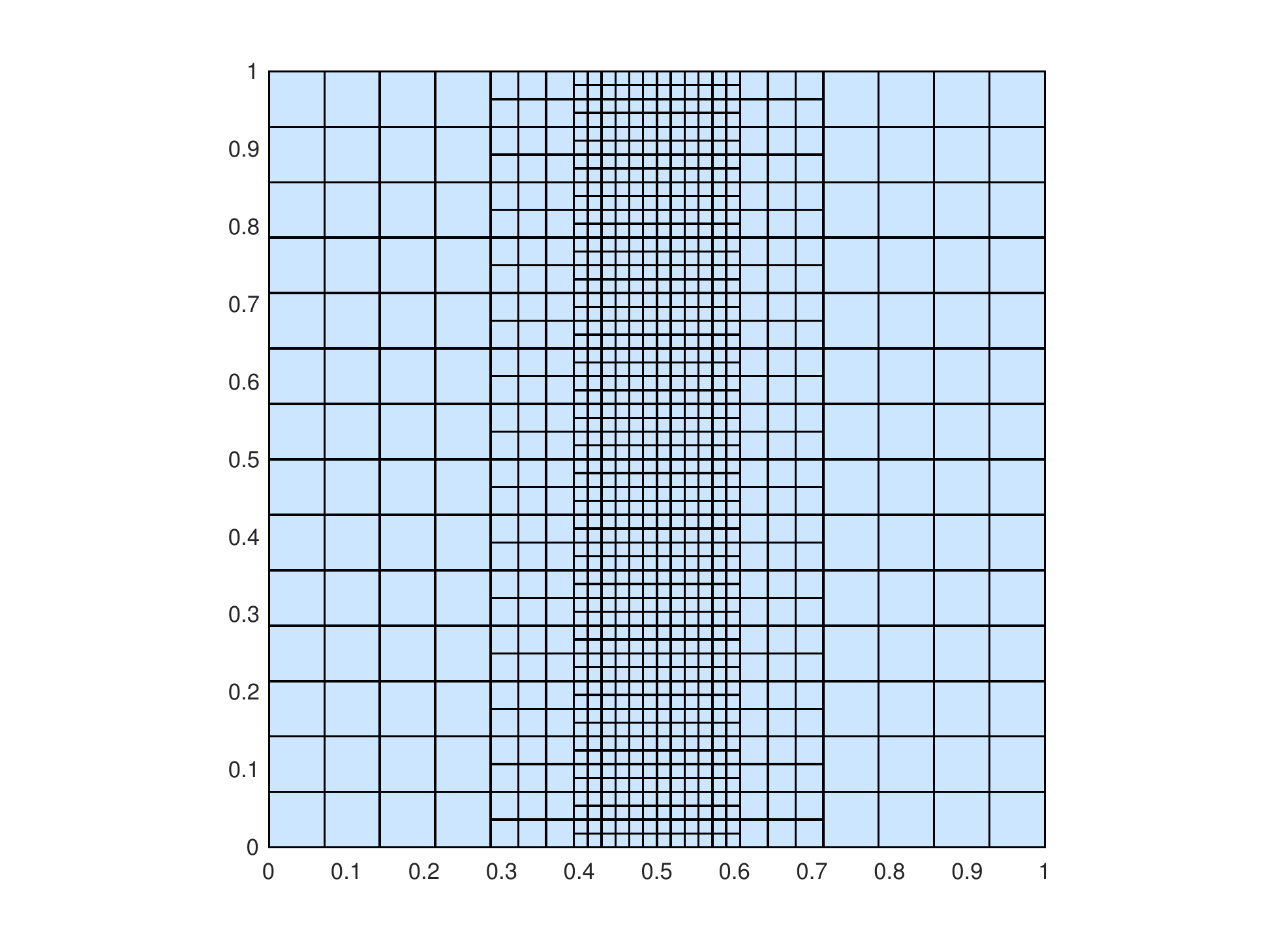}
  \includegraphics[width=.495\linewidth,trim=5 0 40 20,clip]{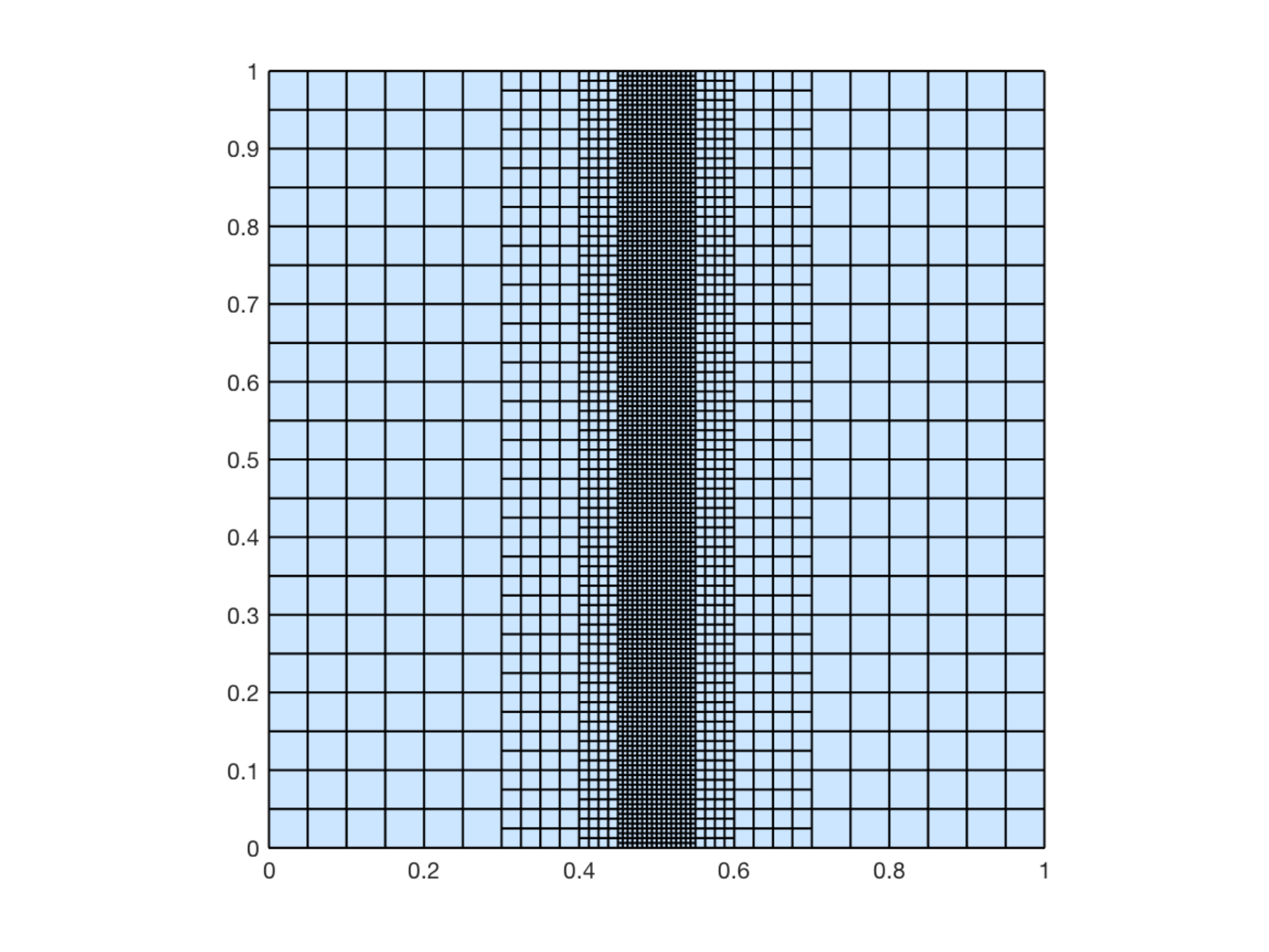}
  \caption{Steady state solution on a uniform mesh and three first locally refined meshes.}
  \label{figure:amr}
\end{figure}

The convergence results are summarized in Table~\ref{table:amr}.
For about the same numerical cost, the locally refined meshes lead to much more accurate simulations.
We expect even better behavior for adaptive meshes built using an error indicator.

\begin{table}[h]
\centering
\begin{tabular}{c|c|c|c|c}
\multicolumn{5}{c}{Uniform square meshes} \\
\hline
Number of cells &  $\| \mb{m}^h - \mb{m}^I\|_{L^\infty}$ & ratio & $\| \mb{m}^h - \mb{m}^I\|_{\cal Q}$  & ratio \\
\hline
256 &	9.170e-01 &	1.51  &	3.429e-01 &	1.67 \\
1024 &	3.231e-01 &	2.67 &	1.081e-01 &	2.81 \\
4096 &	5.072e-02 &	2.09  &	1.542e-02 &	2.10 \\
16384 &	1.192e-02 &	 &	3.605e-03 &	 \\
\hline
\multicolumn{5}{c}{Adaptive mesh} \\
\hline
220 &	8.993e-01 &	3.52 &	2.906e-01 &	3.60 \\
952 &	6.846e-02 &	3.30 &	2.076e-02 &	3.36 \\
3760 &	7.099e-03 &	1.39 &	2.062e-03 &	1.99 \\
15904 &	2.609e-03 &	 &	4.915e-04 &	 \\
%------------------------------------------------------
%256 &	9.170e-01 &	1.51 &	3.429e-01 &	1.67 \\
%1024 &	3.231e-01 &	2.67 &	1.081e-01 &	2.81 \\
%4096 &	5.072e-02 &	2.09 &	1.542e-02 &	2.10 \\
%16384 &	1.192e-02 &	2.09 &	3.605e-03 &	2.10 \\
%220 &	8.993e-01 &	3.52 &	2.906e-01 &	3.60 \\
%952 &	6.846e-02 &	3.30 &	2.076e-02 &	3.36 \\
%3760 &	7.099e-03 &	1.39 &	2.062e-03 &	1.99 \\
%15904 &	2.609e-03 &	1.39 &	4.915e-04 &	1.99 \\
%   \multicolumn{5}{c||}{Uniform mesh} & \multicolumn{5}{|c}{Adaptive Mesh}\\  
%   \hline
%\# of cells   &  $L^\infty$ error  & ratio &  $L^2$ error    & ratio &   \# of cells  &  $L^\infty$ error   & ratio    &  $L^2$ error   & ratio \\
%  \hline
%256 &	9.170e-01 &	0.75 &	3.429e-01 &	0.83 &	220 &	8.993e-01 &	1.76 &	2.906e-01 &	1.80\\
%1024 &	3.231e-01 &	1.34 &	1.081e-01 &	1.40 &	952 &	6.846e-02 &	1.65 &	2.076e-02 &	1.68\\
%4096 &	5.072e-02 &	1.04 &	1.542e-02 &	1.05 &	3760 &	7.099e-03 &	0.69 &	2.062e-03 &	0.99\\
%16384 &	1.192e-02 &	1.04 &	3.605e-03 &	1.05 &	15904 &	2.609e-03 &	0.69 &	4.915e-04 &	0.99\\
\end{tabular}
\caption{Comparison of errors between uniform and locally refined meshes, see Fig.~\ref{figure:amr}.}
\label{table:amr}
\end{table}

\section*{Acknowledgements}

This work was carried out under the auspices of the National Nuclear Security Administration
of the U.S. Department of Energy at Los Alamos National Laboratory under Contract No.
DE-AC52-06NA25396.
The first author was supported by the U.S. Department of Energy, Office of Science, 
Office of Workforce Development for Teachers and Scientists, Office of Science Graduate Student Research (SCGSR) program. 
The SCGSR program is administered by the Oak Ridge Institute for Science and Education for the DOE under contract number DE-AC05-06OR23100.
The second author acknowledges the support of the U.S. Department of Energy Office of Science
Advanced Scientific Computing Research (ASCR) Program in Applied Mathematics Research.

The authors thank Pieter Swart for useful comments that shaped our selection of numerical experiments.

\clearpage
\bibliographystyle{abbrv} %elsarticle-num}

\section*{References}
\bibliography{refs}

\end{document}